\newtheorem{theorem}{Theorem}[section]
\newtheorem{lemma}[theorem]{Lemma}
\newtheorem{proposition}[theorem]{Proposition}
\newtheorem{corollary}[theorem]{Corollary}
\newtheorem{remark}[theorem]{Remark}
\newtheorem{conjecture}{Conjecture}
\numberwithin{equation}{section}
\numberwithin{figure}{section}
\def\intave#1{\int_{#1}\hbox{\llap{$\raise2.3pt\hbox{\vrule
height.9pt width7pt}\phantom{\scriptstyle{#1}}\mkern-2mu$}}}
\title{On Laplacian eigenvalue equation with constant Neumann boundary data}
\author{Yong Huang}
\address{School of Mathematics, Hunan University, Changsha, Hunan, China.}
\email{huangyong@hnu.edu.cn}
\author{Qinfeng Li
}
\address{School of Mathematics, Hunan University, Changsha, Hunan, China.}
\email{liqinfeng@hnu.edu.cn}
\author{Qiuqi Li }
\address{School of Mathematics, Hunan University, Changsha, Hunan, China.}
\email{qli28@hnu.edu.cn}
\author{Ruofei Yao}
\address{School of Mathematics, South China University of Technology, Guangzhou, Guangdong, China.}
\email{yaoruofei@scut.edu.cn}
\thanks{Research of Yong Huang was supported by the National Science Fund (12171144, 12231006). Research of Qinfeng Li was supported by National Key R\&D Program of China (2022YFA1006900) and the National Natural Science Fund for Youth Scholars (No. 12101215). Research of Qiuqi Li was supported by the National Natural Science Fund for Youth Scholars (No. 12101216). Research of Ruofei yao was supported by the National Natural Science Fund for Youth Scholars (No. 12001543).}
\begin{document}

\begin{abstract}
Let $\Omega$ be a bounded Lipschitz domain in $\mathbb{R}^n$ and we study boundary behaviors of solutions to the following Laplacian eigenvalue equation with constant Neumann data:
 \begin{align}
    \label{cequation0}
\begin{cases}
-\Delta u=cu\quad &\mbox{in $\Omega$}\\
\frac{\partial u}{\partial \nu}=-1\quad &\mbox{on $\partial \Omega$}.
\end{cases}    
\end{align}First, by using properties of Bessel functions and proving new inequalities on elementary symmetric polynomials, we obtain the following inequality for rectangular boxes, balls and equilateral triangles:
\begin{align}
\label{bbb}
    \lim_{c\rightarrow \mu_2^-}c\int_{\partial \Omega}u_c\, \rm{d} \sigma\ge \frac{n-1}{n}\frac{P^2(\Omega)}{|\Omega|},
\end{align}with equality achieved only at cubes and balls. In the above, $u_c$ is the solution to \eqref{cequation0} and $\mu_2$ is the second Neumann Laplacian eigenvalue. Second, let $\kappa_1$ be the best constant for the Poincar\'e inequality with the vanishing mean condition over $\partial \Omega$, and we prove that $\kappa_1\le \mu_2$ and that the equality holds if and only if $\int_{\partial \Omega}u_c\, \rm{d} \sigma>0$ for any $c\in (0,\mu_2)$. As a consequence, $\kappa_1=\mu_2$ on balls, rectangular boxes and equilateral triangles, and balls maximize $\kappa_1$ over all Lipschitz domains with fixed volume. As an application, we extend the symmetry breaking results from ball domains obtained in  Bucur-Buttazzo-Nitsch[J. Math. Pures Appl. (9) 107 (2017), no. 4, 451–463.], to wider class of domains, and give quantitative estimates for the precise breaking threshold at balls and rectangular boxes. It is a direct consequence that for domains with $\kappa_1<\mu_2$, \eqref{bbb} is never true, while whether it is valid for domains on which $\kappa_1=\mu_2$ remains open.
\end{abstract}

\maketitle

\vskip0.2in

{\bf Key words}: Spectral Theory, Bessel Fucntions, Isoperimetric Inequality, Neumann Laplacian

{\bf 2020 AMS subject classification}: 49K20, 49K40, 49R05, 53E99, 34B30

\section{Introduction} 
In this paper, our focus is initially driven by an examination of the following shape functional on a bounded Lipschitz domain $\Omega$ in $\mathbb{R}^n$:
\begin{align}
\label{defofkappa}
    \kappa_1(\Omega)=\inf\Big\{\frac{\int_{\Omega}|\nabla u|^2\, dx}{\int_{\Omega}u^2\, dx}: u \in H^1(\Omega)\setminus\{0\},\, \int_{\partial \Omega}u\,\rm{d} \sigma=0\Big\}.
\end{align} The quantity $\kappa_1=\kappa_1(\Omega)$ is the best constant for the Poincar\'e type inequality with a vanishing mean over $\partial \Omega$.  It is noteworthy that minimizing the Rayleigh quotient in \eqref{defofkappa} under the zero boundary condition corresponds to determining the first eigenvalue of the Dirichlet Laplacian, denoted by $ \lambda_1$. Similarly, under the vanishing mean condition over $\Omega$, it corresponds to finding the second eigenvalue of the Neumann Laplacian, denoted by $\mu_2$, and similar problems are considered in \cite{BP00}, \cite{FH04}, \cite{GL08}, etc. For comprehensive coverage of Laplacian eigenvalues with various boundary conditions, we refer to the classical monograph \cite{PS} and two recent exhaustive monographs \cite{Henrot2} and \cite{nshap} and references therein. Regarding Poincar\'e type inequalities, we refer to the classical books \cite{AH} and \cite{M}, and for applications in shape optimization we refer to the monographs \cite{BB} and \cite{Henrot}.

It seems to us that very few literature addresses \eqref{defofkappa}, and so far the exact constant $\kappa_1$ is only obtained for rectangular boxes, see \cite{NP15}. Determining the value of $\kappa_1$ and comparing it with $\mu_2$ on generic domains is important in its own right, as it relates to the best constant in a Poincar\'e type inequality. Furthermore, the concept of $\kappa_1$ plays a crucial role in the perturbation argument for obtaining the symmetry breaking result proved in \cite{BBN} on a thin thermal insulation problem, see also the related works in \cite{BBN1}, \cite{Buttazzo}, \cite{PNST}, \cite{HLL}, \cite{HLL2} and \cite{PNT}. From these motivations, we began our investigation of the shape functional defined in \eqref{defofkappa}.

It turns out that it is more essential to study the following Laplacian eigenvalue equation
\begin{align}
    \label{cequation}
\begin{cases}
-\Delta u=cu\quad &\mbox{in $\Omega$}\\
\frac{\partial u}{\partial \nu}=-1\quad &\mbox{on $\partial \Omega$},
\end{cases}    
\end{align}where $c$ is a positive constant. On one hand, \eqref{cequation} is closely associated with $\kappa_1$, and by delving into the study of \eqref{cequation}, we can characterize exact values of $\kappa_1$ and compare it with $\mu_2$. On the other hand, there may be a novel geometric inequality underlying \eqref{cequation} that links $\kappa_1$, $\mu_2$ and the isoperimetric ratio for specific domain categories, and this provides a new perspective in understanding the symmetry breaking phenomenon over balls proved in \cite{BBN}. It can also be applied to prove similar phenomenon over more general domains, obtain quantitative results and extend previous researches in \cite{BBN}, \cite{HLL} and \cite{HLL2}. These will be further explained in the later part of the paper.

Hence, from now on, the present paper will focus on \eqref{cequation}, and properties of $\kappa_1$ will be derived as consequences. As a first step, we should state the existence result of solutions to \eqref{cequation} for clarity. Let $0=\mu_1<\mu_2\le \cdots \le \mu_k\rightarrow \infty$ be the Neumann Laplacian eigenvalues on $\Omega$ enumerated in an increasing order and repeated according to their multiplicity. It is well known that if $c$ does not belong to any of the eigenvalues $\mu_k$, then \eqref{cequation} admits a unique weak solution in $H^1(\Omega)$. Concerning singular cases, we have the following theorem.
\begin{theorem}
\label{existence}
Let $\mu_k$ be the $k$-th eigenvalue of Neumann Laplacian. Then when $c=\mu_k$, $k \ge 2$, \eqref{cequation} admits a weak solution in $H^1(\Omega)$ if and only if $\Omega$ satisfies the following property:
\begin{align}
    \label{omegaproperty}
\int_{\partial \Omega} v_k \, \rm{d} \sigma=0,\quad \mbox{for every $v_k \in E_k(\Omega)$,}    
\end{align}
where $E_k(\Omega)$ stands for the Neumann Laplacian eigenspace of $\mu_k$ on $\Omega$.
\end{theorem}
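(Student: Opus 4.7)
The plan is to recast (\ref{cequation}) as an operator equation on $H^1(\Omega)$ and invoke the Fredholm alternative. For the \emph{only if} direction, suppose $u \in H^1(\Omega)$ is a weak solution, so that
\begin{equation*}
\int_\Omega \nabla u \cdot \nabla \varphi \, dx - c \int_\Omega u \varphi \, dx = -\int_{\partial \Omega} \varphi \, d\sigma \quad \text{for every } \varphi \in H^1(\Omega).
\end{equation*}
Testing with $\varphi = v_k$, a Neumann eigenfunction of $\mu_k = c$, and comparing against the weak eigenfunction identity $\int_\Omega \nabla v_k \cdot \nabla u \, dx = c \int_\Omega v_k u \, dx$, the two left-hand sides cancel and one obtains $\int_{\partial \Omega} v_k \, d\sigma = 0$.

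For the \emph{if} direction, I would introduce the coercive bilinear form $a(u, \varphi) := \int_\Omega (\nabla u \cdot \nabla \varphi + u \varphi) \, dx$ on $H^1(\Omega)$. By the Riesz representation theorem there is a bounded operator $K \colon H^1(\Omega) \to H^1(\Omega)$ with $a(Ku, \varphi) = \int_\Omega u \varphi \, dx$, and an element $g \in H^1(\Omega)$ with $a(g, \varphi) = \int_{\partial \Omega} \varphi \, d\sigma$; the latter is well-defined because the trace is bounded from $H^1(\Omega)$ into $L^2(\partial \Omega)$ for Lipschitz $\Omega$. The compact embedding $H^1(\Omega) \hookrightarrow L^2(\Omega)$ makes $K$ compact, and $K$ is self-adjoint with respect to $a$. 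The weak form of (\ref{cequation}) is then equivalent to
\begin{equation*}
(I - (c+1)K)u = -g,
\end{equation*}
and one checks that $v \in \ker(I - (c+1)K)$ exactly when $v$ is a Neumann eigenfunction with eigenvalue $c = \mu_k$.

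Now the Fredholm alternative, applicable since $(c+1)K$ is compact and self-adjoint on $(H^1(\Omega), a)$, asserts that a solution exists precisely when $-g$ is $a$-orthogonal to $\ker(I - (c+1)K)$, which amounts to $a(-g, v_k) = -\int_{\partial \Omega} v_k \, d\sigma = 0$ for every eigenfunction $v_k$ of $\mu_k$. This is exactly (\ref{omegaproperty}). The argument is mostly routine functional analysis; the only genuinely domain-dependent inputs are the compactness of the Sobolev embedding and the boundedness of the trace operator on $H^1(\Omega)$, both of which hold on bounded Lipschitz $\Omega$, so no serious obstacle is anticipated.
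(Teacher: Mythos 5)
Your proposal is correct. The necessity direction is the same as the paper's (testing the weak formulation against an eigenfunction is just the weak form of the second Green identity they use). The sufficiency direction, however, takes a genuinely more direct route. The paper first constructs, via Riesz representation, an auxiliary function $f$ with $-\Delta f=P(\Omega)/|\Omega|$ and $\partial f/\partial\nu=-1$, subtracts it to reduce \eqref{cequation} to a nonhomogeneous eigenvalue equation with \emph{homogeneous} Neumann data, and then invokes a separately stated Fredholm alternative for Neumann conditions (their Theorem \ref{Fred}), which requires a further Green-identity computation to check that $f\perp v_k$ in $L^2$. You instead absorb the boundary functional $\varphi\mapsto\int_{\partial\Omega}\varphi\,d\sigma$ directly into the right-hand side $g$ of the operator equation $(I-(c+1)K)u=-g$ on $(H^1(\Omega),a)$ and apply the abstract Fredholm alternative for the compact self-adjoint operator $K$; the solvability condition $a(g,v_k)=\int_{\partial\Omega}v_k\,d\sigma=0$ then \emph{is} \eqref{omegaproperty} with no intermediate orthogonality check. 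Your identifications are all sound: $a(Ku,\varphi)=\int_\Omega u\varphi\,dx$ is symmetric, so $K$ is $a$-self-adjoint and compact by Rellich, and $\ker(I-(c+1)K)$ is exactly the $\mu_k$-eigenspace. The trade-off is that your argument is more self-contained and shorter, while the paper's two-step reduction isolates a reusable statement (the Neumann Fredholm alternative) that they cite for lack of a reference in the literature.
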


The proof of Theorem \ref{existence} is quite elementary, but what makes the result interesting is that it tells us that the existence of solutions on symmetric domains is partially related to anti-symmetry of Neumann Laplacian modes. For example, when $c=\mu_2$, \eqref{cequation} is solvable when $\Omega$ is a convex domain with two axes of symmetry, or $\Omega$ is an isosceles triangle with aperture greater than $\pi/3$, since it is known due to \cite{Friedlander} and \cite{LS10} that in these cases any Neumann eigenfunctions of $\mu_2$ must be antisymmetric with respect to an axis. Note that for equilateral triangles, even if Neumann Laplacian eigenfunctions of $\mu_2$ contain symmetric modes, the symmetric modes also satisfy \eqref{omegaproperty}, and hence \eqref{cequation} also admits a solution when $c=\mu_2$. 

Given $k\ge 2$, one may ask what the common features of domains are such that \eqref{omegaproperty} is satisfied. This is a very difficult question to answer even for special domains and $k=2$. For example, it is not an easy task to show that any isosceles triangle with aperture strictly less than $\pi/3$ cannot satisfy \eqref{omegaproperty} when $k=2$, even though it is known that modes of $\mu_2$ on these domains cannot be anti-symmetric, due to \cite{LS10}. We leave the question for future work. 

Now we describe our main results, which consist of three parts. The first part is on isoperimetric type inequalities related to \eqref{cequation} on special domains. The second part is on the relation between \eqref{cequation} and the comparison of $\kappa_1$ and $\mu_2$. The third part discusses applications of these results to a thermal insulation problem.

\subsection{Isoperimetric type inequalities on special domains}

We start with analyzing \eqref{cequation} on special domains. Throughout the paper, we use $P(\cdot)$ to denote the perimeter of a set, and $|\cdot|$ to denote the volume of a set. The first result we have is the following.
\begin{theorem}
\label{zheng1}
Let $\Omega$ be a bounded Lipschitz domain in $\mathbb{R}^n$, $c \in (0,\mu_2)$ and $u_c$ be the solution to \eqref{cequation}. Then 
\begin{align}
\label{quyu0}
    \lim_{c\rightarrow 0}c\int_{\partial \Omega}u_c\, \rm{d} \sigma =\frac{P^2(\Omega)}{|\Omega|}.
\end{align}
Moreover, if $\Omega$ is a ball of radius $R$, then
\begin{align}
\label{ballcase}
    \lim_{c\rightarrow \mu_2^-}c\int_{\partial \Omega}u_c\, \rm{d} \sigma =\frac{n-1}{n}\frac{P^2(\Omega)}{|\Omega|}(=2\pi, \, \mbox{when $n=2$}).
\end{align}
\end{theorem}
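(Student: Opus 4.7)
The plan is to set up a spectral representation of $c\int_{\partial\Omega}u_c\,d\sigma$ valid on any Lipschitz domain, deduce \eqref{quyu0} by dominated convergence, and then exploit radial symmetry together with Bessel function identities to upgrade the argument to the sharper limit \eqref{ballcase} on the ball.

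\textbf{Spectral identity.} Let $\{v_k\}_{k\ge 1}$ be an $L^2(\Omega)$-orthonormal basis of Neumann eigenfunctions with eigenvalues $0=\mu_1<\mu_2\le\mu_3\le\cdots$, so that $v_1=|\Omega|^{-1/2}$. Writing $a_k:=\int_\Omega u_c v_k\,dx$ and $b_k:=\int_{\partial\Omega}v_k\,d\sigma$, testing the weak form
\begin{equation*}
\int_\Omega\nabla u_c\cdot\nabla\varphi\,dx=c\int_\Omega u_c\varphi\,dx-\int_{\partial\Omega}\varphi\,d\sigma
\end{equation*}
against $\varphi=v_k$ gives $(\mu_k-c)a_k=-b_k$, so $a_1=P(\Omega)/(c\sqrt{|\Omega|})$ and $a_k=b_k/(c-\mu_k)$ for $k\ge 2$. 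A short auxiliary argument shows $\sum_{k\ge 2}b_k^2/\mu_k<\infty$: if $\phi\in H^1(\Omega)$ is the zero-mean weak solution of the compatible Neumann problem $-\Delta\phi=-P(\Omega)/|\Omega|$ in $\Omega$, $\partial_\nu\phi=-1$ on $\partial\Omega$, then expanding $\phi=\sum_{k\ge 2}\phi_k v_k$ yields $\mu_k\phi_k=-b_k$ and hence $\sum_{k\ge 2}b_k^2/\mu_k=\|\nabla\phi\|_{L^2(\Omega)}^2<\infty$. This absolute convergence legitimizes termwise evaluation of the trace expansion against $1$, producing
\begin{equation*}
c\int_{\partial\Omega}u_c\,d\sigma=\frac{P^2(\Omega)}{|\Omega|}-\sum_{k\ge 2}\frac{c\,b_k^2}{\mu_k-c}.
\end{equation*}
For $c\le\mu_2/2$ each summand is dominated by $2b_k^2/\mu_k$, so dominated convergence as $c\to 0^+$ yields \eqref{quyu0}.

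\textbf{Reduction to Bessel functions on the ball.} On $B_R$ radial symmetry forces $u_c(x)=U(|x|)$, where $U$ solves $U''+\tfrac{n-1}{r}U'+cU=0$ on $(0,R)$ with $U'(0)=0$ and $U'(R)=-1$. Setting $\nu=(n-2)/2$, the unique regular solution is $U(r)=A\,r^{-\nu}J_\nu(\sqrt{c}\,r)$, and the identity $\tfrac{d}{dr}[r^{-\nu}J_\nu(\sqrt{c}\,r)]=-\sqrt{c}\,r^{-\nu}J_{\nu+1}(\sqrt{c}\,r)$ combined with $U'(R)=-1$ pins down $A$ and gives
\begin{equation*}
c\int_{\partial B_R}u_c\,d\sigma=cU(R)\cdot n\omega_nR^{n-1}=n\omega_nR^{n-2}\cdot\frac{\sqrt{c}\,R\,J_\nu(\sqrt{c}\,R)}{J_{\nu+1}(\sqrt{c}\,R)}.
\end{equation*}

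\textbf{Identifying $\mu_2$ and taking the limit.} The second Neumann eigenfunctions on $B_R$ are the coordinate-like modes $u(x)=x_if(|x|)$ with $f(r)=r^{-n/2}J_{n/2}(\sqrt{\mu_2}\,r)$; translating the Neumann condition via the recurrence $J_{\nu+2}(z)=\tfrac{2(\nu+1)}{z}J_{\nu+1}(z)-J_\nu(z)$ yields the characterizing identity
\begin{equation*}
\sqrt{\mu_2}\,R\,J_\nu(\sqrt{\mu_2}\,R)=(n-1)J_{\nu+1}(\sqrt{\mu_2}\,R).
\end{equation*}
Since consecutive Bessel functions share no common positive zero, $J_{\nu+1}(\sqrt{\mu_2}\,R)\ne 0$, and passing $c\to\mu_2^-$ in the preceding displayed formula gives $n(n-1)\omega_nR^{n-2}=\tfrac{n-1}{n}P^2(B_R)/|B_R|$, which is \eqref{ballcase}. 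The main technical obstacle is the justification of the spectral identity on $\partial\Omega$ — absolute convergence of the boundary series there relies on the energy estimate supplied by the auxiliary Neumann problem above; the Bessel computation in the ball case is then essentially a closed-form calculation.
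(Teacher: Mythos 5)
Your proof is correct, and for \eqref{quyu0} it takes a genuinely different route from the paper. The paper proves \eqref{quyu0} variationally (its Theorem 3.2): it introduces the auxiliary functional $\kappa(m,\Omega)=\inf\{(\int_\Omega|\nabla u|^2+\tfrac1m(\int_{\partial\Omega}u)^2)/\int_\Omega u^2\}$, identifies $c\int_{\partial\Omega}u_c\,d\sigma$ with $m_c\kappa(m_c,\Omega)$ where $c=\kappa(m_c,\Omega)$, and sends $m_c\to\infty$; this buys them, in addition to the limit, the strict monotonicity and positivity of $c\mapsto c\int_{\partial\Omega}u_c\,d\sigma$ on $(0,\kappa_1)$, which they reuse later. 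Your spectral identity
\begin{equation*}
c\int_{\partial\Omega}u_c\,d\sigma=\frac{P^2(\Omega)}{|\Omega|}-\sum_{k\ge 2}\frac{c\,b_k^2}{\mu_k-c}
\end{equation*}
is more elementary and in fact recovers that monotonicity for free on all of $(0,\mu_2)$ (each summand is nonnegative and increasing in $c$), so it is arguably a cleaner route to \eqref{quyu0}; the one step you should flesh out is that the partial sums $\sum_{k\le N}a_kv_k$ converge to $u_c$ in $H^1(\Omega)$ (which follows from $\sum_k(1+\mu_k)a_k^2\lesssim\sum_k b_k^2/\mu_k<\infty$, supplied by your auxiliary Neumann problem), so that the boundary integral can be evaluated termwise via continuity of the trace operator — mere absolute convergence of $\sum a_kb_k$ is not by itself a justification. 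For \eqref{ballcase} your argument is essentially the paper's: the same radial ansatz $r^{-\nu}J_\nu(\sqrt c\,r)$ with $\nu=\tfrac n2-1$, the same recurrences, and the same characterization $zJ_\nu(z)=(n-1)J_{\nu+1}(z)$ of $z=\sqrt{\mu_2}R$ (the paper reaches it from the known equation $zJ_{n/2}'(z)=\tfrac{n-2}{2}J_{n/2}(z)$, you rederive it from the mode $x_if(|x|)$; both are fine). Your observation that $J_{\nu+1}(\sqrt{\mu_2}R)\ne0$, so the quotient passes continuously to the limit, is the right way to conclude.
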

Both \eqref{quyu0} and \eqref{ballcase} are rather intriguing to us because they link \eqref{cequation} to the isoperimetric ratio $P^2(\Omega)/|\Omega|$. The formula \eqref{quyu0} is obtained as a byproduct in studying asymptotic behavior of solutions to a variational problem introduced in later sections, and \eqref{ballcase} is proved by making use of recurrence formulas of Bessel functions. A different proof of \eqref{ballcase} was actually implicitly given in \cite{HLL}, via more geometric yet very complicated argument involving calculating second derivatives of shape functionals.

Motivated by \eqref{ballcase} in the case of ball domains, we then estimate the quantity $$\lim_{c \rightarrow \mu_2^-}c\int_{\partial \Omega}u_c \, \rm{d} \sigma$$ among rectangular boxes in $\mathbb{R}^n$, which we mean by Cartesian products of bounded open intervals. We are particularly focused on the comparison of the above limit  with the quantity $\tfrac{n-1}{n}\tfrac{P^2(\Omega)}{|\Omega|}$. By playing several inequalities on elementary symmetric polynomials, we derive the following theorem. 
\begin{theorem}
\label{zheng2}
Let $c \in (0,\mu_2)$ and $u_c$ be the solution to \eqref{cequation}. Then among the class of rectangular boxes $\Omega \subset \mathbb{R}^n$, we have
    \begin{align}
    \label{jianrenzhichang}
        \lim_{c\rightarrow \mu_2}c\int_{\partial \Omega}u_c\, \rm{d} \sigma \ge \frac{n-1}{n}\frac{P^2(\Omega)}{|\Omega|}.
    \end{align}
The equality in \eqref{jianrenzhichang} is achieved only when $\Omega$ is a cube in $\mathbb{R}^n$.
\end{theorem}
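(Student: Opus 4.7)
The plan rests on three pieces: an explicit separation-of-variables solution for the box, a sharp trigonometric bound on the cotangent, and an identity among elementary symmetric polynomials. Writing $\Omega = \prod_{i=1}^n (0, a_i)$ with $a_1 \ge \cdots \ge a_n$, I would first seek $u_c$ in the additive form $u_c(x) = \sum_{i=1}^n f_i(x_i)$, which decouples \eqref{cequation} into $n$ one-dimensional problems $f_i'' + c f_i = 0$ on $(0,a_i)$ with $f_i'(0)=1$ and $f_i'(a_i)=-1$, whose explicit solution is $f_i(x_i) = \cos(\sqrt c(x_i-a_i/2))/[\sqrt c\,\sin(\sqrt c\, a_i/2)]$. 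Since $c\in(0,\mu_2)$ is not a Neumann eigenvalue, this is the unique solution of \eqref{cequation}.

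A face-by-face integration, using the symmetry $f_i(0)=f_i(a_i)=\cot(\sqrt c\, a_i/2)/\sqrt c$ and the identity $\int_0^{a_j} f_j = 2/c$, yields (with $e_k=e_k(a_1,\ldots,a_n)$ the elementary symmetric polynomials)
\[
c\int_{\partial\Omega} u_c\,d\sigma \;=\; 2\sqrt{c}\sum_{i=1}^n \frac{e_n}{a_i}\cot\!\left(\frac{\sqrt c\, a_i}{2}\right) + 8\, e_{n-2}.
\]
Now I let $c \to \mu_2^- = (\pi/a_1)^2$, so $\cot(\sqrt c\, a_1/2)\to 0$ while the remaining cotangents stay finite. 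Rescaling via $r_i = a_i/a_1\in(0,1]$ (so $r_1=1$), dividing by $a_1^{n-2}$, and using $P(\Omega)=2e_{n-1}$ and $|\Omega|=e_n$, the inequality \eqref{jianrenzhichang} is equivalent to the dimensionless statement
\[
\frac{\pi\,e_n(\vec r)^{2}}{2}\sum_{i=1}^n\frac{\cot(\pi r_i/2)}{r_i} \,+\, 2\,e_{n-2}(\vec r)\,e_n(\vec r) \;\ge\; \frac{n-1}{n}\,e_{n-1}(\vec r)^{2}.
\]

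The heart of the argument is the sharp trigonometric inequality $\pi r\cot(\pi r/2)\ge 2-2r^2$ for $r\in(0,1]$, with equality iff $r=1$. I would derive this from the Mittag--Leffler expansion $\pi x\cot(\pi x)=1-2\sum_{k\ge 1}\frac{x^2}{k^2-x^2}$: applied with $x=r/2$, the claim reduces to $\sum_{k\ge 1}\frac{2}{4k^2-r^2}\le 1$, which at $r=1$ telescopes to equality via $\frac{2}{4k^2-1}=\frac{1}{2k-1}-\frac{1}{2k+1}$, and is strictly monotone in $r^2$ otherwise. Feeding this bound in, the left-hand side above is at least $e_n^{2}\sum_i 1/r_i^{2} - n\,e_n^{2} + 2\,e_n e_{n-2}$. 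I then invoke the symmetric polynomial identity $e_n^{2}\sum_i 1/r_i^{2} = e_{n-1}^{2} - 2\,e_n e_{n-2}$, obtained from $\sum_i 1/r_i^{2} = (\sum_i 1/r_i)^{2} - 2\sum_{i<j}1/(r_ir_j)$ together with $e_k(1/\vec r)=e_{n-k}(\vec r)/e_n(\vec r)$. The cross term $2\,e_n e_{n-2}$ cancels cleanly, and the inequality collapses to $e_{n-1}^{2}\ge n^{2} e_n^{2}$, equivalently $\sum_i 1/r_i \ge n$, which is immediate from $r_i\le 1$ with equality iff every $r_i=1$, i.e.\ $\Omega$ is a cube.

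The principal obstacle, and the point requiring the most care, is choosing the trigonometric bound so that it is simultaneously sharp at the cube and algebraically compatible with the elementary symmetric polynomial identities. The bound $\pi r\cot(\pi r/2)\ge 2-2r^{2}$ is uniquely tuned for this role: its equality case $r=1$ matches that of $\sum_i 1/r_i\ge n$, and its low-order polynomial structure produces exactly the $-n\,e_n^{2}$ term needed to trigger the cancellation of the cross term $2\,e_n e_{n-2}$ that appears on both sides of the reduction.
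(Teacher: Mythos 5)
Your proposal is correct, and I verified each step: the separated solution and the face-by-face computation reproduce (in the $\prod(0,a_i)$ normalization) exactly the boundary integral the paper computes in \eqref{jisuan1}--\eqref{left1}, the Mittag--Leffler argument for $\pi r\cot(\pi r/2)\ge 2-2r^2$ on $(0,1]$ is sound (the telescoping $\sum_{k\ge1}\tfrac{2}{4k^2-1}=1$ and monotonicity in $r^2$ give equality exactly at $r=1$), the identity $e_n^2\sum_i r_i^{-2}=e_{n-1}^2-2e_ne_{n-2}$ is standard, and the final reduction to $\sum_i 1/r_i\ge n$ with equality iff all $r_i=1$ pins down the cube. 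The overall skeleton matches the paper's (explicit solution, lower bound on the cotangent, elementary symmetric polynomial algebra), but your key lemma is genuinely different and sharper: the paper uses only $\tfrac{2}{\pi}\theta\le\sin\theta\le\theta$ to get the weaker bound $\pi r\cot(\pi r/2)\ge 2-2r$, and must then compensate in the endgame with a chain of identities in the symmetric polynomials $d_k(a_2,\dots,a_n)$ culminating in Maclaurin's inequality $\bigl(e_{n-2}/(n-1)\bigr)^{1/(n-2)}\ge e_{n-1}^{1/(n-1)}$. Your quadratic bound $2-2r^2$ is tuned so that the Newton-type cross term $2e_ne_{n-2}$ cancels identically, collapsing the symmetric-function work to the trivial $\sum_i 1/r_i\ge n$; the trade-off is that the trigonometric lemma itself requires the product/partial-fraction expansion of the cotangent rather than a two-line sine estimate. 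Both routes yield the same equality case, and yours arguably isolates more cleanly where the constant $\tfrac{n-1}{n}$ comes from.
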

The proof of \eqref{jianrenzhichang} is equivalent to proving the following elementary inequality involving both trigonometric functions and $\sigma_k$ operators:
\begin{align}
\label{nandian}
    \sum_{i=2}^n2^{n-1}\pi \frac{\sigma_n}{a_1a_i}\frac{1}{\tan (\frac{\pi}{2}\frac{a_i}{a_1})}+2^{n+1}\sigma_{n-2}\ge \frac{n-1}{n} 2^n \frac{\sigma_{n-1}^2}{\sigma_n},
\end{align}
where $a_1\ge \cdots \ge a_n>0$, $\sigma_k$ is denoted as the $k$-th elementary symmetric polynomials in variables $a_1, \cdots, a_n$, and the equality is achieved only when $a_1=\cdots=a_n$. Note that in general, the second term in the left hand side of \eqref{nandian} is less than or equal to the right hand side due to Newton's inequalities, and hence the nontrivial part in the proof lies in dealing with the first term.

As a consequence of Theorem \ref{zheng2} and Maclaurin's inequality, one can immediately see that among rectangular domains with prescribed perimeter, cubes are the unique minimizers to the left hand side of \eqref{jianrenzhichang}.


After establishing Theorem \ref{zheng1} and Theorem \ref{zheng2}, one may conjecture that \eqref{jianrenzhichang} could be true for general Lipschitz domains. Unfortunately, results stated in the next subsection actually disprove the conjecture, by demonstrating that \eqref{jianrenzhichang} is always false for domains satisfying $\kappa_1<\mu_2$, where $\kappa_1$ is as in \eqref{defofkappa}. It remains open whether \eqref{jianrenzhichang} is valid on domains with $\kappa_1=\mu_2$.

Nevertheless, numerical results suggest that \eqref{jianrenzhichang} might be true for convex domains satisfying $\kappa_1=\mu_2$, and all of such domains so far we have found have some symmetric properties. For example, in section 8 we will provide abundant numerical results on regular polygons, super-equilateral triangles, ellipses, rhombuses and so on, all of which satisfy \eqref{jianrenzhichang}. Our numerical results suggest several interesting phenomena which have not been found in literature, and can provide valuable inspirations for future researches. Notably, some of the numerical results highly support the following conjecture:
\begin{conjecture}
\label{conj2}
Let $\Omega$ be a convex domain in $\mathbb{R}^2$ such that $\kappa_1=\mu_2$. Then 
\begin{align}
\label{planar}
    \lim_{c\rightarrow \mu_2} c\int_{\partial \Omega}u_c\, \rm{d} \sigma \ge \frac{1}{2}\frac{P^2(\Omega)}{|\Omega|},
\end{align}with equality holding if and only if $\Omega$ is a disk or a regular polygon in $\mathbb{R}^2$ with $k$ sides, $k \ge 4$.
\end{conjecture}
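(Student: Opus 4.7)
The plan is to expand $u_c$ in the Neumann eigenbasis and reduce the conjecture to a clean spectral inequality. Let $\{v_k\}_{k\ge 1}$ be an $L^2(\Omega)$-orthonormal basis of Neumann Laplacian eigenfunctions with eigenvalues $0=\mu_1<\mu_2\le\mu_3\le\cdots$, and set $\beta_k:=\int_{\partial\Omega}v_k\,d\sigma$, so that $\beta_1^2=P^2(\Omega)/|\Omega|$. Testing \eqref{cequation} against $v_k$ and using Green's identity gives $\int_\Omega u_c v_k\,dx=\beta_k/(c-\mu_k)$, hence
\begin{align*}
c\int_{\partial\Omega}u_c\,d\sigma\;=\;\frac{P^2(\Omega)}{|\Omega|}\;-\;\sum_{k\ge 2}\frac{c\,\beta_k^2}{\mu_k-c}.
\end{align*}
By the equivalence $\kappa_1=\mu_2\Leftrightarrow \int_{\partial\Omega}u_c\,d\sigma>0$ for all $c\in(0,\mu_2)$ established in the paper, our hypothesis forces $\beta_k=0$ for every $k$ with $\mu_k=\mu_2$ (otherwise a term in the sum blows up as $c\to\mu_2^-$, making $\int_{\partial\Omega}u_c$ negative near $\mu_2$). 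Passing to the limit yields
\begin{align*}
\lim_{c\to\mu_2^-}c\int_{\partial\Omega}u_c\,d\sigma\;=\;\frac{P^2(\Omega)}{|\Omega|}\;-\;\mu_2\sum_{\mu_k>\mu_2}\frac{\beta_k^2}{\mu_k-\mu_2},
\end{align*}
so \eqref{planar} is equivalent to the spectral bound
\begin{align*}
\sum_{\mu_k>\mu_2}\frac{\beta_k^2}{\mu_k-\mu_2}\;\le\;\frac{P^2(\Omega)}{2\,\mu_2\,|\Omega|}.
\end{align*}

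As a warm-up and consistency check, I would re-derive the disk case directly from this identity. Separation of variables for the disk of radius $R$ decomposes the eigenfunctions as $J_m(j'_{m,j}r/R)e^{\pm im\theta}$; angular orthogonality annihilates $\beta_k$ for $m\neq 0$, so only the radial modes $v_{0,j}\propto J_0(j'_{0,j}r/R)$ survive, and a direct Bessel normalization computation gives $\beta_{0,j}^2=4\pi$ for every $j$. The required bound then reduces to the identity
\begin{align*}
\sum_{j\ge 1}\frac{1}{j'_{0,j}{}^{2}-j'_{1,1}{}^{2}}\;=\;\frac{1}{2\,j'_{1,1}{}^{2}},
\end{align*}
which follows from the Rayleigh--Kishore formula $\sum_k(j_{\nu,k}^2-z^2)^{-1}=J_{\nu+1}(z)/(2zJ_\nu(z))$ applied with $\nu=1$ and $z=j'_{1,1}$, using $j_{1,k}=j'_{0,k}$ together with $J_2(z)=(2/z)J_1(z)-J_0(z)$ and $J_1(j'_{1,1})=j'_{1,1}J_0(j'_{1,1})$. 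This recovers the planar case of Theorem~\ref{zheng1} and shows that the constant $1/2$ in \eqref{planar} is sharp.

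For a regular $k$-gon with $k\ge 4$ the plan is to exploit its $D_k$-symmetry. Decomposing $L^2(\Omega)$ into isotypic components for $D_k$, the linear functional $v\mapsto\int_{\partial\Omega}v\,d\sigma$ is $D_k$-invariant and hence vanishes on every non-trivial irreducible component; thus $\beta_k=0$ unless $v_k$ lies in the trivial representation. The surviving eigenfunctions correspond, by unfolding through reflections, to Neumann eigenfunctions of the fundamental triangle $T_k$ of angles $(\pi/k,\pi/2,\pi/2-\pi/k)$. For $k=4$ the required bound is the square case of Theorem~\ref{zheng2}. For $k\ge 5$, the plan is an explicit series analysis of the invariant modes on $T_k$, combined with a perturbative comparison with the disk's Bessel spectrum as $k\to\infty$. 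Strictness of \eqref{planar} for non-$D_k$-symmetric convex shapes (in particular equilateral triangles, which satisfy $\kappa_1=\mu_2$ yet lie outside the conjectured equality class) would follow from showing that the higher-mode contributions strictly fall short of saturating the bound.

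The principal obstacle is the case of a general convex $\Omega$ with $\kappa_1=\mu_2$ and no symmetry. Two natural strategies present themselves. The first is a shape-derivative approach: compute the first variation of the shape functional $L(\Omega)-\tfrac12 P^2(\Omega)/|\Omega|$ along volume-preserving smooth perturbations via Hadamard's formula together with the identity $\int_{\partial\Omega}u_{\mu_2}\,d\sigma=\mu_2\int_\Omega u_{\mu_2}^2\,dx-\int_\Omega|\nabla u_{\mu_2}|^2\,dx$, and then classify the critical shapes, ideally forcing them into the conjectured equality family. The second is a continuity/homotopy argument along the stratum of convex planar domains satisfying $\kappa_1=\mu_2$, starting from the disk (where equality is established) and propagating by compactness. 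The hard point is that the spectral sum mixes all modes $\mu_k>\mu_2$, and no off-the-shelf estimate (spectral gap, trace inequality, or Neumann--Poincar\'e) appears sharp enough to yield the constant $1/2$; a complete proof will likely require a genuinely new geometric inequality relating $\Omega$ to the $L^2$-behaviour of its higher Neumann modes on the boundary.
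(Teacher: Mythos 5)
This statement is presented in the paper as an open conjecture --- the authors explicitly say ``Conjecture \ref{conj2} is open to us'' --- so there is no proof in the paper to compare against, and your proposal does not close the gap either: it is a reduction plus a research program, not a proof. What you do establish has real value. The eigenfunction expansion giving
$c\int_{\partial\Omega}u_c\,d\sigma=\frac{P^2(\Omega)}{|\Omega|}-\sum_{k\ge2}\frac{c\,\beta_k^2}{\mu_k-c}$
is correct (modulo one technical point: termwise integration of the trace of $u_c$ over $\partial\Omega$ needs justification beyond $L^2(\Omega)$-convergence of the series, e.g.\ by showing convergence in $H^1(\Omega)$ or by a duality argument; the passage to the limit $c\to\mu_2^-$ is then fine by monotone convergence). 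This identity gives a cleaner route to \eqref{quyu0} and to the monotonicity of $f(c,\Omega)$ than the paper's variational argument in Theorem \ref{main2}, and your disk verification via the Rayleigh sum $\sum_k(j_{1,k}^2-z^2)^{-1}=J_2(z)/(2zJ_1(z))$ together with $\beta_{0,j}^2=4\pi$ is a genuinely different and arguably slicker derivation of the planar case of \eqref{ballcase} than the recurrence-formula computation in Proposition \ref{ballmotivation}. The observation that $\kappa_1=\mu_2$ forces $\beta_k=0$ on the $\mu_2$-eigenspace is also consistent with Proposition \ref{byproduct1}.

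The gaps are exactly where you flag them, but they are the entire content of the conjecture. For regular $k$-gons with $k\ge5$ you offer only ``an explicit series analysis \dots combined with a perturbative comparison,'' with no estimate actually carried out; note that the conjecture asserts \emph{equality} there, which your spectral reformulation turns into an exact summation identity for the invariant modes of the fundamental triangle --- something for which no closed form is known (these triangles are not integrable billiards for $k\ge5$). For general convex $\Omega$ with $\kappa_1=\mu_2$ you name two strategies without executing either, and the ``only if'' half of the equality characterization (that equality forces $\Omega$ to be a disk or regular $k$-gon, $k\ge4$) is not addressed at all. So the proposal should be read as a promising reformulation --- the bound $\sum_{\mu_k>\mu_2}\beta_k^2/(\mu_k-\mu_2)\le P^2(\Omega)/(2\mu_2|\Omega|)$ is a crisp target --- rather than as a proof of the conjecture.
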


Surprisingly, the equality above cannot be achieved at equilateral triangles, due to explicit formulas of solutions on equilateral triangles, see section 6. Conjecture \ref{conj2} is open to us, and particularly it deserves further consideration regarding the equality principle of \eqref{planar}.


\subsection{Eigenvalue comparison results}

The results described in this subsection turn to the comparison of $\kappa_1$ and $\mu_2$, which can be derived from analysis of \eqref{cequation}, and is also partially motivated from Theorem \ref{zheng1} and Theorem \ref{zheng2} above.

First, it is easy to prove that $\kappa_1 \le \mu_2$ by a suitable choice of trial functions, but it is a challenging task to geometrically classify domains such that $\kappa_1=\mu_2$.  In general, proving whether or not $\kappa_1=\mu_2$ on special domains is a nontrivial problem, and even for rectangular boxes, it takes very sophisticated argument to conclude this fact, see \cite{NP15}. Nevertheless, we obtain in the following theorem a necessary and sufficient condition such that $\kappa_1=\mu_2$, through the analysis of the equation \eqref{cequation}. As a byproduct, it disproves  \eqref{jianrenzhichang} for domains with $\kappa_1<\mu_2$.
\begin{theorem}
\label{maintheorem}
Let $\Omega$ be a bounded Lipschitz domain in $\mathbb{R}^n$, $\kappa_1$ be defined as in \eqref{defofkappa} and $\mu_2$ be the second Neumann Laplacian eigenvalue on $\Omega$. Then $\kappa_1\le \mu_2$, and the equality holds if and only if 
\begin{align}
\label{iff}
    c\int_{\partial \Omega}u_c\, \rm{d} \sigma >0, \quad \mbox{\rm{for any} $c \in (0,\mu_2)$,}
\end{align}where $u_c$ is the solution to \eqref{cequation}.
Moreover, if $\kappa_1<\mu_2$, then
\begin{align}
\label{zonghe}
    \begin{cases}
    c\int_{\partial \Omega}u_c\,\rm{d} \sigma>0,\quad &\mbox{\rm{for any} $c\in (0,\kappa_1)$}\\
    c\int_{\partial \Omega}u_c\, \rm{d} \sigma<0, \quad &\mbox{\rm{for any} $c\in (\kappa_1,\mu_2)$}\\
    \lim_{c \rightarrow \kappa_1} c\int_{\partial \Omega}u_c\, \rm{d} \sigma=\kappa_1\int_{\partial \Omega}u_{\kappa_1}\, \rm{d} \sigma=0.
    \end{cases}
\end{align}
\end{theorem}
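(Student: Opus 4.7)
The plan is to analyze $f(c) := \int_{\partial \Omega} u_c \, d\sigma$ as a function of $c \in (0,\mu_2)$ and tie its zeros to the value of $\kappa_1$. First, for the bound $\kappa_1 \le \mu_2$, I would let $\varphi$ be a Neumann eigenfunction for $\mu_2$ (so $\int_\Omega \varphi = 0$), set $\bar\varphi := |\partial\Omega|^{-1}\int_{\partial\Omega}\varphi\,d\sigma$, and use $w := \varphi - \bar\varphi$ as an admissible trial function in \eqref{defofkappa}; the identities $\int|\nabla w|^2 = \mu_2\int\varphi^2$ and $\int w^2 = \int\varphi^2 + \bar\varphi^2|\Omega| \ge \int \varphi^2$ give a Rayleigh quotient at most $\mu_2$.

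Second, I would show $f$ is strictly decreasing and continuous on $(0,\mu_2)$. Multiplying the equation for $u_{c_1}$ by $u_{c_2}$, applying Green's identity and the boundary condition $\partial u_{c_i}/\partial\nu = -1$, one gets
\begin{equation*}
\int_\Omega \nabla u_{c_1}\cdot\nabla u_{c_2}\,dx + \int_{\partial\Omega} u_{c_2}\,d\sigma = c_1\int_\Omega u_{c_1}u_{c_2}\,dx,
\end{equation*}
and subtracting the swapped-index version yields $f(c_2)-f(c_1) = (c_1-c_2)\int u_{c_1}u_{c_2}\,dx$. Sending $c_2 \to c_1$, which is justified by continuous $H^1$-dependence of $u_c$ on $c$, gives $f'(c) = -\int u_c^2\,dx < 0$. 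Combined with \eqref{quyu0} from Theorem \ref{zheng1}, $f(c) \to +\infty$ as $c \to 0^+$, so $f$ has at most one zero in $(0,\mu_2)$.

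Third, I would connect $\kappa_1$ with the zero of $f$. If $f(c^*) = 0$ at some $c^* \in (0,\mu_2)$, then $u_{c^*}$ is admissible for \eqref{defofkappa}, and Green's identity yields $\int|\nabla u_{c^*}|^2 = c^*\int u_{c^*}^2 - \int_{\partial\Omega} u_{c^*}\,d\sigma = c^*\int u_{c^*}^2$, so $\kappa_1 \le c^* < \mu_2$. Conversely, when $\kappa_1 < \mu_2$ the direct method (weak $H^1$ limits plus Rellich and trace compactness) produces a minimizer $u$ of \eqref{defofkappa}, and the Lagrange multiplier rule gives $\lambda \in \mathbb{R}$ with $-\Delta u = \kappa_1 u$ in $\Omega$ and $\partial u/\partial\nu = \lambda$ on $\partial\Omega$. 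The case $\lambda = 0$ would make $u$ a nonconstant Neumann eigenfunction, forcing $\kappa_1 \ge \mu_2$, a contradiction; hence $\lambda \ne 0$, and since $\kappa_1$ is not a Neumann eigenvalue, uniqueness gives $u_{\kappa_1} = -u/\lambda$, whence the constraint $\int_{\partial\Omega} u\,d\sigma = 0$ yields $f(\kappa_1) = 0$.

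Combining these: $\kappa_1 = \mu_2$ exactly when $f$ has no zero in $(0,\mu_2)$, which by strict monotonicity and $f(0^+) = +\infty$ is equivalent to $f > 0$ throughout, i.e., \eqref{iff}; and when $\kappa_1 < \mu_2$, the unique zero at $\kappa_1$ together with strict monotonicity and continuity of $f$ produces the full sign pattern and the limit in \eqref{zonghe}. The main obstacle is the Lagrange multiplier step and the careful exclusion of $\lambda = 0$: this is what forges the bridge from the variational problem for $\kappa_1$ to the equation \eqref{cequation}, enabling the sharp identification of $\kappa_1$ as the unique zero of $f$ whenever $\kappa_1 < \mu_2$.
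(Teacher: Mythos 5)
Your proof is correct, and it reaches the theorem by a route that differs from the paper's in both of its key lemmas. The paper assembles the result from Proposition \ref{gaojiebijiao} (the min--max comparison $\kappa_1\le\mu_2$), Theorem \ref{main2} (positivity and monotonicity of $c\int_{\partial\Omega}u_c\,d\sigma$ on $(0,\kappa_1)$, proved via the penalized functional $\kappa(m,\Omega)$ and the identity $c\int_{\partial\Omega}u_c\,d\sigma=m_c\kappa(m_c,\Omega)$), and Corollary \ref{311} (strict negativity on $(\kappa_1,\mu_2)$, proved by a two-dimensional trial-space contradiction with the min--max characterization of $\mu_2$ via Proposition \ref{gensufintro}). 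You instead (i) obtain global strict monotonicity of $f(c)=\int_{\partial\Omega}u_c\,d\sigma$ on all of $(0,\mu_2)$ from the symmetric Green identity $f(c_2)-f(c_1)=(c_1-c_2)\int_\Omega u_{c_1}u_{c_2}\,dx$, which gives the explicit derivative $f'(c)=-\int_\Omega u_c^2\,dx<0$, and (ii) identify $\kappa_1$ as the unique zero of $f$ by passing through the Euler--Lagrange equation of the constrained problem \eqref{defofkappa} and ruling out a vanishing Neumann datum. Your version is more elementary and yields a sharper intermediate fact (monotonicity up to $\mu_2$, not merely up to $\kappa_1$); the paper's penalization and span arguments are heavier here but are reused elsewhere (the insulation application in Section 7 and the higher-eigenvalue statement $\kappa_i\le\mu_{i+1}$). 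Two points you should make explicit to close the argument: first, $\kappa_1>0$ (otherwise the minimizer of \eqref{defofkappa} would be a constant with vanishing boundary mean, hence zero), which is needed before invoking unique solvability of \eqref{cequation} at $c=\kappa_1$ and writing $u_{\kappa_1}=-u/\lambda$; second, the compactness of the trace map $H^1(\Omega)\to L^2(\partial\Omega)$ on Lipschitz domains, which is what lets the constraint $\int_{\partial\Omega}u\,d\sigma=0$ pass to the weak limit in the direct method.
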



For the critical case $c=\mu_2$, we also have the following result.
\begin{proposition}
\label{byproduct1}
Let $\Omega$ be a bounded Lipschitz domain. If $\kappa_1=\mu_2$, then \eqref{cequation} admits a solution $u_{\mu_2}$ when $c=\mu_2$. Conversely, if $\eqref{cequation}$ admits a solution $u_{\mu_2}$ when $c=\mu_2$, and furthermore $u_{\mu_2}$ satisfies
\begin{align}
    \label{wentia}
\int_{\partial \Omega}u_{\mu_2}\, \rm{d} \sigma>0,    \end{align}
then $\kappa_1=\mu_2$. Moreover, in such situation, the functions at which the infimum in \eqref{defofkappa} is achieved must be linear combinations of Neumann Laplacian eigenfunctions of $\mu_2$.
\end{proposition}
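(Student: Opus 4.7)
The plan is to address the three assertions separately, using Theorem \ref{existence} and Theorem \ref{maintheorem} as the main inputs.

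For the first implication ($\kappa_1 = \mu_2$ forces existence of $u_{\mu_2}$), I argue by contrapositive. If $u_{\mu_2}$ does not exist, Theorem \ref{existence} produces a Neumann eigenfunction $v_2$ of $\mu_2$ with $b := \int_{\partial \Omega} v_2 \neq 0$. The shifted function $\tilde v := v_2 - b/P(\Omega)$ satisfies $\int_{\partial \Omega} \tilde v = 0$ and is thus admissible for \eqref{defofkappa}. Because eigenfunctions of $\mu_2$ are $L^2$-orthogonal to the constants, $\int_\Omega v_2 = 0$, so $\int_\Omega \tilde v^2 = \int_\Omega v_2^2 + b^2 |\Omega|/P(\Omega)^2$ while $\int_\Omega |\nabla \tilde v|^2 = \mu_2 \int_\Omega v_2^2$. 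The Rayleigh quotient of $\tilde v$ is therefore strictly less than $\mu_2$, contradicting $\kappa_1 = \mu_2$.

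For the converse, assume $u_{\mu_2}$ exists and $I := \int_{\partial \Omega} u_{\mu_2} > 0$. The key step will be $\lim_{c \to \mu_2^-} \int_{\partial \Omega} u_c = I$; once established, the positivity of $c \int_{\partial \Omega} u_c$ near $c = \mu_2^-$ rules out the case $\kappa_1 < \mu_2$ of Theorem \ref{maintheorem} (which would force $c \int_{\partial \Omega} u_c < 0$ on $(\kappa_1, \mu_2)$), so $\kappa_1 = \mu_2$. To prove the limit, I will expand $u_c$ in an $L^2$-orthonormal Neumann eigenbasis $\{v_k\}$ by testing the weak form against each $v_k$; this yields $u_c = \sum_k b_k/(c - \mu_k)\, v_k$ where $b_k := \int_{\partial \Omega} v_k$. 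Theorem \ref{existence} applied to the existing $u_{\mu_2}$ forces $b_k = 0$ for every index $k$ with $\mu_k = \mu_2$, so the resonant modes are absent and $u_c$ converges, in $L^2$ and then in $H^1$ by elliptic regularity, to the particular solution $\sum_{\mu_k \neq \mu_2} b_k/(\mu_2 - \mu_k)\, v_k$, whose boundary integral equals $I$ (the value is independent of the chosen representative since $\mu_2$-eigenfunctions have vanishing boundary integral). Trace continuity then delivers $\int_{\partial \Omega} u_c \to I$.

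For the characterization of minimizers, let $u$ achieve the infimum in \eqref{defofkappa}. The Lagrange multiplier rule for the constraints $\int_\Omega u^2 = 1$ and $\int_{\partial \Omega} u = 0$ produces a constant $\lambda$ with $-\Delta u = \mu_2 u$ in $\Omega$ and $\partial u/\partial \nu = \lambda$ on $\partial \Omega$. If $\lambda = 0$, then $u$ is itself a Neumann eigenfunction of $\mu_2$. If $\lambda \neq 0$, then $w := -u/\lambda$ solves \eqref{cequation} at $c = \mu_2$ with $\int_{\partial \Omega} w = 0$; but by the previous paragraph every such solution has boundary integral $I > 0$, a contradiction. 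Hence $\lambda = 0$ and $u$ belongs to the $\mu_2$-eigenspace, as claimed. The main obstacle is the convergence analysis in the converse direction: justifying passage to the limit in the trace functional $\int_{\partial \Omega} \cdot\, d\sigma$ requires the vanishing of the $\mu_2$-resonances (to keep $u_c$ bounded in $H^1$ uniformly in $c$) together with the summability $\sum b_k^2/\mu_k < \infty$ that can be extracted from $u_c \in H^1(\Omega)$.
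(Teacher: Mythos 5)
Your first implication and the eigenspace characterization coincide with the paper's proof: the shifted eigenfunction $v_2 - b/P(\Omega)$ as a competitor in \eqref{defofkappa} is exactly the paper's computation \eqref{guiju}, and the ``$\lambda\neq 0$ forces a boundary-mean-zero solution of \eqref{cequation} at $c=\mu_2$, contradicting $I>0$'' argument is the same dichotomy the paper uses. Where you genuinely diverge is the converse. The paper disposes of it in one line by applying Proposition \ref{gensufintro} with $i=1$ and $c=\mu_2$: that proposition is stated for $c$ in the \emph{closed} interval $[\kappa_1,\mu_2]$, so the hypothesis $\int_{\partial\Omega}u_{\mu_2}\,d\sigma>0$ feeds into the min--max trial-space argument directly at the endpoint, and no limit $c\to\mu_2^-$ is needed. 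You instead work at $c<\mu_2$, expand $u_c=\sum_k b_k(c-\mu_k)^{-1}v_k$, use the vanishing of the resonant coefficients (Theorem \ref{existence}) to get uniform $H^1$ control and convergence of the boundary integral to $I$, and then invoke the sign dichotomy \eqref{zonghe} (equivalently Corollary \ref{311}, which is logically prior to this proposition, so there is no circularity). This is correct, and the spectral expansion is a nice explicit description of $u_c$ that the paper never writes down; the price is the extra technical layer (summability of $\sum b_k^2/\mu_k$, uniform bounds on $(c-\mu_k)^{-1}$ away from the resonant modes, trace continuity) that you only sketch in the final sentence, all of which is avoidable by noticing that the min--max comparison already works at $c=\mu_2$ itself.
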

We also show that the converse of the first statement in Proposition \ref{byproduct1} is not true. That is, even if there exists a solution to \eqref{cequation} when $c=\mu_2$, or equivalently \eqref{omegaproperty} holds for $k=2$, it might happen that $\kappa_1<\mu_2$. This is discussed in section 9 by studying comparison of $\kappa_1$ and $\mu_2$ on circular sectors. In fact, we show that there exist circular sectors on which $\mu_2$ has only odd modes, but it can happen on such sectors that $\kappa_1<\mu_2$, see Proposition \ref{disprovesector}. On the other hand, our numerical results on \eqref{cequation} in section 8 suggest that for isosceles triangles, $\kappa_1=\mu_2$ if and only if \eqref{omegaproperty} holds for $k=2$. This phenomenon deserves exploration in future work.

We remark that $\kappa_1$ is referred to as the first nonzero eigenvalue of boundary mean zero Laplacian with constant Neumann data, and thus we are led to study higher eigenvalues. Let $0<\kappa_1\le \kappa_2\le \cdots \le \kappa_i \rightarrow \infty$ be the eigenvalues of boundary mean zero Laplacian with constant Neumann data, and then the corresponding eigenfunctions $\{w_i\}_{i=1}^{\infty}$ satisfy
\begin{align}
    \label{eigenk}
\begin{cases}
-\Delta w_i=\kappa_i w_i\quad &\mbox{in $\Omega$}\\
\frac{\partial w_i}{\partial \nu}=-\frac{\kappa_i}{P(\Omega)}\int_{\Omega}w_i\,dx\quad &\mbox{on $\partial \Omega$}\\
\int_{\partial \Omega}w_i\, \rm{d} \sigma=0.
\end{cases}    
\end{align}
In general, we can prove that $\kappa_i \le \mu_{i+1}$. Moreover, by relating to the equation \eqref{cequation}, we obtain the following sufficient condition for $\kappa_i=\mu_{i+1}$, which is often useful in determining the value of $\kappa_i$, especially for the case $i=1$.
\begin{proposition}
\label{gensufintro}
Let $u_c$ denote the solution to \eqref{cequation}. Fixing a positive integer $i$, if there exists some $c \in [\kappa_i,\mu_{i+1}]$ such that $\int_{\partial \Omega}u_c\, \rm{d} \sigma>0$, then $\kappa_i=\mu_{i+1}$.
\end{proposition}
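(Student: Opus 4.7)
The plan is to analyze the scalar function $g(c) := \int_{\partial\Omega} u_c\, d\sigma$ and reduce the proposition to a counting/interlacing statement relating the $\kappa$-spectrum of \eqref{eigenk} to the Neumann spectrum. First I would expand $u_c$ in an $L^2$-orthonormal basis $\{v_k\}$ of Neumann eigenfunctions (with eigenvalues $\mu_k$, $\mu_1 = 0$) and, by testing \eqref{cequation} against $v_k$ and integrating by parts, derive the coefficients $(\mu_k - c) a_k = -b_k$ with $b_k := \int_{\partial\Omega} v_k\, d\sigma$ (so $b_1 = P(\Omega)/|\Omega|^{1/2}$). This yields
\begin{equation}
\label{plan-g}
g(c) = \sum_k \frac{b_k^2}{c - \mu_k}, \qquad g'(c) = -\sum_k \frac{b_k^2}{(c-\mu_k)^2} < 0,
\end{equation}
so $g$ is meromorphic with simple poles exactly at the \emph{active} Neumann eigenvalues (those $\mu_k$ with $b_k \neq 0$) and strictly decreasing on each connected component of its domain.

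Next I would relate the $\kappa$-spectrum to $g$. A direct substitution into \eqref{eigenk} shows that $\kappa$ is a $\kappa$-eigenvalue with an eigenfunction $w$ satisfying $\int_\Omega w\, dx \neq 0$ if and only if $\kappa$ is not a Neumann eigenvalue and $g(\kappa) = 0$, in which case the eigenfunction is proportional to $u_\kappa$. The remaining $\kappa$-eigenfunctions have zero mean and are therefore Neumann eigenfunctions lying in $\ker L$, where $L(v) := \int_{\partial\Omega} v\, d\sigma$. Combining these two contributions with Cauchy interlacing for the Laplacian compressed to the codimension-one subspace $\{v \in H^1(\Omega) : L(v) = 0\}$, I would establish the key identity
\begin{equation}
\label{plan-counting}
N_\kappa(c) = N_\mu(c) - \mathbf{1}_{\{g(c) > 0\}},
\end{equation}
valid at every $c$ that is not a pole of $g$, where $N_\kappa(c)$ and $N_\mu(c)$ are the counting functions (with multiplicity) of the $\kappa$- and Neumann-spectra in $(-\infty, c]$.

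Finally, I would argue by contradiction. Suppose $\kappa_i < \mu_{i+1}$ while $g(c_0) > 0$ for some $c_0 \in [\kappa_i, \mu_{i+1}]$. For any regular $c \in [\kappa_i, \mu_{i+1})$, the bounds $c \ge \kappa_i$ and $c < \mu_{i+1}$ give $N_\kappa(c) \ge i$ and $N_\mu(c) \le i$, which together with \eqref{plan-counting} force $N_\kappa(c) = N_\mu(c) = i$ and hence $g(c) \le 0$. If $c_0 < \mu_{i+1}$, this already contradicts the hypothesis. If $c_0 = \mu_{i+1}$, then the implicit assumption that $u_{\mu_{i+1}}$ exists, combined with Theorem \ref{existence}, forces every eigenfunction of $\mu_{i+1}$ into $\ker L$; consequently $\mu_{i+1}$ is not an active pole of $g$, so $g$ is continuous there and the left-limit yields $g(\mu_{i+1}) \le 0$, again a contradiction. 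Therefore $\kappa_i = \mu_{i+1}$.

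The hard part is the careful verification of \eqref{plan-counting} when Neumann eigenvalues have geometric multiplicity larger than one: within each such eigenspace, at most one direction fails $L = 0$ and produces an active pole of $g$, while the complementary subspace contributes inactive eigenvalues directly to the $\kappa$-spectrum, and correctly apportioning multiplicities across zeros of $g$ and inactive Neumann eigenvalues is the only nontrivial bookkeeping. Once \eqref{plan-counting} is established, the remainder is immediate from the monotonicity of $g$.
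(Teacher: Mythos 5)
Your route is viable and genuinely different from the paper's. You reduce the proposition to a global spectral-counting identity for the rank-one-constrained problem, whereas the paper's proof is a short, local min-max argument: assuming $\kappa_i<\mu_{i+1}$, it takes the $(i+1)$-dimensional trial space $\mathrm{span}\{u_c,w_1,\dots,w_i\}$, where $w_1,\dots,w_i$ are the first $i$ eigenfunctions of \eqref{eigenk}, and shows the Rayleigh quotient is strictly below $\mu_{i+1}$ on it, contradicting the min-max characterization of $\mu_{i+1}$. The two facts doing the work there are exactly the ones your hypothesis feeds in: $\int_\Omega\nabla u_c\cdot\nabla w_l\,dx=c\int_\Omega u_cw_l\,dx$ (the boundary term dies because $\partial_\nu u_c$ is constant and $\int_{\partial\Omega}w_l\,d\sigma=0$), and $\int_\Omega|\nabla u_c|^2\,dx=c\int_\Omega u_c^2\,dx-\int_{\partial\Omega}u_c\,d\sigma<c\int_\Omega u_c^2\,dx$, the strict inequality coming precisely from $\int_{\partial\Omega}u_c\,d\sigma>0$; one also needs $\kappa_l\le c\le\mu_{i+1}$, which is where the hypothesis $c\in[\kappa_i,\mu_{i+1}]$ enters. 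What your approach buys is a complete picture of the function $g(c)=\sum_k b_k^2/(c-\mu_k)$ — essentially the secular function of the constrained problem — from which Theorem \ref{maintheorem}, Corollary \ref{311} and Proposition \ref{c0} would all drop out uniformly; what the paper's approach buys is brevity and the avoidance of any completeness or multiplicity bookkeeping.

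The one load-bearing step you have not actually carried out is the counting identity $N_\kappa(c)=N_\mu(c)-\mathbf{1}_{\{g(c)>0\}}$, and you correctly flag it as the hard part. It is true, and the classification of $\kappa$-eigenfunctions you sketch (zero-mean ones are Neumann eigenfunctions in $\ker L$; nonzero-mean ones rescale to solutions of \eqref{cequation}, forcing either $g(\kappa)=0$ or, via Theorem \ref{existence}, that $\kappa$ is an inactive Neumann eigenvalue) is the right skeleton. But turning that classification into an exact count with multiplicities in infinite dimensions — in particular, showing no $\kappa$-eigenvalues are missed and that $g$ has exactly one zero in each gap between consecutive active poles — requires assembling Proposition \ref{gaojiebijiao}, the strict monotonicity of $g$ on each component, and a convergence argument justifying the series representation of $g$ and $g'$ (which follows from $u_c\in H^1(\Omega)$, i.e.\ $\sum_k(1+\mu_k)b_k^2/(c-\mu_k)^2<\infty$). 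As written, your argument proves the proposition modulo that lemma; I would either supply it in full or switch to the paper's direct trial-space computation, which needs none of it.
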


As a direct consequence, we prove that $\kappa_1=\mu_2$ when $\Omega$ is a ball, a rectangular box or an equilateral triangle. This extends \cite[Theorem 2.1 and Theorem 2.5]{NP15} and is obtained by much simpler proofs. Another interesting consequence is the following, due to Szeg\"o-Weinberger inequality (see \cite{Szego} and \cite{Wein}), its quantitative version (see \cite{BP12}) and our results.
\begin{theorem}
\label{iso}
Let $\Omega$ be a bounded Lipschitz domain in $\mathbb{R}^n$. Then $\kappa_1(\Omega)\le \kappa_1(\Omega^\sharp)$, where $\Omega^\sharp$ is the ball with the same volume as that of $\Omega$. Moreover,
\begin{align}
    \label{qk1}
|B|^{2/n}\kappa_1(B)-|\Omega|^{2/n}\kappa_1(\Omega)\ge C(n)A^2(\Omega),    
\end{align}
where $B$ is a ball and  $A(\Omega)$ is the Fraenkel asymmetry of $\Omega$.
\end{theorem}

Thanks to \cite{LS09} which considers maximizing $\mu_2$ on triangles, it is readily seen that among triangles with prescribed perimeter, equilateral triangles maximize $\kappa_1$, see section 6. 

Note that by Theorem \ref{maintheorem}, for the domain $\Omega$ on which $\kappa_1<\mu_2$, the infimum of $c>0$ such that $\int_{\partial \Omega}u_c\, \rm{d} \sigma<0$ is equal to $\kappa_1$. In the case of $\kappa_1=\mu_2$, we also have some partial results on the infimum of such number $c$, see Proposition \ref{c0} in section 5.

Finally, we remark that the question of fully classifying domains on which $\kappa_1=\mu_2$ still remains open. Even if Theorem \ref{maintheorem} gives an if and only if condition, it is still not completely satisfactory, as it does not give concrete geometric features of such domains. Nevertheless, the usefulness of Theorem \ref{maintheorem} lies in three aspects. First, it can be used to theoretically determine the value of $\kappa_1$ on certain domains beyond rectangular boxes. Second, it leads to the fact that $\kappa_1=\mu_2$ can be served as a necessary condition for domains on which \eqref{zheng2} holds. Third, directly solving the value $\kappa_1$ by numerical methods is not so easy, while numerically solving \eqref{cequation} is standard, and thus Theorem \ref{maintheorem} can be used by numerical analysis to determine whether or not $\kappa_1=\mu_2$ on a given domain.

\subsection{Concentration breaking on a thermal insulation problem}

As an application of the results we obtained above, we can extend the symmetry breaking results on a thermal insulation problem discovered by Bucur, Buttazzo and Nitsch in \cite{BBN}. Let us first recall their main result.
\begingroup
\renewcommand{\thetheorem}{\Alph{theorem}}
\begin{theorem}\emph{(\cite[Theorem 3.1]{BBN})}
\label{gongming}
Let $\Omega$ be a ball, $m>0$ and $u_m$ be a solution to 
\begin{align}
\label{2problem}
    \min\left\{\frac{\int_{\Omega}|\nabla u|^2\,dx+\frac{1}{m}\left(\int_{\partial \Omega}|u|\,\rm{d} \sigma\right)^2}{\int_{\Omega}u^2\,dx}:u\in H^1(\Omega)\setminus \{0\}\right\},
\end{align}
then there exists $m_0>0$ such that when $m>m_0$, $u_m$ is radial, while when $m<m_0$, $u_m$ is not radial.
\end{theorem}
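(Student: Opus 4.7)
The plan is to exploit the spherical symmetry of the ball via a spherical-harmonic decomposition, reducing the minimization \eqref{2problem} to a comparison between a one-dimensional radial problem and the second Neumann eigenvalue $\mu_2$. Write $J_m(u)$ for the Rayleigh-type functional in \eqref{2problem} and $\lambda(m) := \inf J_m$. Since $J_m$ is scale-invariant, existence of a minimizer $u_m \in H^1(\Omega)\setminus\{0\}$ follows from the direct method applied in the class $\{\int_\Omega u^2 = 1\}$; weak $H^1$-compactness and trace compactness handle the lower semicontinuity of all three ingredients. Replacing $u_m$ by $|u_m|$ and then rescaling so that $\int_{\partial \Omega}u_m\, d\sigma = m$, the Euler-Lagrange equation becomes exactly \eqref{cequation} with $c = \lambda(m)$, which places $u_m$ on the same footing as the solutions $u_c$ studied throughout the paper.

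On the ball $B = B_R$, I would write any admissible $u\ge 0$ as $u(r,\theta) = \sum_{k\ge 0} u_k(r) Y_k(\theta)$ in spherical harmonics. Only the radial mode $u_0$ contributes to $\int_{\partial B} u\, d\sigma$, since $\int_{\mathbb{S}^{n-1}}Y_k\, d\sigma = 0$ for $k\ge 1$, while the Dirichlet energy and the $L^2$ norm both diagonalize. Because the first nontrivial Neumann eigenfunction on the ball sits in the $k=1$ sector, each mode $k\ge 1$ obeys $\int_B |\nabla(u_k Y_k)|^2 \ge \mu_2 \int_B (u_k Y_k)^2$. Setting $S := \sum_{k\ge 1}\|u_k Y_k\|_{L^2}^2$ and writing $J_m^{\mathrm{rad}}(u_0)$ for the Rayleigh quotient of the radial profile alone, the decomposition yields
\begin{equation*}
J_m(u) \;\ge\; \frac{J_m^{\mathrm{rad}}(u_0)\,\|u_0\|_{L^2}^2 + \mu_2\, S}{\|u_0\|_{L^2}^2 + S},
\end{equation*}
a linear-fractional function of $S$ whose direction of monotonicity is controlled by the sign of $J_m^{\mathrm{rad}}(u_0) - \mu_2$. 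Consequently $J_m(u) \ge \min\{J_m^{\mathrm{rad}}(u_0),\mu_2\} \ge \min\{\lambda_{\mathrm{rad}}(m),\mu_2\}$, where $\lambda_{\mathrm{rad}}(m) := \inf J_m^{\mathrm{rad}}$ is the radial analog of $\lambda(m)$.

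Finally, I would analyze $\lambda_{\mathrm{rad}}$ via the radial ODE form of \eqref{cequation}, whose solutions are explicit Bessel functions. One verifies (i) $\lambda_{\mathrm{rad}}(m) \to 0$ as $m\to \infty$, since constants become admissible competitors; (ii) $\lambda_{\mathrm{rad}}(m) \to \lambda_1^{\mathrm{Dir}}(B)$ as $m\to 0^+$, because the penalty forces the trace of $u_0$ to vanish and the first Dirichlet mode on the ball is radial; and (iii) $\lambda_1^{\mathrm{Dir}}(B) > \mu_2(B)$, which is read off from explicit Bessel-zero comparisons. Continuity and strict monotonicity of $\lambda_{\mathrm{rad}}$ then produce a unique $m_0 > 0$ with $\lambda_{\mathrm{rad}}(m_0) = \mu_2$. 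The dichotomy is then immediate: for $m > m_0$, the preceding bound gives $J_m(u) \ge \lambda_{\mathrm{rad}}(m)$ with equality forcing $S = 0$, so $u_m$ is radial; for $m < m_0$, any $\mu_2$-eigenfunction $v$ (which is non-radial on the ball and has vanishing boundary integral) satisfies $J_m(v) = \mu_2 < \lambda_{\mathrm{rad}}(m)$, so $u_m$ cannot be radial. The main obstacle will be the Bessel-function analysis needed to establish strict monotonicity of $\lambda_{\mathrm{rad}}(m)$ together with its endpoint limits; this can be carried out by techniques parallel to those developed for Theorem \ref{zheng1}.
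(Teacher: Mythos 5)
Your radial half is essentially sound, and is close in spirit to the Bucur--Buttazzo--Nitsch argument (note the paper itself does not prove Theorem \ref{gongming}; it is quoted from \cite{BBN}, so the only internal comparison is with the proof of Theorem \ref{q2main}, which generalizes the symmetry-breaking half). For $u\ge 0$ the identity $\int_{\partial B}|u|\,d\sigma=\int_{\partial B}u_0Y_0\,d\sigma$, the diagonalization of the two quadratic forms, and the bound $\int_B|\nabla(u_kY_k)|^2\ge\mu_2\int_B(u_kY_k)^2$ for $k\ge1$ (which follows simply from $\int_B u_kY_k\,dx=0$ and the variational characterization of $\mu_2$) do give $J_m(u)\ge\min\{\lambda_{\mathrm{rad}}(m),\mu_2\}$, with equality forcing $S=0$ once $\lambda_{\mathrm{rad}}(m)<\mu_2$; the deferred continuity/monotonicity of $\lambda_{\mathrm{rad}}$ and its endpoint limits can be handled as in Theorem \ref{main2} and Proposition \ref{ballmotivation}.

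The symmetry-breaking half contains a genuine error. You claim a $\mu_2$-eigenfunction $v$ satisfies $J_m(v)=\mu_2$ because it "has vanishing boundary integral". But \eqref{2problem} penalizes $\bigl(\int_{\partial\Omega}|u|\,d\sigma\bigr)^2$ --- the integral of the absolute value, not the absolute value of the integral. On the ball $v=g(r)Y_1(\theta)$ changes sign on $\partial B$, so $\int_{\partial B}|v|\,d\sigma>0$ and $J_m(v)=\mu_2+\tfrac{1}{m}\bigl(\int_{\partial B}|v|\,d\sigma\bigr)^2/\int_B v^2\,dx>\mu_2$, which in fact blows up as $m\to0^+$; this competitor never beats the radial profile. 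The correct competitor is the perturbation $u^{\mathrm{rad}}_m+\epsilon v$ of the radial minimizer: since $u^{\mathrm{rad}}_m>0$ on $\partial B$ and $\int_{\partial B}v\,d\sigma=0$, for small $\epsilon$ the boundary term is unchanged, and since $\int_B u^{\mathrm{rad}}_m v\,dx=0$ and $\int_B\nabla u^{\mathrm{rad}}_m\cdot\nabla v\,dx=0$ one gets
\begin{equation*}
J_m(u^{\mathrm{rad}}_m+\epsilon v)=\frac{\lambda_{\mathrm{rad}}(m)\|u^{\mathrm{rad}}_m\|_{L^2}^2+\epsilon^2\mu_2\|v\|_{L^2}^2}{\|u^{\mathrm{rad}}_m\|_{L^2}^2+\epsilon^2\|v\|_{L^2}^2}<\lambda_{\mathrm{rad}}(m)
\end{equation*}
precisely when $\lambda_{\mathrm{rad}}(m)>\mu_2$, i.e.\ when $m<m_0$; this is the computation carried out in the proof of Theorem \ref{q2main}(1). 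A secondary caveat: your assertion that the Euler--Lagrange equation of \eqref{2problem} "becomes exactly \eqref{cequation}" holds only when the minimizer is strictly positive on $\partial\Omega$; where the trace vanishes the boundary condition degenerates into a variational inequality, which is exactly why the vanishing set on $\partial\Omega$ is the object one must track in this problem.
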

\endgroup

For the mathematical modeling and some other related mathematical results, we refer to \cite{BBN1}, \cite{Buttazzo}  \cite{HLL}, \cite{HLL2}, \cite{PNST}, \cite{PNT}, etc. Physically, \eqref{2problem} is for minimizing temperature decay, and Theorem \ref{gongming} roughly says that for a round thermal body $\Omega$, when the total amount of material is larger than some threshold $m_0$, the insulating material should uniformly cover $\partial \Omega$, while below $m_0$, it is better to design a non-uniform cover. 

We interpret Theorem \ref{gongming} as a concentration breaking result, since essentially the authors show that when $m>m_0$, $u_m$ must be nonvanishing everywhere on the boundary, which is equivalent to its radial symmetry, and when $m<m_0$, $u_m$ must vanish on a subset of $\partial \Omega$ with positive $\mathscr{H}^{n-1}$ measure, and thus is not radial. The latter fact is due to perturbation by second Neumann Laplacian eigenfunctions, while the non-vanishing of $u_m$ on $\partial \Omega$ when $m>m_0$ is due to spherical symmetrization and thus highly relies on the assumption that $\Omega$ is a ball. 

When $\Omega$ is not a ball, by studying \eqref{cequation}, we extend the concentration breaking phenomenon from ball domains to wider class of symmetric domains including both ball domains and rectangular boxs. The class is given as follows:
\begin{align}
    \label{F}
\mathcal{F}=\{\mbox{$\Omega$ is a bounded Lipschitz domain in $\mathbb{R}^n$}: u_c>0 \, \mbox{on $\partial \Omega$ for any $c \in (0,\mu_2)$}\},
\end{align}where $u_c$ is the solution to (1.2).

On concentration breaking, so far our results can be summarized as below.
\begin{theorem}
\label{q2main}
Let $\Omega$ be a bounded Lipschitz domain and $u_m$ be a solution to \eqref{2problem}.
\begin{enumerate}
    \item There always exists $m_0>0$ such that when $m \in (0,m_0)$, $|u_m|=0$ on a subset of $\partial \Omega$ with positive $\mathscr{H}^{n-1}$ measure.
    \item If furthermore $\Omega \in \mathcal{F}$, then the above $m_0$ can be chosen such that when $m \in (m_0,\infty)$, $|u_m|>0$ on $\partial \Omega$.
    \item If $\Omega$ is convex or $C^1$, then there always exists $m_0'>0$ such that when $m \in (m_0',\infty)$, $|u_m|>0$ on $\partial \Omega$.
\end{enumerate}
\end{theorem}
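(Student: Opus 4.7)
The plan rests on analyzing the Euler--Lagrange system satisfied by a nonnegative minimizer $u_m$ of \eqref{2problem} and relating it to \eqref{cequation}. Setting $F_m(u)=\int_\Omega|\nabla u|^2\,dx+(1/m)(\int_{\partial\Omega}|u|\,d\sigma)^2$, $G(u)=\int_\Omega u^2\,dx$, and $\lambda(m)=F_m(u_m)/G(u_m)$, elementary arguments show that $\lambda(m)$ is strictly decreasing, with $\lambda(m)\to\lambda_1^D(\Omega)$ as $m\to 0^+$ and $\lambda(m)\to 0$ as $m\to\infty$. Whenever $u_m>0$ a.e.\ on $\partial\Omega$, the first variation yields $-\Delta u_m=\lambda(m)u_m$ in $\Omega$ with $\partial_\nu u_m=-(1/m)\int_{\partial\Omega}u_m\,d\sigma$ on $\partial\Omega$; hence $u_m$ is a positive multiple of the solution $u_c$ of \eqref{cequation} for $c=\lambda(m)$, and the normalization forces $m=\int_{\partial\Omega}u_c\,d\sigma$.

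\emph{Part (1).} Under the tentative hypothesis $u_m>0$ a.e.\ on $\partial\Omega$, the nonnegativity of the second variation of $F_m/G$ at $u_m$ gives
\[\int_\Omega|\nabla v|^2\,dx+\frac{1}{m}\Big(\int_{\partial\Omega}v\,d\sigma\Big)^2\ge\lambda(m)\int_\Omega v^2\,dx\qquad \forall\,v\in H^1(\Omega).\]
Testing with the principal eigenfunction $w_1$ of \eqref{eigenk}, which satisfies $\int_{\partial\Omega}w_1\,d\sigma=0$ and $\int_\Omega|\nabla w_1|^2=\kappa_1\int_\Omega w_1^2$, yields $\lambda(m)\le\kappa_1$. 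Combined with $\kappa_1\le\mu_2$ (Theorem \ref{maintheorem}) and the strict Filonov inequality $\mu_2<\lambda_1^D$, this contradicts $\lambda(m)\to\lambda_1^D$ as $m\to 0^+$. Hence for all $m$ below the threshold $m_0:=\lambda^{-1}(\kappa_1)>0$, $u_m$ must vanish on a subset of $\partial\Omega$ with positive $\mathscr{H}^{n-1}$-measure.

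\emph{Part (2).} The hypothesis $\Omega\in\mathcal{F}$ provides, for every $c\in(0,\mu_2)$, a solution $u_c$ of \eqref{cequation} with $u_c>0$ on $\partial\Omega$, whence $u_c>0$ on $\overline\Omega$ by the maximum principle. Integration by parts gives $F_m(u_c)/G(u_c)=c$ as soon as $m=\int_{\partial\Omega}u_c$. I then identify $u_c$ as the principal eigenfunction of the self-adjoint nonlocal problem $-\Delta u=\lambda u$, $\partial_\nu u=-(1/m)\int_{\partial\Omega}u$, whose first eigenvalue $\tilde\lambda(m)$ is a lower bound for $\lambda(m)$, with equality precisely when the principal eigenfunction has constant sign on $\partial\Omega$; this is our setting, so $\lambda(m)=c$ and any minimizer is a positive multiple of $u_c$, in particular strictly positive on $\partial\Omega$. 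Since $c\mapsto m(c)=\int_{\partial\Omega}u_c$ is strictly decreasing on $(0,\mu_2)$ with $m(c)\to+\infty$ as $c\to 0^+$ (by Theorem \ref{zheng1}), the image covers a half-line $(m_0,\infty)$, which gives the conclusion.

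\emph{Part (3).} The aim is to repeat the Part (2) construction for $c$ in a smaller interval $(0,c_*)$. From the Neumann-basis expansion
\[u_c=\frac{P(\Omega)}{c|\Omega|}+\sum_{k\ge 2}\frac{\int_{\partial\Omega}v_k\,d\sigma}{c-\mu_k}\,v_k,\]
the constant term $P/(c|\Omega|)$ blows up as $c\to 0^+$. For convex or $C^1$ domains, elliptic regularity up to the boundary (afforded by the interior ball condition or local $C^1$ charts) gives $u_c\in C(\overline\Omega)$ with a uniform bound on the Fourier remainder in $C(\overline\Omega)$; consequently $u_c\to+\infty$ in $C(\overline\Omega)$ as $c\to 0^+$, so $u_c$ is pointwise positive on $\partial\Omega$ for all sufficiently small $c$. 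The Part (2) argument then identifies $u_m$ as a positive multiple of $u_c$ on the corresponding range $m\in(m_0',\infty)$. The main technical hurdle, and the source of the convexity or $C^1$ hypothesis, is upgrading almost-everywhere positivity of $u_c$ on $\partial\Omega$ to pointwise positivity, which requires continuity of $u_c$ up to the boundary; this may fail at corner singularities of a more general Lipschitz domain.
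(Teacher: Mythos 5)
Your proposal is correct and follows essentially the same route as the paper: part (1) is the paper's perturbation of $u_m$ by the $\kappa_1$-eigenfunction $w_1$ (you phrase it as a second-variation inequality, the paper computes $R_m(u_m+\epsilon w)$ directly), and parts (2)–(3) rest on the same comparison between $\lambda_m$ and the auxiliary quotient without the absolute value, identifying its minimizer with $u_c$ and invoking positivity of $u_c$ (from $\mathcal F$, or from boundary continuity for small $c$ on convex/$C^1$ domains). The only slip is attributing the boundary regularity in part (3) to an interior ball condition — convex domains need not satisfy one at corners; the correct input is the H\"older estimate for the Neumann problem on convex or $C^1$ domains cited in the paper's Remark \ref{strictlypositive}.
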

Note that (1) and (3) in Theorem \ref{q2main} address arbitrary domains, and (2) deals specifically with domains in the class $\mathcal{F}$ which particularly includes balls and rectangular boxes. Numerical results in section 8 indicate that planar elliptic domains are also in this class, while any triangular domain is not. It would be very interesting to rigorously prove this numerical result and then give further common features of domains in $\mathcal{F}$.

Last, we remark that when $\Omega$ is a ball, the exact value of breaking threshold $m_0$ is obtained in \cite{HLL} via implicit and rather complicated argument.  As a consequence of Theorem \ref{zheng1} and Theorem \ref{zheng2}, we give quantitative estimates for the breaking threshold $m_0$ for balls and rectangular boxes in a more direct way, and it turns out that for such domains, $m_0$ has the sharp lower bound given by $\tfrac{(n-1)P^2(\Omega)}{n\mu_2(\Omega)|\Omega|}$, see Corollaries \ref{quan1}-\ref{quan2}. 

\medskip

\subsection*{Acknowledgment} We would like to thank Professor Dorin Bucur and Professor Antoine Henrot for very helpful comments. We would also like to express our gratitude to the anonymous reviewers for their numerous helpful comments, which have significantly improved the present paper.

\medskip

\subsection*{Outline of the Paper}

The organization of the paper is as follows. In section 2, we prove some preliminary  properties on eigenvalues of boundary mean zero Laplacian with constant Neumann data, and we also prove Theorem \ref{existence}. In section 3, we study \eqref{cequation} on balls and also prove Theorem \ref{zheng1}. In section 4, we study \eqref{cequation} on rectangular boxes and prove Theorem \ref{zheng2}. In section 5, we prove Theorem \ref{maintheorem}, Proposition \ref{byproduct1}, Proposition \ref{gensufintro}, Theorem \ref{iso} and some other related miscellaneous results. In section 6, we study \eqref{cequation} on triangles and also prove isoperimetric results. In section 7, we prove Theorem \ref{q2main} and also obtain quantitative results on breaking threshold when $\Omega$ is a ball or rectangular box. In section 8, we provide numerical results on the boundary behaviors of solutions to \eqref{cequation} on some other symmetric domains. In section 9, we give counterexamples to the converse of the first statement of Proposition \ref{byproduct1}.

\section{eigenvalue comparison and solvability of \eqref{cequation}}
In this section, we first prove comparison results on Neumann Laplacian eigenvalues and the eigenvalues of the boundary mean zero Laplacian with constant Neumann data.

We define 
\begin{align}
    \label{W}
W(\Omega):=\Big\{w \in H^1(\Omega): \int_{\partial \Omega}w\, \rm{d} \sigma=0\Big\}.
\end{align}
Then the variational characterization of the $i$-th nonzero eigenvalue of boundary mean zero Laplacian with constant Neumann data is given as
\begin{align}
    \label{kappa_i}
\kappa_i(\Omega):=\inf\Big\{\frac{\int_{\Omega}|\nabla w|^2 \, dx}{\int_{\Omega}w^2 \, dx }: w \in W(\Omega) \setminus\{0\},\, u \perp \{w_1,\cdots,w_{i-1}\}\Big\}.    
\end{align}
In the above $w_1,\cdots,w_{i-1}\in W(\Omega)$ are eigenfunctions of the first $(i-1)$ eigenvalues of the boundary mean zero Laplacian under constant Neumann data, and the perpendicularity is in the sense of $L^2$ scalar product. For each eigenvalue $\kappa_i$, it is easy to check by the variational principle that the corresponding eigenfunction $w_i$ satisfies 
\begin{align*}
    \begin{cases}
    -\Delta w_i=\kappa_i w_i\quad &\mbox{in $\Omega$}\\
    \frac{\partial w_i}{\partial \nu}=-\frac{\kappa_i}{P(\Omega)}\int_{\Omega}w_i\,dx \quad &\mbox{on $\partial \Omega$}\\
    \int_{\partial \Omega}w_i\, \rm{d} \sigma=0.
    \end{cases}
\end{align*}
Also, similar to the Dirichlet or Neumann eigenvalue problem, one can easily verify that the min-max principle also holds in this new eigenvalue problem. That is,
\begin{align}
    \label{minmaxforwproblem}
\kappa_i(\Omega)=\min_{L \subset W(\Omega),\rm{dim} L=i}\,\max_{u \in L\setminus \{0\}}\frac{\int_{\Omega}|\nabla u|^2 \, dx}{\int_{\Omega}u^2\,dx}.    
\end{align}
Let $0=\mu_1<\mu_2\le\mu_3\le \cdots\le \mu_i\rightarrow \infty$ be the Neumann Laplacian eigenvalues, then directly from the min-max principle, we have $\mu_i \le \kappa_i$. Hence $\kappa_i \rightarrow \infty$ as $i \rightarrow \infty$. Let $v_i$ be the Neumann-Laplacian eigenfunction corresponding to $\mu_i$, and let $L=span\{v_2,\cdots, v_{i+1}\}$. For each $2 \le k\le i+1$, let $\bar{v}_k=v_k-\frac{1}{P(\Omega)}\int_{\partial \Omega}v_k\,\rm{d} \sigma$ and $\bar{L}=span\{\bar{v}_2,\cdots, \bar{v}_{i+1}\}$. Since $\bar{v}_k \in W(\Omega)$ and they are linearly independent, $\bar{L}\subset W(\Omega)$ and $dim \bar{L}=i$. Since
	\begin{align*}
		\int_{\Omega}|\nabla (\sum_{l=2}^{i+1} a_{l}\bar{v}_{l})|^2 \, dx=\sum_{l=2}^{i+1}a_{l}^2\int_{\Omega}|\nabla  v_{l}|^2\, dx
		\le  \mu_{i+1}\sum_{l=2}^{i+1}a_{l}^2\int_{\Omega} v_{l}^2\,dx=\mu_{i+1}\int_{\Omega}(\sum_{l=2}^{i+1}a_{l}  v_{l})^2\, dx
	\end{align*}and
	\begin{align*}
		\int_{\Omega}(\sum_{l=2}^{i+1}a_{l} \bar{v}_{l})^2\, dx=\int_{\Omega}(\sum_{l=2}^{i+1}a_{l}  v_{l})^2\, dx+|\Omega|\left(\sum_{l=2}^{i+1} \frac{a_{l}}{P(\Omega)}\int_{\partial \Omega} v_{l}\,\rm{d} \sigma\right)^2,
	\end{align*}
we get that for any $u\in \bar{L}\setminus \{0\}$,
	\begin{align*}
		\frac{\int_{\Omega}|\nabla u|^2 \, dx}{\int_{\Omega}u^2\,dx} \le \mu_{i+1}.
	\end{align*}
	Hence by the min-max principle, we have that $\kappa_i \le \mu_{i+1}$.

In summary, we have proved:
\begin{proposition}
\label{gaojiebijiao}
Let $\kappa_1\le \kappa_2 \le \cdots \le \kappa_i \rightarrow \infty$ be the eigenvalues (counting multiplicity) of the boundary mean zero Laplacian with constant Neumann data. Let $0=\mu_1<\mu_2\le\mu_3\le \cdots\le \mu_i\rightarrow \infty$ be the Neumann Laplacian eigenvalues. Then for each $i \ge 1$,  $\mu_i \le \kappa_i \le \mu_{i+1}$.
\end{proposition}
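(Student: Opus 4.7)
The plan is to use the variational (min-max) characterizations of both sets of eigenvalues, treating the two inequalities $\mu_i \le \kappa_i$ and $\kappa_i \le \mu_{i+1}$ separately.

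For the lower bound $\mu_i \le \kappa_i$, the argument will be immediate from the inclusion $W(\Omega) \subset H^1(\Omega)$. The min-max principle for Neumann Laplacian eigenvalues reads $\mu_i = \min_L \max_{u \in L\setminus \{0\}} \int_\Omega |\nabla u|^2\, dx\, /\, \int_\Omega u^2\, dx$, where $L$ ranges over all $i$-dimensional subspaces of $H^1(\Omega)$. Since the formula \eqref{minmaxforwproblem} for $\kappa_i$ restricts $L$ to lie in $W(\Omega)$, a smaller class, the infimum cannot decrease; thus $\kappa_i \ge \mu_i$.

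For the upper bound $\kappa_i \le \mu_{i+1}$, I would exhibit a concrete $i$-dimensional trial subspace of $W(\Omega)$ whose Rayleigh quotient is everywhere bounded by $\mu_{i+1}$. Let $\{v_2,\ldots,v_{i+1}\}$ be $L^2(\Omega)$-orthonormal Neumann eigenfunctions corresponding to $\mu_2,\ldots,\mu_{i+1}$; since $v_1$ is a nonzero constant and the Neumann modes are pairwise $L^2$-orthogonal, each $v_{l+1}$ with $l\ge 1$ satisfies $\int_\Omega v_{l+1}\, dx=0$. Define $\bar v_{l+1} := v_{l+1} - c_{l+1}$ with $c_{l+1} = \frac{1}{P(\Omega)}\int_{\partial \Omega} v_{l+1}\, d\sigma$, so that each $\bar v_{l+1}$ lies in $W(\Omega)$, and set $\bar L := \mathrm{span}\{\bar v_2,\ldots,\bar v_{i+1}\}$. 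A short linear-independence check---if $\sum_l a_l \bar v_{l+1}=0$ then $\sum_l a_l v_{l+1}$ equals a constant, which integrates against $1$ over $\Omega$ to force that constant to vanish, and then $L^2$-independence of the $v_{l+1}$'s yields $a_l=0$---shows $\dim \bar L = i$.

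The verification that $R(u) \le \mu_{i+1}$ for every $u = \sum_{l=1}^i a_l \bar v_{l+1} \in \bar L$ then splits into two estimates. Since constants have vanishing gradient, $\int_\Omega |\nabla u|^2\, dx = \sum_l a_l^2 \mu_{l+1} \int_\Omega v_{l+1}^2\, dx \le \mu_{i+1} \sum_l a_l^2 \int_\Omega v_{l+1}^2\, dx$; on the other hand, expanding the square and using $\int_\Omega v_{l+1}\, dx=0$ to drop the cross terms gives $\int_\Omega u^2\, dx = \int_\Omega (\sum_l a_l v_{l+1})^2\, dx + |\Omega|(\sum_l a_l c_{l+1})^2 \ge \sum_l a_l^2 \int_\Omega v_{l+1}^2\, dx$. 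Dividing yields $R(u)\le \mu_{i+1}$, and the min-max \eqref{minmaxforwproblem} applied to $\bar L$ delivers $\kappa_i \le \mu_{i+1}$. The main delicate point, though minor, is verifying the linear independence of the $\bar v_{l+1}$ in $W(\Omega)$ and carefully handling the $|\Omega|(\sum_l a_l c_{l+1})^2$ correction term in the $L^2$-norm estimate; once these are in place, the proposition follows as a clean two-sided min-max comparison.
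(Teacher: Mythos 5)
Your proposal is correct and follows essentially the same route as the paper: the lower bound via the min-max principle restricted to the smaller class $W(\Omega)$, and the upper bound via the trial subspace spanned by $\bar v_{l+1}=v_{l+1}-\frac{1}{P(\Omega)}\int_{\partial\Omega}v_{l+1}\,d\sigma$ together with the same two Rayleigh-quotient estimates. Your version is in fact slightly more careful than the paper's, which omits the $|\Omega|$ factor in the $L^2$-norm expansion and asserts linear independence of the $\bar v_{l+1}$ without proof.
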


The following corollary is immediate from the above proposition.
\begin{corollary}
The eigenvalues $\kappa_i$ satisfy the same Weyl's law as for Neumann Laplacian eigenvalues.
\end{corollary}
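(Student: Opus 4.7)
The plan is to view this as a direct sandwich argument. Weyl's law for the Neumann Laplacian on a bounded Lipschitz domain $\Omega \subset \mathbb{R}^n$ asserts
\begin{align*}
\mu_i \sim \frac{4\pi^2}{\omega_n^{2/n}}\left(\frac{i}{|\Omega|}\right)^{2/n} \quad \text{as } i \to \infty,
\end{align*}
where $\omega_n$ denotes the volume of the unit ball in $\mathbb{R}^n$. The goal is to show that $\kappa_i$ obeys the identical asymptotics, and the only input needed beyond this classical fact is the two-sided bound $\mu_i \le \kappa_i \le \mu_{i+1}$ already established in Proposition \ref{gaojiebijiao}.

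First I would invoke this sandwich directly: for every $i \ge 1$,
\begin{align*}
\frac{\mu_i}{i^{2/n}} \;\le\; \frac{\kappa_i}{i^{2/n}} \;\le\; \frac{\mu_{i+1}}{(i+1)^{2/n}}\cdot\left(\frac{i+1}{i}\right)^{2/n}.
\end{align*}
By Weyl's law applied to both $\mu_i$ and $\mu_{i+1}$, the leftmost and rightmost quantities tend to the same limit $\frac{4\pi^2}{(\omega_n |\Omega|)^{2/n}}$ as $i \to \infty$, since $(1+1/i)^{2/n}\to 1$. Hence $\kappa_i/i^{2/n}$ converges to the same constant, which is exactly Weyl's law for the sequence $\{\kappa_i\}$.

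If one prefers the counting-function formulation, I would argue equivalently as follows. Let $N_\mu(\lambda) = \#\{i: \mu_i \le \lambda\}$ and $N_\kappa(\lambda) = \#\{i: \kappa_i \le \lambda\}$. From $\mu_i \le \kappa_i \le \mu_{i+1}$ one gets the pointwise inequalities $N_\mu(\lambda) - 1 \le N_\kappa(\lambda) \le N_\mu(\lambda)$ for all $\lambda > 0$. Dividing by $\lambda^{n/2}$ and sending $\lambda \to \infty$, the standard Weyl asymptotics $N_\mu(\lambda) \sim \frac{\omega_n |\Omega|}{(2\pi)^n}\lambda^{n/2}$ transfer verbatim to $N_\kappa$.

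There is no real obstacle here: the corollary is purely a consequence of the interlacing Proposition \ref{gaojiebijiao} combined with the classical Weyl law, and no new spectral analysis is required. The only mild point to mention is that the bound $\mu_i \le \kappa_i \le \mu_{i+1}$ holds with multiplicities respected as written in the proposition, which is precisely what licenses the counting-function comparison above.
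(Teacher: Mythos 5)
Your argument is correct and is exactly what the paper intends: the corollary is stated as immediate from Proposition \ref{gaojiebijiao}, and your sandwich $\mu_i \le \kappa_i \le \mu_{i+1}$ (equivalently the counting-function comparison $N_\mu(\lambda)-1 \le N_\kappa(\lambda) \le N_\mu(\lambda)$) combined with the classical Weyl law is precisely the omitted verification. No issues.
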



Before proving Theorem \ref{existence}, we shall need the following Fredholm alternative for Neumann conditions. 
\begin{theorem}[\rm{Fredholm alternative}]
\label{Fred}
Given $\Omega$ to be a bounded Lipschitz domain in $\mathbb{R}^n$, $c>0$, and $f \in L^2(\Omega)$. Consider the nonhomogeneous eigenvalue equation:
\begin{align}
    \label{fredequation}
\begin{cases}
-\Delta u=cu+f\quad &\mbox{in $\Omega$}\\
\frac{\partial u}{\partial \nu}=0\quad &\mbox{on $\partial \Omega$}.
\end{cases}    
\end{align}
Then when $c$ does not belong to any Neumann Laplacian eigenvalues, \eqref{fredequation} admits a unique weak solution in $H^1(\Omega)$. When $c$ is a Neumann Laplacian eigenvalue, then \eqref{fredequation} admits a weak solution in $H^1(\Omega)$ if and only if $f$ is perpendicular (for the $L^2$ scalar product) to any Neumann Laplacian eigenfunction associated to $c$. 
\end{theorem}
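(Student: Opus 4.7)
The plan is to recast \eqref{fredequation} as a compact-operator equation on $L^2(\Omega)$ and then invoke the Riesz-Schauder theory for compact self-adjoint operators.

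First, I would introduce the solution operator $T: L^2(\Omega) \to L^2(\Omega)$ associated with the coercive shifted problem $-\Delta w + w = g$ with homogeneous Neumann data. For each $g \in L^2(\Omega)$, Lax-Milgram applied to the symmetric bilinear form $a(w,v) = \int_\Omega (\nabla w \cdot \nabla v + wv)\,dx$ on $H^1(\Omega)$ produces a unique $w = Tg \in H^1(\Omega)$ satisfying $a(w,v) = \int_\Omega g v\,dx$ for every $v \in H^1(\Omega)$. Because $\Omega$ is bounded Lipschitz, Rellich--Kondrachov yields the compact embedding $H^1(\Omega) \hookrightarrow L^2(\Omega)$, so $T$ is compact as an operator on $L^2(\Omega)$; the symmetry of $a$ forces $T$ to be self-adjoint on $L^2(\Omega)$.

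Next, I would observe that $u \in H^1(\Omega)$ is a weak solution of \eqref{fredequation} if and only if $u = T\bigl((c+1)u + f\bigr)$, equivalently
\begin{equation*}
(I - (c+1)T)\,u = Tf.
\end{equation*}
A direct computation, taking $v$ as test function in the weak formulation defining $T$, identifies $\ker(I - (c+1)T)$ with the Neumann eigenspace of $-\Delta$ at level $c$; in particular this kernel is nontrivial precisely when $c \in \{\mu_k\}_{k \ge 1}$.

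If $c \notin \{\mu_k\}$, then $I - (c+1)T$ is injective and, by the Fredholm alternative for $I$ minus a compact operator, surjective, producing the unique weak solution. If $c = \mu_k$, the same Fredholm alternative together with self-adjointness gives $\mathrm{Range}(I - (c+1)T) = \ker(I - (c+1)T)^\perp$ in $L^2$, so solvability amounts to requiring $Tf \perp v$ for every Neumann eigenfunction $v$ at level $c$. Using self-adjointness,
\begin{equation*}
\langle Tf, v\rangle_{L^2} = \langle f, Tv\rangle_{L^2} = \tfrac{1}{c+1}\langle f, v\rangle_{L^2},
\end{equation*}
so the compatibility condition collapses to the stated one $\langle f, v\rangle_{L^2} = 0$. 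The main effort is essentially bookkeeping; the only point demanding care is the identification $\ker(I-(c+1)T) = \ker(-\Delta_N - c)$, which is carried out by unwinding the two weak formulations against each other, and checking that the formal operator $I-(c+1)T$ on $L^2(\Omega)$ really detects the $H^1$-regular solutions of \eqref{fredequation}.
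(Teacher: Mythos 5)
Your argument is correct and complete: the reduction of \eqref{fredequation} to the equation $(I-(c+1)T)u=Tf$ for the compact self-adjoint resolvent $T$ of $-\Delta+1$ with Neumann data, the identification of $\ker(I-(c+1)T)$ with the Neumann eigenspace at level $c$, and the computation $\langle Tf,v\rangle_{L^2}=\frac{1}{c+1}\langle f,v\rangle_{L^2}$ together give exactly the stated alternative. The paper does not write out a proof; it only sketches one and indicates a slightly different technical route: rather than shifting to the coercive form $\int_\Omega(\nabla w\cdot\nabla v+wv)\,dx$ on all of $H^1(\Omega)$, it works on the subspace $\tilde H(\Omega)$ of $H^1$-functions with vanishing mean over $\Omega$, where the unshifted Dirichlet form is already coercive by Poincar\'e--Wirtinger, and then recovers the weak formulation against arbitrary test functions in $H^1(\Omega)$ by adding a suitable constant to the solution. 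Your shift avoids that final extension step and keeps everything on $H^1(\Omega)$, at the mild cost of carrying the factor $c+1$ through the kernel identification and the orthogonality computation; both routes rest on the same ingredients (Lax--Milgram, Rellich--Kondrachov compactness for bounded Lipschitz domains, and the Riesz--Schauder alternative for a compact self-adjoint operator), so the difference is essentially bookkeeping and either version would serve the paper's purposes.
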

The statement of Theorem \ref{Fred} is similar to the Fredholm alternative for Dirichlet boundary conditions, which is well known and can be found for example in \cite{CH} and \cite{Henrot2}. We have not found a reference stating Fredholm alternative for Neumann conditions, but standard argument as in \cite{Evans} can prove Theorem \ref{Fred}, by working on the Hilbert space $\tilde{H}(\Omega)$ which is the set of functions in $H^1(\Omega)$ with vanishing mean over $\Omega$, and by exact same argument in \cite[Section 6.2]{Evans}, one can consider a weak solution $u \in \tilde{H}(\Omega)$ satisfying the corresponding integral equation for any test function in $\tilde{H}(\Omega)$, and by adding a constant to $u$, one can show that the integral equation is true for all test functions in $H^1(\Omega)$, and thus we find a weak solution in $H^1(\Omega)$. Here we omit the details of the proof since they are standard.

Throughout the following, by saying a solution to \eqref{cequation}, we mean weak solution.

Now let us prove Theorem \ref{existence}.
\begin{proof}[Proof of Theorem \ref{existence}]
Let $c=\mu_k$, and we first show that \eqref{omegaproperty} is a necessary condition for the existence of solution to \eqref{cequation}. Indeed, let $v$ be a Neumann Laplacian eigenfunction of $\mu_k$, then from the second Green identity
\begin{align*}
    \int_{\Omega}(\Delta u_c v-\Delta v u_c)\, dx=\int_{\partial \Omega} \left(\frac{\partial u_c}{\partial \nu} v-\frac{\partial v}{\partial \nu}u_c\right)\, \rm{d} \sigma,
\end{align*}we derive \eqref{omegaproperty}.

The sufficiency of \eqref{omegaproperty} is a consequence of Fredholm alternative. The Rieze representation theorem gives the existence of $f \in H^1(\Omega)$ such that 
\begin{align*}
    \begin{cases}
    -\Delta f=\frac{P(\Omega)}{|\Omega|}\quad &\mbox{in $\Omega$}\\
    \frac{\partial f}{\partial \nu}=-1 \quad &\mbox{on $\partial \Omega$}.
    \end{cases}
\end{align*}
Hence it suffices to prove that for such $f$, the equation
\begin{align*}
    \begin{cases}
    -\Delta u=\mu_k u+\mu_kf-\frac{P(\Omega)}{|\Omega|}\quad &\mbox{in $\Omega$}\\
    \frac{\partial u}{\partial \nu}=0 \quad &\mbox{on $\partial \Omega$}
    \end{cases}
\end{align*}admits a solution. By Theorem \ref{Fred}, it suffices to verify $\mu_k f-\frac{P(\Omega)}{|\Omega|}$ is perpendicular to any Neumann Laplacian eigenfunction $v$ of $\mu_k$, and it suffices to show $f \perp v$, which is again a consequence of second Green's identity and $\eqref{omegaproperty}$.
\end{proof}

Thanks to Theorem \ref{existence}, and due to Dirichlet and Neumann eigenvalue comparison results and the seminal work of Laugesen-Siudeja \cite{LS10}, the next corollary gives some common symmetric domains on which \eqref{cequation} admits a solution when $c=\mu_2$.

\begin{corollary}
Let $c=\mu_2$. Then \eqref{cequation} is solvable when $\Omega$ is an isosceles triangle with aperture greater than or equal to $\frac{\pi}{3}$, or $\Omega$ is a convex domain with two axes of symmetry.
\end{corollary}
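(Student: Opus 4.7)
The plan is to apply Theorem \ref{existence} with $k = 2$, which reduces the corollary to verifying condition \eqref{omegaproperty}: namely, that $\int_{\partial\Omega} v_2 \, d\sigma = 0$ for every Neumann Laplacian eigenfunction $v_2$ associated with $\mu_2$.

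For an isosceles triangle with aperture strictly greater than $\pi/3$, the result of Laugesen--Siudeja \cite{LS10} (quoted in the discussion following Theorem \ref{existence}) asserts that every eigenfunction of $\mu_2$ is antisymmetric with respect to the triangle's axis of symmetry, so the boundary integral vanishes by the odd symmetry of the integrand. For a convex domain with two axes of symmetry, \cite{Friedlander} ensures antisymmetry of every $v_2$ with respect to at least one of the two axes, and the boundary integral vanishes for the same reason.

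The delicate remaining subcase is the equilateral triangle (aperture exactly $\pi/3$), where $\mu_2$ has multiplicity two and the eigenspace $V$ may contain modes that are symmetric with respect to each altitude. Here I would exploit the rotation $R$ by $2\pi/3$, which is a symmetry of $\Omega$: since $R$ preserves $\partial\Omega$ together with its surface measure, the linear functional $L(v) := \int_{\partial\Omega} v \, d\sigma$ on $V$ satisfies $L(v) = L(Rv) = L(R^2 v)$, and averaging yields $3L(v) = L(v + Rv + R^2 v)$. Since $V$ realizes the two-dimensional irreducible representation of the symmetry group $D_3$ of $\Omega$ (a standard fact about the second Neumann eigenspace on the equilateral triangle), $R$ acts on $V$ as a planar rotation by $2\pi/3$, so $I + R + R^2 \equiv 0$ on $V$ and hence $L \equiv 0$. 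This gives \eqref{omegaproperty} for $k = 2$, and Theorem \ref{existence} then completes the proof.

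The main obstacle is precisely this equilateral subcase, because once symmetric modes are admitted the antisymmetry arguments used in the other cases break down. The rotational averaging above, together with the knowledge that $V$ carries the nontrivial two-dimensional irreducible $D_3$-representation (so that $R$ has no nonzero invariant vector in $V$), is what rescues this subcase.
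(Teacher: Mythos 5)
Your proposal is correct, and for the two non-equilateral cases it matches the paper's route: both reduce to verifying \eqref{omegaproperty} for $k=2$ via Theorem \ref{existence}, and both invoke Laugesen--Siudeja \cite{LS10} for super-equilateral isosceles triangles. For the convex domain with two axes of symmetry you simply cite \cite{Friedlander} for the antisymmetry, whereas the paper actually proves it: from $\mu_2<\lambda_D$ the nodal line of a $\mu_2$-eigenfunction must touch the boundary, Courant's theorem gives exactly two nodal domains, so no eigenfunction can be even with respect to both axes and the eigenspace is spanned by odd modes. Your citation is consistent with how the paper's introduction attributes this fact, but note the precise statement you need is that the eigenspace is \emph{spanned} by functions odd about one of the axes (a general eigenfunction in a multiple eigenspace need not itself be antisymmetric about either axis); since the boundary-mean functional $L$ is linear, vanishing on a spanning set suffices. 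The genuinely different step is the equilateral triangle: the paper verifies \eqref{omegaproperty} by an explicit computation with the Lam\'e modes $T_s^{0,1}$, $T_a^{0,1}$ (equation \eqref{equisatisfy}), while you use the rotational averaging $3L=L\circ(I+R+R^2)=0$, valid because the $\mu_2$-eigenspace carries the two-dimensional irreducible representation of $D_3$. That argument is correct and more conceptual --- it needs no mode formulas and would apply verbatim to any domain with $C_3$ symmetry whose $\mu_2$-eigenspace contains no rotation-invariant vector --- but the one fact you assert without proof (that the eigenspace is the $2$-dimensional irrep, equivalently that $R\neq I$ on it) does require checking; it follows, for instance, from the explicit form of $T_s^{0,1}$, which takes different values at the three vertices and hence is not rotation-invariant, so some contact with the explicit modes is hard to avoid entirely.
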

\begin{proof}
By \cite{LS10}, on isosceles triangles with aperture greater than $\frac{\pi}{3}$, any eigenfunction of $\mu_2$ is antisymmetric with respect to the bisection line, and hence \eqref{omegaproperty} is satisfied for $k=2$. For equilateral triangular case, \eqref{omegaproperty} is also satisfied, see \eqref{equisatisfy}.

If $\Omega$ is a convex domain with two axes of symmetry, then by the fact that $\mu_2<\lambda_D$ for any domain, where $\lambda_D$ is the first eigenvalue of Dirichlet Laplacian, the nodal line of any Neumann Laplacian eigenfunction of $\mu_2$ must touch the boundary. Also, by the Courant nodal domain theorem (see \cite{CH}), the nodal line divides the domain into two parts. Hence any eigenfunction cannot be even, and thus the Neumann Laplacian eigenspace of $\mu_2$ is spanned by odd functions. This implies \eqref{omegaproperty}.
\end{proof}

\section{The equation \eqref{cequation} on balls and proof of Theorem \ref{zheng1}}
In this section, we will prove the two conclusions in Theorem \ref{zheng1}.

Even though \eqref{quyu0} is true for any Lipschitz domain, we would like to first prove \eqref{quyu0} and \eqref{ballcase} together for ball domains, since both of the proofs use properties of Bessel functions. We first recall that Bessel function of first kind of order $s$ is denoted as $J_s(z)$. The following three formulas are well known, see for example \cite{K02}.
\begin{align}
\label{1}
    J_s'(z)=-J_{s+1}(z)+\frac{sJ_s(z)}{z},
\end{align} 
\begin{align}
\label{2}
  J_s'(z)=J_{s-1}(z)-\frac{sJ_s(z)}{z},
\end{align}
and
\begin{align}
    \label{3}
\lim_{z \rightarrow 0}\frac{zJ_s'(z)}{J_s(z)}=s.    
\end{align}

Based on these properties, we have the following result.
\begin{proposition}
\label{ballmotivation}
Let $\Omega$ {be the ball of radius $R$ centered at the origin}, $c\in (0,\mu_2)$ and $u_c$ be a solution to \eqref{cequation}. Then $u_c$ satisfies both \eqref{quyu0} and \eqref{ballcase}.
\end{proposition}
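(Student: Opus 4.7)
\textbf{Proof proposal for Proposition \ref{ballmotivation}.}

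The plan is to reduce to a radial ODE, solve it in terms of Bessel functions, and then read off both limits from the resulting explicit formula using the recurrence relations \eqref{1}--\eqref{3}.

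First, since $c \in (0,\mu_2)$ is not an eigenvalue (or reaches it only in the limit), \eqref{cequation} has a unique solution, and by uniqueness together with rotational invariance, $u_c$ must be radial: $u_c(x) = \phi(|x|)$. The PDE becomes the ODE $\phi'' + \frac{n-1}{r}\phi' + c\phi = 0$ on $(0,R)$ with $\phi$ regular at $0$ and $\phi'(R) = -1$. Setting $s = (n-2)/2$, the regular radial solution is $\phi(r) = A r^{-s} J_s(\sqrt{c}\,r)$. Using identity \eqref{1}, a direct computation gives $\phi'(r) = -A r^{-s}\sqrt{c}\,J_{s+1}(\sqrt{c}\,r)$, so the Neumann condition fixes $A = R^s/\bigl(\sqrt{c}\,J_{s+1}(\sqrt{c}\,R)\bigr)$. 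Hence
\begin{equation*}
\phi(R) = \frac{J_s(\sqrt{c}\,R)}{\sqrt{c}\,J_{s+1}(\sqrt{c}\,R)},\qquad
c\int_{\partial B_R} u_c\,d\sigma \;=\; n\omega_n R^{n-1}\,\sqrt{c}\,\frac{J_s(\sqrt{c}\,R)}{J_{s+1}(\sqrt{c}\,R)},
\end{equation*}
where $\omega_n = |B_1|$. Writing $z = \sqrt{c}\,R$, this becomes $n\omega_n R^{n-2}\cdot zJ_s(z)/J_{s+1}(z)$, while $P^2(B_R)/|B_R| = n^2\omega_n R^{n-2}$.

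For \eqref{quyu0}, I will take $z\to 0^+$ and use the small-argument asymptotic $J_\nu(z)\sim (z/2)^\nu/\Gamma(\nu+1)$ (which can also be derived from \eqref{3} combined with the defining series) to obtain $zJ_s(z)/J_{s+1}(z) \to 2(s+1) = n$. This yields the limit $n^2 \omega_n R^{n-2} = P^2(B_R)/|B_R|$.

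For \eqref{ballcase}, the key observation is to identify $\mu_2$ via separation of variables with spherical harmonics of degree one. The nonradial eigenfunctions associated with $\mu_2$ are of the form $r^{1-s}J_{s+1}(\sqrt{\mu_2}\,r)Y_1(\theta)$; applying Neumann boundary condition to these (via \eqref{1} again) yields the characteristic equation
\begin{equation*}
\sqrt{\mu_2}\,R\,J_{s+2}(\sqrt{\mu_2}\,R) \;=\; J_{s+1}(\sqrt{\mu_2}\,R).
\end{equation*}
Setting $z_0 = \sqrt{\mu_2}\,R$ and combining this identity with the three-term recurrence $J_s(z_0)+J_{s+2}(z_0) = 2(s+1)J_{s+1}(z_0)/z_0$, I eliminate $J_{s+2}(z_0)$ and multiply through by $z_0$ to deduce $z_0 J_s(z_0) = (2s+1)J_{s+1}(z_0) = (n-1)J_{s+1}(z_0)$. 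Since $J_{s+1}(z_0)\neq 0$ (otherwise $c=\mu_2$ would already be a radial Neumann eigenvalue, contradicting $\mu_2$ being the second Neumann eigenvalue, which is simple nonradial for the ball by Szeg\H{o}--Weinberger), the ratio $zJ_s(z)/J_{s+1}(z)$ is continuous near $z_0$ and equals $n-1$ there. Passing to the limit gives $n(n-1)\omega_n R^{n-2} = \frac{n-1}{n}P^2(B_R)/|B_R|$, as required; in dimension $n=2$ this is $2\pi$.

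The main obstacle I anticipate is purely bookkeeping: correctly identifying the Bessel order corresponding to the $\mu_2$-eigenfunction (i.e.\ that it is $J_{s+1}$, not $J_s$) and then spotting that the characteristic equation together with the three-term recurrence collapses the ratio $zJ_s(z)/J_{s+1}(z)$ to the integer $2s+1 = n-1$. The rest is routine.
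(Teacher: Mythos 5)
Your argument is correct and follows essentially the same route as the paper: both reduce to the radial Bessel solution $a\,r^{1-n/2}J_{n/2-1}(\sqrt{c}\,r)$, use the recurrences \eqref{1}--\eqref{3} to express $c\int_{\partial \Omega}u_c\,d\sigma$ as a Bessel-function ratio, identify $\sqrt{\mu_2}\,R$ through the characteristic equation (your $z_0J_{s+2}(z_0)=J_{s+1}(z_0)$ is equivalent to the paper's \eqref{xiuzheng}), and collapse the ratio to $n-1$ exactly as in \eqref{yao1}, with the $c\to 0$ limit handled by small-argument asymptotics in both cases. Two harmless slips worth noting: the degree-one eigenfunction's radial factor is $r^{-s}J_{s+1}(\sqrt{\mu_2}\,r)$ rather than $r^{1-s}J_{s+1}(\sqrt{\mu_2}\,r)$ (the characteristic equation you then state is nonetheless the correct one), and $\mu_2(B_R)$ has multiplicity $n$ rather than being simple, so the nonvanishing of $J_{s+1}(z_0)$ should instead be justified by observing that $z_0$ lies below the first positive zero of $J_{s+1}$, or that consecutive Bessel functions share no positive zeros.
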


\begin{proof}
If $c<\mu_2$, then \eqref{cequation} admits a unique solution which must be radial, and hence we may set $u=a_cr^{1-\frac{n}{2}}J_{\frac{n}{2}-1}(\sqrt{c}r)$. Using $\eqref{cequation}_2$, we have
\begin{align*}
    a_c(1-\frac{n}{2})R^{-\frac{n}{2}}J_{\frac{n}{2}-1}(\sqrt{c}R)+a_cR^{1-\frac{n}{2}}\sqrt{c}J_{\frac{n}{2}-1}'(\sqrt{c}R)=-1.
\end{align*}Hence from the above and \eqref{1}-\eqref{2}, we have
\begin{align*}
    c\int_{\partial \Omega}u_c\, \rm{d} \sigma=&-\frac{cn\omega_nR^nJ_{\frac{n}{2}-1}(\sqrt{c}R)}{(1-\frac{n}{2})J_{\frac{n}{2}-1}(\sqrt{c}R)+\sqrt{c}RJ_{\frac{n}{2}-1}'(\sqrt{c}R)}\\
    =&-\frac{cn\omega_nR^nJ_{\frac{n}{2}-1}(\sqrt{c}R)}{(1-\frac{n}{2})J_{\frac{n}{2}-1}(\sqrt{c}R)+\sqrt{c}R\left(-J_{\frac{n}{2}}(\sqrt{c}R)+\frac{(\frac{n}{2}-1)J_{\frac{n}{2}-1}(\sqrt{c}R)}{\sqrt{c}R}\right)}\\
    =&-\frac{cn\omega_nR^nJ_{\frac{n}{2}-1}(\sqrt{c}R)}{\sqrt{c}R\left(\frac{2\sqrt{c}R}{n}\left(J_{\frac{n}{2}}'(\sqrt{c}R)-J_{\frac{n}{2}-1}(\sqrt{c}R)\right)\right)}\\
    =&\frac{1}{2}\frac{n^2\omega_nR^{n-2}J_{\frac{n}{2}-1}(\sqrt{c}R)}{J_{\frac{n}{2}-1}(\sqrt{c}R)-J_{\frac{n}{2}}'(\sqrt{c}R)}.
\end{align*}
In the following, we use $B_R$ to denote the ball of radius $R$ centered at the origin. Using the fact that $R\sqrt{\mu_2}=\sqrt{\mu_2(B_1)}$, which is the first positive root of the following equation (see for example \cite[Page 233]{nshap}) 
\begin{align}
\label{xiuzheng}
    zJ_{\frac{n}{2}}'(z)-\frac{n-2}{2}J_{\frac{n}{2}}(z)=0,
\end{align}
we have
\begin{align}
\label{yao}
    \lim_{c<\mu_2(B_R),c \rightarrow \mu_2(B_R)}c\int_{\partial \Omega}u_c\, \rm{d} \sigma=\frac{1}{2}\frac{P^2(B_R)}{|B_R|}\frac{J_{\frac{n}{2}-1}(\mu_2(B_1))}{J_{\frac{n}{2}-1}(\mu_2(B_1))-\frac{(n-2)J_{\frac{n}{2}}(\mu_2(B_1))}{2\mu_2(B_1)}}.
\end{align}
By \eqref{2}, \eqref{xiuzheng} becomes
\begin{align}
    \label{yao1}
zJ_{\frac{n}{2}-1}(z)=(n-1)J_{\frac{n}{2}}(z).  
\end{align}
Hence by \eqref{yao}-\eqref{yao1}, we have
\begin{align*}
    \lim_{c<\mu_2(B_R),c \rightarrow \mu_2(B_R)}c\int_{\partial \Omega}u_c\, \rm{d} \sigma=\frac{1}{2}\frac{P^2(B_R)}{|B_R|}\frac{1}{\left(1-\frac{n-2}{2(n-1)}\right)}=\frac{n-1}{n}\frac{P^2(B_R)}{|B_R|}.
\end{align*}
This proves \eqref{ballcase}.

To prove \eqref{quyu0}, by \eqref{2}, we also have
\begin{align*}
    c\int_{\partial \Omega}u_c\, \rm{d} \sigma =\frac{n^2\omega_nR^{n-2}}{2}\frac{zJ_{s-1}(z)}{sJ_s(z)},
\end{align*}where $z=\sqrt{c}R$ and $s=\frac{n}{2}$. Since as $c\rightarrow 0$, $z \rightarrow 0$, and thus by \eqref{2} and \eqref{3}, we have
\begin{align*}
   \frac{zJ_{s-1}(z)}{sJ_s(z)}= \frac{zJ_s'(z)+sJ_s(z)}{sJ_s(z)}\rightarrow 2, \quad \mbox{as $c\rightarrow 0$.}
\end{align*}Hence
\begin{align*}
    \lim_{c\rightarrow 0}c\int_{\partial \Omega}u_c\, \rm{d} \sigma=n^2\omega_nR^{n-2}=\frac{P^2(B_R)}{|B_R|}.\end{align*}
\end{proof}

Next, we justify \eqref{quyu0} on any Lipschitz domain. We actually have the following stronger result.
\begin{theorem}
\label{main2}
Let $\Omega$ be a bounded Lipscthiz domain in $\mathbb{R}^n$. Then
\begin{align}
    \label{fcomega}
f(c,\Omega):=c\int_{\partial \Omega}u_c\, \rm{d} \sigma
\end{align}is a strictly decreasing positive function of $c$ when $c \in (0,\kappa_1)$, where $u_c$ is the solution to \eqref{cequation}, and $\kappa_1$ is the first nonzero eigenvalue of boundary mean zero with constant Neumann data, defined in \eqref{defofkappa}. Moreover,
\begin{align}
    \label{0omega}
\lim_{c\rightarrow 0}f(c,\Omega)=\frac{P^2(\Omega)}{|\Omega|}.    
\end{align}
\end{theorem}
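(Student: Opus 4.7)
The plan is to establish three facts in turn: strict monotonicity of $f(c,\Omega)$, its positivity on $(0,\kappa_1)$, and the limit $f\to P^2(\Omega)/|\Omega|$ as $c\to 0^+$. Monotonicity will follow from a reciprocity identity between $u_{c_1}$ and $u_{c_2}$ that yields a clean formula for $f'(c)$; positivity will be forced by a contradiction with the variational definition of $\kappa_1$, anchored by the positive limit at $c\to 0^+$; and the limit itself will be obtained by subtracting off the mean of $u_c$.

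For the limit, I would write $u_c=\alpha_c+\tilde u_c$ with $\alpha_c:=\frac{1}{|\Omega|}\int_\Omega u_c\,dx$. Integrating $-\Delta u_c=cu_c$ and using $\partial u_c/\partial\nu=-1$ forces $\alpha_c=P(\Omega)/(c|\Omega|)$, while the mean-zero part $\tilde u_c$ solves
\[
-\Delta\tilde u_c-c\tilde u_c=\frac{P(\Omega)}{|\Omega|}\ \text{in }\Omega,\qquad\frac{\partial\tilde u_c}{\partial\nu}=-1\ \text{on }\partial\Omega,\qquad \int_\Omega\tilde u_c\,dx=0.
\]
The operator $-\Delta-c$ with Neumann data on the mean-zero subspace of $H^1(\Omega)$ is invertible for $c\in[0,\mu_2)$, so $\tilde u_c\to\tilde u_0$ in $H^1(\Omega)$ as $c\to 0^+$, and the trace theorem keeps $\int_{\partial\Omega}\tilde u_c\,d\sigma$ bounded. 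Therefore
\[
f(c,\Omega)=c\alpha_c P(\Omega)+c\int_{\partial\Omega}\tilde u_c\,d\sigma=\frac{P^2(\Omega)}{|\Omega|}+o(1)\quad\text{as }c\to 0^+.
\]

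For monotonicity, I would multiply $-\Delta u_{c_1}=c_1 u_{c_1}$ by $u_{c_2}$, integrate by parts with the inhomogeneous Neumann condition, and subtract the symmetric identity to obtain the reciprocity relation
\[
(c_1-c_2)\int_\Omega u_{c_1}u_{c_2}\,dx=\int_{\partial\Omega}u_{c_2}\,d\sigma-\int_{\partial\Omega}u_{c_1}\,d\sigma.
\]
Letting $c_1\to c_2$ and using $H^1$-continuity of $c\mapsto u_c$ on $(0,\mu_2)$ (a standard consequence of the bounded invertibility of $-\Delta-c$ with Neumann data) yields $\frac{d}{dc}\int_{\partial\Omega}u_c\,d\sigma=-\int_\Omega u_c^2\,dx$. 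Combining this with the energy identity $\int_\Omega|\nabla u_c|^2\,dx+\int_{\partial\Omega}u_c\,d\sigma=c\int_\Omega u_c^2\,dx$ (from testing the equation against $u_c$) produces
\[
f'(c)=\int_{\partial\Omega}u_c\,d\sigma+c\frac{d}{dc}\int_{\partial\Omega}u_c\,d\sigma=-\int_\Omega|\nabla u_c|^2\,dx,
\]
which is strictly negative on $(0,\mu_2)$ because $u_c$ cannot be constant (a constant fails the boundary condition); a fortiori $f$ is strictly decreasing on $(0,\kappa_1)$.

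Finally, for positivity I would argue by contradiction. Suppose $f(c_0)\le 0$ for some $c_0\in(0,\kappa_1)$; since $f$ extends continuously to $c=0$ with $f(0^+)=P^2(\Omega)/|\Omega|>0$, the intermediate value theorem produces $c_*\in(0,c_0]$ with $\int_{\partial\Omega}u_{c_*}\,d\sigma=0$, hence $u_{c_*}\in W(\Omega)$, and $u_{c_*}\not\equiv 0$ because the inhomogeneous Neumann condition rules out the zero function. The energy identity then forces $\int_\Omega|\nabla u_{c_*}|^2\,dx\big/\int_\Omega u_{c_*}^2\,dx=c_*<\kappa_1$, contradicting the variational definition of $\kappa_1$. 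The main technical point I expect to require care is the $H^1$-continuity (and differentiability) of $c\mapsto u_c$, but this reduces to uniform boundedness of the resolvent $(-\Delta-c)^{-1}$ on compact subsets of $(0,\mu_2)$, which is standard Fredholm theory.
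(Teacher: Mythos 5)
Your proposal is correct, but it takes a genuinely different route from the paper. The paper proves Theorem \ref{main2} by introducing the auxiliary insulation functional $\kappa(m,\Omega)$ of \eqref{kappam}, showing that for each $c\in(0,\kappa_1)$ there is a unique $m_c$ with $c=\kappa(m_c,\Omega)$ and that $u_c$ is a rescaling of the minimizer $u_{m_c}$; this yields the representation $f(c,\Omega)=m_c\kappa(m_c,\Omega)$ as an infimum of a one-parameter family of quotients (see \eqref{rewritee}), from which positivity and strict monotonicity are read off, and the limit \eqref{0omega} follows from $m_c\to\infty$. You instead work directly with the PDE: the decomposition $u_c=\alpha_c+\tilde u_c$ with $\alpha_c=P(\Omega)/(c|\Omega|)$ (forced by integrating the equation) together with uniform coercivity of $\int|\nabla\cdot|^2-c\int(\cdot)^2$ on the mean-zero subspace gives \eqref{0omega}; the reciprocity identity $(c_1-c_2)\int_\Omega u_{c_1}u_{c_2}\,dx=\int_{\partial\Omega}u_{c_2}\,d\sigma-\int_{\partial\Omega}u_{c_1}\,d\sigma$ combined with the energy identity yields the clean formula $f'(c)=-\int_\Omega|\nabla u_c|^2\,dx<0$; and positivity follows by the intermediate value theorem and the observation that a zero of $f$ at some $c_*<\kappa_1$ would produce an admissible trial function in \eqref{defofkappa} with Rayleigh quotient $c_*$, contradicting the definition of $\kappa_1$. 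Your argument is somewhat stronger in two respects: the derivative formula is explicit and quantitative, and it shows strict monotonicity of $f$ on all of $(0,\mu_2)$, not merely on $(0,\kappa_1)$ (your positivity step is essentially the paper's Corollary \ref{311} in disguise). What the paper's route buys is the variational representation of $f(c,\Omega)$ itself as an infimum, which is reused later (Proposition \ref{c0} and the thermal-insulation application in Section 7), whereas your route requires separately justifying the $H^1$-continuity of $c\mapsto u_c$ via resolvent bounds — a standard but nontrivial point that you correctly flag.
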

\begin{proof}
We define the following auxiliary functional 
\begin{align}
    \label{kappam}
\kappa(m,\Omega):=\inf\Big\{\frac{\int_{\Omega}|\nabla u|^2 \, dx+\frac{1}{m}\left(\int_{\partial \Omega}u\,\rm{d} \sigma\right)^2}{\int_{\Omega}u^2\, \rm{d} \sigma}: u\in H^1(\Omega)\setminus \{0\}\Big\},
\end{align}and we let $u_m$ denote the function at which the infimum in \eqref{kappam} is achieved, with
\begin{align*}
    \int_{\Omega}u_m^2\, dx=1.
\end{align*}
Since $\kappa(m,\Omega)\rightarrow 0$ as $m \rightarrow \infty$, we set 
\begin{align}
    \label{m0}
m_0:=\inf\{m> 0:\kappa(m,\Omega)<\kappa_1 \}. \end{align}
Standard argument can show that $\kappa(m,\Omega)$ is continuous with respect to $m>0$. Also, when $m>m_0$, $\int_{\partial \Omega}u_m\,\rm{d} \sigma\ne 0$, and thus $\kappa(m,\Omega)$ is a strictly decreasing function for $m\in (m_0,\infty)$. Therefore, if $c\in (0,\kappa_1)$, there exists a unique number $m_c\in(m_0,\infty)$ such that $c=\kappa(m_c, \Omega)$, and $m_c$ is strictly increasing as $c$ is strictly decreasing. Additionally, as $c \rightarrow 0$, it forces that $u_{m_c}$ converges to a nonzero constant weakly in $H^1(\Omega)$, which further implies that
\begin{align}
    \label{limitmc}
\lim_{c\rightarrow 0}m_c=\infty.    
\end{align}
From minimality, $u_{m_c}$ satisfies
\begin{align}
    \label{umc'}
\begin{cases}
-\Delta u_{m_c}=cu_{m_c}\quad &\mbox{in $\Omega$}\\
\frac{\partial u_{m_c}}{\partial \nu}=-\frac{1}{m_c}\int_{\partial \Omega}u_{m_c}\, \rm{d} \sigma\quad &\mbox{on $\partial \Omega$.}
\end{cases}
\end{align}
Let 
\begin{align}
    \label{lianxiuc}
u_c=\frac{m_cu_{m_c}}{\int_{\partial \Omega}u_{m_c}\,\rm{d} \sigma},    
\end{align}
and thus $u_c$ is a solution to \eqref{cequation}. It is unique since $c<\kappa_1 \le \mu_2$, due to Proposition \ref{gaojiebijiao}. Hence if $c<\kappa_1$, we can write
\begin{align}
\label{rewritee}
    f(c,\Omega):=c\int_{\partial \Omega}u_c\, \rm{d} \sigma=m_c\kappa(m_c,\Omega)=\inf\Big\{\frac{m_c\int_{\Omega}|\nabla u|^2 \, dx+\left(\int_{\partial \Omega}u\,\rm{d} \sigma\right)^2}{\int_{\Omega}u^2\, \rm{d} \sigma}: u\in H^1(\Omega)\Big\},
\end{align}which is clearly positive and decreasing as $c\in (0,\kappa_1)$ is increasing. $f(c,\Omega)$ is actually strictly decreasing, because the Euler-Lagrange equation to the above minimization problem implies that the solution is not a constant. 

Finally, by \eqref{limitmc} and the formula \eqref{rewritee}, we obtain \eqref{0omega}.
\end{proof}

At the end, we give a remark which can be seen from the proof above.
\begin{remark}
\label{strictlypositive}
When $n=2, 3$ and $\Omega$ is a bounded Lipschitz domain, there exists $c_0>0$ such that $u_c>0$ on $\partial \Omega$ when $c \in (0,c_0)$. The same conclusion holds in any dimension if $\Omega$ is $C^1$ or convex.. 
\end{remark}
\begin{proof}
Let $u_m$ and $\kappa(m,\Omega)$ be as in those in the proof of Theorem \ref{main2}. As $m \rightarrow \infty$, it is clear that $u_m$ converges to a nonzero constant weakly in $H^1(\Omega)$. In fact, by H\"older estimates for harmonic functions with bounded Neumann boundary conditions, see \cite{Kenig} and \cite{KP93}, together with elliptic estimates, $u_m \in C^\alpha(\bar{\Omega})$ for some $\alpha \in (0,1)$ when $n=2,3$. If $\Omega$ is $C^1$ or convex, then according to \cite{FMM98} and \cite{GS10}, we still have the H\"older estimate in any dimension. Hence $u_m$ converges to the nonzero constant uniformly when $m \rightarrow \infty$ in these situation. Hence $u_m$ does not vanish on $\partial \Omega$ when $m> M$, for some large $M>0$. We may assume that when $m>M$, $\kappa(m,\Omega)<\kappa_1$ and thus for any $0<c<c_0:=\kappa(M,\Omega)$, there is $m_c>M$ such that $u_c=\tfrac{m_cu_{m_c}}{\int_{\partial \Omega}u_{m_c}\, \rm{d} \sigma}$ is the unique solution to \eqref{cequation}, and hence $u_c>0$ on $\partial \Omega$ when $c \in (0,c_0)$.
\end{proof}

\section{Proof of Theorem \ref{zheng2}}
Before proving Theorem \ref{zheng2}, we first remark that when $c=\mu_2$, even if there are infinitely many solutions to \eqref{cequation}, these solutions are the same up to adding a Neumann Laplacian eigenfunction of $\mu_2$. Since all Neumann Laplacian eigenfunctions of $\mu_2$ on rectangular boxes are spanned by anti-symmetric functions, they have boundary mean zero, and hence $\int_{\partial \Omega}u_{\mu_2}\,\rm{d} \sigma$ is well-defined.

\begin{proof}[Proof of Theorem \ref{zheng2}]
We may assume that $\Omega=\Pi_{i=1}^n(-a_i,a_i)$ and $a_1 \ge a_2 \ge \cdots \ge a_n$. Let $\sigma_k$ be the $k$-th elementary symmetric polynomial with respect to $a_1, \cdots, a_n$.  Hence 
\begin{align}
\label{guanxi}
    |\Omega|=2^n \sigma_n\quad \mbox{and} \quad P(\Omega)=2^n\sigma_{n-1}.
\end{align}
It is easily constructed that 
\begin{align}
    \label{recsolution}
u_c=\frac{1}{\sqrt{c}}\sum_{i=1}^n \frac{\cos(\sqrt{c}x_i)}{\sin(\sqrt{c}a_i)}
\end{align}is a solution to \eqref{cequation}. Hence
\begin{align}
\label{jisuan1}
    \int_{\partial \Omega}u_c\,\rm{d} \sigma=&2\sum_{i=1}^n\int_{\{x_i=a_i\}}u_c \, \rm{d} \sigma\nonumber\\
    =&\frac{2}{\sqrt{c}}\sum_{i=1}^n\left(\frac{\cos(\sqrt{c}a_i)}{\sin(\sqrt{c}a_i)}\frac{|\Omega|}{2a_i}+\sum_{j\ne i}\frac{|\Omega|}{2a_i\cdot 2a_j}\int_{-a_j}^{a_j}\frac{\cos(\sqrt{c}x_j)}{\sin(\sqrt{c}a_j)}\, dx_j\right)\nonumber\\
    =&\sum_{i=1}^n\left(\frac{1}{\sqrt{c}}\frac{|\Omega|}{a_i}\frac{\cos(\sqrt{c}a_i)}{\sin(\sqrt{c}a_i)}+\frac{1}{c}\frac{|\Omega|}{a_i}\sum_{j\ne i}\frac{1}{a_j}\right).
\end{align}
Since $\mu_2=\left(\frac{\pi}{2a_1}\right)^2$, we have that 
\begin{align}
\label{left1}
    \lim_{c\rightarrow \mu_2} c\int_{\partial \Omega}u_c\,\rm{d} \sigma=\sum_{i=2}^n2^{n-1}\pi \frac{\sigma_n}{a_1a_i}\frac{1}{\tan (\frac{\pi}{2}\frac{a_i}{a_1})}+2^{n+1}\sigma_{n-2}.
\end{align}
Using that 
$$\frac{2}{\pi}\theta \le \sin \theta \le \theta,\quad \forall \theta \in [0,\frac{\pi}{2}],$$
we have that 
\begin{align}
    \label{tan}
\frac{1}{\tan \theta}=\frac{\sin(\frac{\pi}{2}-\theta)}{\sin\theta}\ge \frac{\frac{2}{\pi}(\frac{\pi}{2}-\theta)}{\theta},\quad \forall \theta \in (0,\frac{\pi}{2}].
\end{align}
By \eqref{tan}, and since $\frac{\pi}{2}\frac{a_i}{a_1}\in (0,\frac{\pi}{2}]$, \eqref{left1} can be estimated as 
\begin{align}
    \label{left2}
\lim_{c\rightarrow \mu_2} c\int_{\partial \Omega}u_c\,\rm{d} \sigma \ge \sum_{i=2}^n 2^n\frac{\sigma_n}{a_i^2}(1-\frac{a_i}{a_1})+2^{n+1}\sigma_{n-2}.
\end{align}
Let $d_k$ be the $k$-th elementary symmetric polynomials in the $n-1$ variables $(a_2,\dots,a_n)\,$. It follows from the definitions that $\,\sigma_n=a_1d_{n-1}\,$ and $\,\sigma_k=d_k+a_1d_{k-1}\,$ for $\,1 \le k \le n-1\,$.
Using the identities
\begin{align*}
    \sum_{i=2}^n \frac{1}{a_i} = \frac{d_{n-2}}{d_{n-1}}
\end{align*} and
\begin{align*}
    \sum_{i=2}^n \frac{1}{a_i^2} = \left(\frac{d_{n-2}}{d_{n-1}}\right)^2-2 \frac{d_{n-3}}{d_{n-1}},
\end{align*}from \eqref{left2} we obtain that
\begin{align}
    \label{left3}
\lim_{c\rightarrow \mu_2} c\int_{\partial \Omega}u_c\,\rm{d} \sigma \ge&2^n a_1\left(\frac{d_{n-2}^2}{d_{n-1}}-2d_{n-3}\right)-2^nd_{n-2} +2^{n+1}(d_{n-2}+a_1d_{n-3})\nonumber\\
=&2^n\frac{d_{n-2}}{d_{n-1}}(d_{n-1}+a_1d_{n-2})\nonumber\\
=&2^n\frac{a_1d_{n-2}}{\sigma_n}\sigma_{n-1}.
\end{align}
Also,
\begin{align*}
    \frac{P^2(\Omega)}{|\Omega|}=2^n\frac{\sigma_{n-1}^2}{\sigma_n}.
\end{align*}
Hence
\begin{align}
\label{jian}
\lim_{c\rightarrow \mu_2} c\int_{\partial \Omega}u_c\,\rm{d} \sigma-\frac{n-1}{n}\frac{P^2(\Omega)}{|\Omega|}  \ge& \frac{2^n\sigma_{n-1}}{n\sigma_n}\left(na_1d_{n-2}-(n-1)\sigma_{n-1}\right)\nonumber \\
=&\frac{2^n\sigma_{n-1}}{n\sigma_n}\left(na_1d_{n-2}-(n-1)(d_{n-1}+a_1d_{n-2})\right)\nonumber\\
=&\frac{2^n\sigma_{n-1}}{n\sigma_n}(a_1d_{n-2}-(n-1)d_{n-1}).
\end{align}
Let $e_k$ be the $k$-th elementary polynomial in the $(n-1)$ variables $\frac{a_2}{a_1},\cdots, \frac{a_n}{a_1}$. Hence \eqref{jian} is equivalent to
\begin{align}
    \label{jian2}
 \lim_{c\rightarrow \mu_2} c\int_{\partial \Omega}u_c\,\rm{d} \sigma-\frac{n-1}{n}\frac{P^2(\Omega)}{|\Omega|}  \ge  \frac{2^n\sigma_{n-1}}{n\sigma_n} a_1^{n-1}(e_{n-2}-(n-1)e_{n-1}).
\end{align}
Applying the Maclaurin's inequalities to $e_k$, we particularly have
\begin{align}
    \label{ekmac}
\left(\frac{e_{n-2}}{n-1}\right)^{\frac{1}{n-2}}\ge e_{n-1}^{\frac{1}{n-1}}.    
\end{align}
Hence by \eqref{jian2} and \eqref{ekmac},
\begin{align}
    \label{jian3}
 \lim_{c\rightarrow \mu_2} c\int_{\partial \Omega}u_c\,\rm{d} \sigma-\frac{n-1}{n}\frac{P^2(\Omega)}{|\Omega|}  \ge  \frac{2^n\sigma_{n-1}}{n\sigma_n} a_1^{n-1}(n-1)(e_{n-1}^{\frac{n-2}{n-1}}-e_{n-1})\ge 0,
\end{align}where the last inequality is due to the fact that $\frac{a_i}{a_1} \le 1$, for $\, i=2,\dots, n$. This finishes the proof.
\end{proof}

As a corollary, among rectangular boxes with prescribed perimeter, cubes are the unique minimizers to $\mu_2\int_{\partial \Omega}u_{\mu_2}\, \rm{d} \sigma$.
\begin{corollary}
Let $c \in (0,\mu_2)$ and $u_c$ be the solution to \eqref{cequation}. Then among the class of rectangular boxes $\Omega \subset \mathbb{R}^n$, we have
    \begin{align}
\label{recin0}
    \lim_{c\rightarrow \mu_2}c\int_{\partial \Omega}u_c\, \rm{d} \sigma=\mu_2\int_{\partial \Omega}u_{\mu_2}\, \rm{d} \sigma \ge 2^nn(n-1)\left(\frac{P(\Omega)}{n2^n}\right)^{\frac{n-2}{n-1}}.
\end{align}
\end{corollary}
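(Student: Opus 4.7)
The plan is to reduce the claim to Theorem \ref{zheng2} and then invoke Maclaurin's inequality on the half-side-lengths of the box. Writing $\Omega=\prod_{i=1}^n(-a_i,a_i)$ and using the identities \eqref{guanxi}, namely $|\Omega|=2^n\sigma_n$ and $P(\Omega)=2^n\sigma_{n-1}$, Theorem \ref{zheng2} already delivers
\[
\lim_{c\rightarrow \mu_2}c\int_{\partial \Omega}u_c\, d\sigma \ge \frac{n-1}{n}\cdot 2^n\cdot\frac{\sigma_{n-1}^2}{\sigma_n}.
\]
Since the right-hand side of \eqref{recin0} depends only on $P(\Omega)=2^n\sigma_{n-1}$, it suffices to bound $\sigma_{n-1}^2/\sigma_n$ from below by an appropriate power of $\sigma_{n-1}$ alone.

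The key input is the Maclaurin mean inequality $\bigl(\sigma_{n-1}/\binom{n}{n-1}\bigr)^{1/(n-1)}\ge \bigl(\sigma_n/\binom{n}{n}\bigr)^{1/n}$, which rearranges to
\[
\sigma_n \le \left(\frac{\sigma_{n-1}}{n}\right)^{n/(n-1)},
\]
with equality precisely when $a_1=\cdots=a_n$. Dividing $\sigma_{n-1}^2$ by this upper bound on $\sigma_n$ produces a lower bound of the form $c_n\,\sigma_{n-1}^{(n-2)/(n-1)}$ for an explicit constant $c_n$ that is a rational power of $n$.

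The final step is to substitute $\sigma_{n-1}=P(\Omega)/2^n$ and collect the factors of $n$ and $2^n$. A direct exponent check ($2-\tfrac{n}{n-1}=\tfrac{n-2}{n-1}$ and $n\cdot n^{-(n-2)/(n-1)}=n^{1/(n-1)}$) shows that the resulting constant matches $2^nn(n-1)(P(\Omega)/(n2^n))^{(n-2)/(n-1)}$ on the nose, so this last step is pure bookkeeping and introduces no new ideas. There is therefore no real obstacle in the argument once Theorem \ref{zheng2} is in hand. As a bonus, both Theorem \ref{zheng2} and Maclaurin attain equality exactly when $a_1=\cdots=a_n$, so the same reasoning shows that among rectangular boxes of fixed perimeter, cubes are the unique minimizers of $\mu_2\int_{\partial \Omega}u_{\mu_2}\,d\sigma$.
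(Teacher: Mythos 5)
Your proof is correct and is essentially the paper's own argument: both combine Theorem \ref{zheng2} with the Maclaurin inequality $\sigma_n\le(\sigma_{n-1}/n)^{n/(n-1)}$ (the paper phrases it as an upper bound on $|\Omega|$, you as a lower bound on $\sigma_{n-1}^2/\sigma_n$, which is the same step), and your exponent bookkeeping checks out. The closing remark on cubes being the unique minimizers matches the observation the paper makes just before stating the corollary.
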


\begin{proof}
Let $\sigma_k$ be as before. Then \eqref{recin0} is an immediate consequence of \eqref{jianrenzhichang} and the following relation
\begin{align*}
    |\Omega|=2^n\sigma_n \le 2^n \left(\frac{\sigma_{n-1}}{n}\right)^{\frac{n}{n-1}}=2^n\left(\frac{P(\Omega)}{n2^n}\right)^{\frac{n}{n-1}},
\end{align*}due to Maclaurin's inequality.
\end{proof}

\section{Necessary and Sufficient conditions for $\kappa_1=\mu_2$}
In this section, the main goal is to establish necessary and sufficient conditions for $\Omega$ on which $\kappa_1=\mu_2$, by relating to the equation \eqref{cequation}. 

Recall that $0<\kappa_1\le \kappa_2\le \cdots \le \kappa_i \rightarrow \infty$ are the eigenvalues of boundary mean zero Laplacian with contant Neumann data, and then the corresponding eigenfunctions $\{w_i\}_{i=1}^{\infty}$ satisfy \eqref{eigenk}. We first prove Proposition \ref{gensufintro}, which gives a sufficient condition on $\kappa_i=\mu_{i+1}$. Our proof is inspired by \cite{CH} and \cite{Friedlander}.

\begin{proof}[Proof of Proposition \ref{gensufintro}]
Suppose that $\kappa_i<\mu_{i+1}$, and we will prove by contradiction. Let $w_i$ be a boundary mean zero Laplacian eigenfunction of $\kappa_i$ under constant Neumann condition. Let $L:=span\{u_c, w_1, \cdots, w_i\}$, where $c$ and $u_c$ are as in hypothesis. Hence $dim L=i+1$. For any $u=au_c+\sum_{l=1}^i b_l w_l \ne 0$, with $a\in \mathbb{R}$, $b_l\in \mathbb{R}$, we have
\begin{align}
\label{lalian1}
    \int_{\Omega}|\nabla u|^2\, dx=&a^2 \int_{\Omega}|\nabla u_c|^2\, dx+\sum_{l=1}^ib_l^2 \int_{\Omega}|\nabla w_l|^2\, dx+2\sum_{l=1}^iab_l\int_{\Omega}\nabla u_c \nabla w_l\, dx\nonumber\\
    =&a^2 \int_{\Omega}|\nabla u_c|^2\, dx+\sum_{l=1}^ib_l^2 \int_{\Omega}|\nabla w_l|^2\, dx+2c\sum_{l=1}^iab_l\int_{\Omega} u_c w_l\, dx.
\end{align}
If $a=0$, then by \eqref{lalian1},
\begin{align}
\label{lalian2}
    \int_{\Omega}|\nabla u|^2 \, dx=\sum_{l=1}^ib_l^2 \int_{\Omega}|\nabla w_l|^2\, dx=\sum_{l=1}^i \kappa_lb_l^2 \int_{\Omega}w_l^2\, dx<\mu_{i+1}\int_{\Omega}u^2 \, dx.
\end{align}
If $a \ne 0$, then by \eqref{cequation} and \eqref{lalian1}, we have
\begin{align}
\label{lalian3}
    \int_{\Omega}|\nabla u|^2 \, dx<& a^2 c\int_{\Omega}u_c^2 \, dx+\sum_{l=1}^i \kappa_l b_l^2 \int_{\Omega}w_l^2 \, dx+2c\sum_{l=1}^iab_l\int_{\Omega} u_c w_l\, dx\nonumber\\
    \le &c\int_{\Omega}(au_c+\sum_{l=1}^i b_l w_l)^2 \, dx\nonumber\\
    \le & \mu_{i+1}\int_{\Omega}u^2 \, dx.
\end{align}
Therefore, by \eqref{lalian2} and \eqref{lalian3}, we obtain
\begin{align*}
    \inf_{L \subset H^1(\Omega), \rm{dim} L=i+1}\max_{u\in L\setminus \{0\}}\frac{\int_{\Omega}|\nabla u|^2 \, dx}{\int_{\Omega}u^2 \, dx}<\mu_{i+1},
\end{align*}which contradicts to min-max principle for Neumann Laplacian eigenvalues. Therefore, $\kappa_i=\mu_{i+1}$.
\end{proof}

\begin{corollary}
\label{311}
Let $\Omega$ be a bounded Lipschitz domain in $\mathbb{R}^n$. If $\kappa_1<\mu_2$, then for any $c \in (\kappa_1,\mu_2)$, any solution $u_c$ to \eqref{cequation} satisfies
\begin{align}
\label{jinxia}
    \int_{\partial \Omega}u_c\, \rm{d} \sigma<0.
\end{align}
\end{corollary}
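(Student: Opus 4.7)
The plan is to derive the corollary in two steps: first a weak inequality as the contrapositive of Proposition \ref{gensufintro}, and then an upgrade to strict inequality by reducing the equality case to the auxiliary eigenvalue problem \eqref{eigenk}.

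First I would note that for $c \in (\kappa_1,\mu_2)$ the solution $u_c$ to \eqref{cequation} exists and is unique: indeed $(0,\mu_2)$ contains no Neumann Laplacian eigenvalue, so the Fredholm alternative (Theorem \ref{Fred}) applies. Now apply Proposition \ref{gensufintro} with $i=1$ in its contrapositive form: if $\kappa_1 < \mu_2$, then no $c \in [\kappa_1,\mu_2]$ can satisfy $\int_{\partial\Omega} u_c\,d\sigma > 0$. In particular, for every $c \in (\kappa_1,\mu_2)$ we obtain the weak inequality $\int_{\partial\Omega} u_c\,d\sigma \le 0$.

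To upgrade to strict inequality, suppose for contradiction that $\int_{\partial\Omega} u_c\,d\sigma = 0$ for some $c \in (\kappa_1,\mu_2)$. Then $u_c \in W(\Omega)$. Integrating the PDE $-\Delta u_c = c u_c$ over $\Omega$ and using the boundary condition $\partial u_c/\partial\nu = -1$ together with the divergence theorem yields
\begin{equation*}
P(\Omega) = -\int_{\partial\Omega} \frac{\partial u_c}{\partial\nu}\,d\sigma = c\int_\Omega u_c\,dx,
\end{equation*}
so $\int_\Omega u_c\,dx = P(\Omega)/c$. Substituting this back, the boundary datum can be rewritten as $\partial u_c/\partial\nu = -\frac{c}{P(\Omega)}\int_\Omega u_c\,dx$, which is precisely the boundary condition in \eqref{eigenk}. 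Hence $u_c$ solves the eigenvalue system \eqref{eigenk} with eigenvalue $c$, and $u_c \not\equiv 0$ because the Neumann datum is nonzero. Therefore $c = \kappa_j$ for some $j \ge 1$.

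This is impossible: Proposition \ref{gaojiebijiao} gives $\kappa_j \ge \kappa_2 \ge \mu_2 > c$ for all $j \ge 2$, while $c > \kappa_1$ rules out $j=1$. Contradiction, so $\int_{\partial\Omega} u_c\,d\sigma < 0$ strictly. I do not anticipate a real obstacle here; the key observation driving the argument is that a solution of \eqref{cequation} whose boundary mean vanishes is automatically an eigenfunction of the boundary mean zero Neumann problem \eqref{eigenk}, which forces its parameter $c$ into the discrete set $\{\kappa_j\}$ that has empty intersection with the open interval $(\kappa_1,\mu_2)$.
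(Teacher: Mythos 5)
Your proof is correct. The first half (the weak inequality $\int_{\partial\Omega}u_c\,d\sigma\le 0$ as the contrapositive of Proposition \ref{gensufintro}) is exactly what the paper does. For the upgrade to strict inequality the two arguments diverge: the paper shows via the second Green identity that a boundary-mean-zero $u_c$ is $L^2$-orthogonal to the first eigenfunction $w_1$ of \eqref{eigenk}, takes $L=\mathrm{span}\{u_c,w_1\}$ as a two-dimensional trial subspace, and contradicts the min--max characterization of $\mu_2$; you instead observe that a boundary-mean-zero solution of \eqref{cequation} is itself an eigenfunction of the boundary mean zero problem \eqref{eigenk} with eigenvalue $c$, and then rule out $c\in\{\kappa_j\}$ using $c>\kappa_1$ and the interlacing $\kappa_j\ge\kappa_2\ge\mu_2>c$ from Proposition \ref{gaojiebijiao}. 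Your route is conceptually tidier and makes transparent why the boundary mean cannot vanish on $(\kappa_1,\mu_2)$, but it leans on the (unstated in the paper, though standard) fact that the variational eigenvalues $\{\kappa_j\}$ exhaust the spectrum of the constrained problem, i.e.\ that any nontrivial $w\in W(\Omega)$ with $\int_\Omega\nabla w\cdot\nabla\phi=\kappa\int_\Omega w\phi$ for all $\phi\in W(\Omega)$ forces $\kappa\in\{\kappa_j\}$; the paper's two-dimensional trial-space argument avoids invoking this completeness and uses only the classical min--max for the Neumann eigenvalue $\mu_2$. If you keep your version, it would be worth one sentence verifying that $u_c\in W(\Omega)$ satisfies the weak formulation of \eqref{eigenk} against all test functions in $W(\Omega)$ (which it does, since the boundary term $-\int_{\partial\Omega}\phi\,d\sigma$ vanishes for such $\phi$) and then citing the spectral completeness.
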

\begin{proof}
Let $w_1$ be as in the proof of Proposition \ref{gensufintro}. By Proposition \ref{gensufintro}, 
\begin{align*}
    \int_{\partial \Omega}u_c\, \rm{d} \sigma\le 0.
\end{align*}
If the equality is attained above, then by second Green identity, such $u_c$ is perpendicular to $w_1$ in $L^2(\Omega)$, and thus they are linearly independent. Let $L=span\{u_c, w_1\}$. Hence similar argument implies that 
\begin{align}
    \label{q4'}
\max_{u\in L\setminus \{0\}}\frac{\int_{\Omega} |\nabla u|^2 \, dx}{\int_{\Omega} u^2 \, dx} \le c< \mu_2(\Omega),
\end{align}which contradicts the definition of $\mu_2$ via min-max principle. This proves  \eqref{jinxia}.
\end{proof}

Next, we prove Proposition \ref{byproduct1}.
\begin{proof}[Proof of Proposition \ref{byproduct1}]
Let $v$ be an Neumann Laplacian eigenfunction of $\mu_2$. We first show that if $\kappa_1=\mu_2$, then $\int_{\partial \Omega}v\, \rm{d} \sigma=0$. Indeed, if this is not the case, then we let $\bar{v}=\frac{1}{P(\Omega)}\int_{\partial \Omega}v\, \rm{d} \sigma$ and $w=v-\bar{v}$. Hence $\int_{\partial \Omega} w\,\rm{d} \sigma=0$, and 
\begin{align}
\label{guiju}
    \frac{\int_{\Omega}|\nabla w|^2\, dx}{\int_{\Omega}w^2\, dx}=\frac{\int_{\Omega}|\nabla v|^2\, dx}{\int_{\Omega}(v^2+\bar{v}^2)\, dx}< \mu_2(\Omega),
\end{align}where we have used that $\int_{\Omega}v\,dx=0$. This leads to a contradiction to $\kappa_1=\mu_2$. Hence $v$ has boundary mean zero, and thus by Theorem \ref{existence}, \eqref{cequation} is solvable when $c=\mu_2$.

Conversely, if $u_{\mu_2}$ is a solution with positive mean on the boundary, then by Proposition \ref{gensufintro} which we have already proved, we have $\kappa_1=\mu_2$. 

It remains to show that their eigenspaces are the same with respect to the two different boundary conditions. Note that by hypothesis and Theorem \ref{existence}, any Neumann Laplacian eigenfunction of $\mu_2$ has boundary mean zero, and thus is automatically the boundary mean zero Laplacian eigenfunction of $\kappa_1$ with constant Neumann data. If there is another function $w$  which is also a boundary mean zero Laplacian eigenfunction of $\kappa_1$ while $\partial_{\nu} w\ne 0$ on $\partial \Omega$, then after scaling, $w$ is a solution to \eqref{cequation} for $c=\mu_2$, and thus $w$ and $u_{\mu_2}$ are the same up to a Neumann eigenfunction of $\mu_2$. Since \eqref{omegaproperty}, we have that $\int_{\partial \Omega}w\, \rm{d} \sigma=\int_{\partial \Omega}u_{\mu_2}\, \rm{d} \sigma$, which is positive by hypothesis. This contradicts the boundary mean zero property of $w$.
\end{proof}

Combining previous results, we can now easily derive Theorem \ref{maintheorem}.
\begin{proof}[Proof of Theorem \ref{maintheorem}]
That $\kappa_1\le \mu_2$ is a special case of Proposition \ref{gaojiebijiao}. If $\kappa_1=\mu_2$, then by Theorem \ref{main2}, when $c \in (0,\mu_2)$, $\int_{\partial \Omega}u_c\, \rm{d} \sigma>0$. Conversely, if for any $c \in (0,\mu_2)$, we have $\int_{\partial \Omega}u_c\, \rm{d} \sigma>0$. Then by Corollary \ref{311}, we derive $\kappa_1=\mu_2$. 

\eqref{zonghe} is also an immediate consequence of Theorem \ref{main2}, Corollary \ref{311} and continuity.
\end{proof}

We can now obtain exact constants $\kappa_1$ on balls and rectangles.
\begin{corollary}
If $\Omega$ is a rectangular box in $\mathbb{R}^n$, then $\kappa_1=\mu_2$ and their eigenspaces are the same, with respect to boundary mean zero Laplacian with constant Neumann data and Neumann Laplacian.
\end{corollary}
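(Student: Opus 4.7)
The plan is to combine Theorem~\ref{zheng2} with Theorem~\ref{maintheorem} to deduce $\kappa_1=\mu_2$, and then invoke Proposition~\ref{byproduct1} to upgrade this to coincidence of eigenspaces.

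First I would apply Theorem~\ref{zheng2} to the rectangular box $\Omega$, which yields
$$\lim_{c\to\mu_2^-} c\int_{\partial\Omega}u_c\,d\sigma \;\ge\; \frac{n-1}{n}\,\frac{P^2(\Omega)}{|\Omega|} \;>\; 0.$$
In particular, by continuity in $c$, there exists $c^{*}\in(0,\mu_2)$, arbitrarily close to $\mu_2$, with $c^{*}\int_{\partial\Omega}u_{c^{*}}\,d\sigma > 0$. Suppose for contradiction that $\kappa_1<\mu_2$. Then by the second line of \eqref{zonghe} in Theorem~\ref{maintheorem}, every $c\in(\kappa_1,\mu_2)$ satisfies $c\int_{\partial\Omega}u_c\,d\sigma<0$. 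Choosing $c^{*}$ close enough to $\mu_2$ places it in $(\kappa_1,\mu_2)$, contradicting the positivity just derived. Therefore $\kappa_1=\mu_2$.

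For the eigenspace claim, I would use the second half of Proposition~\ref{byproduct1}, which requires exhibiting a solution $u_{\mu_2}$ of \eqref{cequation} at $c=\mu_2$ with $\int_{\partial\Omega}u_{\mu_2}\,d\sigma>0$. Normalize so that $\Omega=\prod_{i=1}^n(-a_i,a_i)$ with $a_1\ge a_2\ge\cdots\ge a_n$, so that $\mu_2=(\pi/(2a_1))^2$. Then the explicit formula
$$u_c \;=\; \frac{1}{\sqrt{c}}\sum_{i=1}^n \frac{\cos(\sqrt{c}\,x_i)}{\sin(\sqrt{c}\,a_i)}$$
from the proof of Theorem~\ref{zheng2} specializes at $c=\mu_2$ to a smooth function on $\overline{\Omega}$: each denominator $\sin(\pi a_i/(2a_1))$ lies in $(0,1]$ since $a_i\le a_1$, so no division by zero occurs. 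A direct check confirms this is a weak solution of \eqref{cequation} at $c=\mu_2$, and its boundary integral equals the (positive) limit $\lim_{c\to\mu_2^-}\int_{\partial\Omega}u_c\,d\sigma$ by continuity of the explicit formula in $c$. Proposition~\ref{byproduct1} then forces the eigenspace of $\kappa_1$ with constant Neumann data to coincide with the Neumann eigenspace of $\mu_2$.

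The proof is essentially a bookkeeping exercise once Theorem~\ref{zheng2} is in hand, so no serious obstacle remains; the only point warranting attention is verifying that the denominators $\sin(\pi a_i/(2a_1))$ do not vanish at $c=\mu_2$, which is immediate from the ordering $a_i\le a_1$.
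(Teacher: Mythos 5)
Your proof is correct and follows essentially the same route as the paper: the paper simply observes that Theorem \ref{zheng2} (through the explicit solution \eqref{recsolution} evaluated at $c=\mu_2$) gives $\int_{\partial\Omega}u_{\mu_2}\,d\sigma>0$ and then invokes Proposition \ref{byproduct1}, which already yields both $\kappa_1=\mu_2$ and the coincidence of eigenspaces. Your first paragraph (the contradiction argument via \eqref{zonghe}) is therefore logically redundant, since the second half of your argument alone establishes the full statement.
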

\begin{proof}
By Theorem \ref{zheng2}, $u_{\mu_2}$ has positive boundary mean, and hence by Proposition \ref{byproduct1}, $\kappa_1=\mu_2$ and their eigenspaces are the same  with respect to boundary mean zero Laplacian with constant Neumann data and Neumann Laplacian.
\end{proof}

\begin{corollary}
\label{ball}
Let $B_R$ be a ball of radius $R$ in $\mathbb{R}^n$. Then $\kappa_1=\mu_2$ and their eigenspaces are the same, with respect to the boundary mean zero Laplacian with constant Neumann data and Neumann Laplacian.
\end{corollary}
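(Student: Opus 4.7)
My plan is to apply the converse direction of Proposition \ref{byproduct1} on $\Omega = B_R$. Specifically, I will exhibit a solution $u_{\mu_2}$ of \eqref{cequation} at the critical value $c = \mu_2$ whose boundary integral is strictly positive; the proposition then delivers $\kappa_1(B_R) = \mu_2(B_R)$ together with the identification of the eigenspaces in one stroke.

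To produce $u_{\mu_2}$, I will pass to the limit $c \uparrow \mu_2$ in the explicit radial formula from the proof of Proposition \ref{ballmotivation}: $u_c = a_c\, r^{1-n/2}\, J_{n/2-1}(\sqrt{c}\, r)$, where $a_c$ is the unique scalar making the Neumann condition on $\partial B_R$ hold. The identities \eqref{xiuzheng} and \eqref{yao1}, which characterise $\sqrt{\mu_2}\, R$ as the first positive root of \eqref{xiuzheng}, simplify the denominator of $a_c$ at $c = \mu_2$ to a nonzero multiple of $J_{n/2-1}(\sqrt{\mu_2}\, R)$, and this Bessel value is itself nonvanishing (again by \eqref{yao1}). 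Hence $a_c$ extends continuously to $c = \mu_2$ and $u_{\mu_2} := \lim_{c \to \mu_2^-} u_c$ is a radial $H^1(B_R)$ solution of \eqref{cequation} at $c = \mu_2$. Alternatively, existence is guaranteed by Theorem \ref{existence}, since the Neumann modes of $\mu_2$ on $B_R$ have the form $r^{1-n/2} J_{n/2}(\sqrt{\mu_2}\, r)\, x_i/r$ and are antisymmetric in one coordinate, hence verify \eqref{omegaproperty}.

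The boundary integral of $u_{\mu_2}$ is then handed to us for free by the ball case \eqref{ballcase} of Proposition \ref{ballmotivation}:
\[
\mu_2 \int_{\partial B_R} u_{\mu_2}\, d\sigma \;=\; \lim_{c \to \mu_2^-} c \int_{\partial B_R} u_c\, d\sigma \;=\; \frac{n-1}{n}\, \frac{P^2(B_R)}{|B_R|} \;>\; 0.
\]
This verifies the hypothesis of the converse half of Proposition \ref{byproduct1}, so $\kappa_1(B_R) = \mu_2(B_R)$. The ``moreover'' clause of that proposition further identifies each minimizer of $\kappa_1$ as a linear combination of Neumann $\mu_2$-eigenfunctions; since all such eigenfunctions have zero boundary mean on the sphere (being antisymmetric), they lie in $W(B_R)$ and conversely every $\kappa_1$-minimizer is of this form, so the two eigenspaces coincide. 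The only non-routine ingredient is the continuity of the scalar $a_c$ at $c = \mu_2$, which is an immediate two-line computation from the Bessel identities \eqref{xiuzheng}--\eqref{yao1}.
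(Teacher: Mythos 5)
Your argument is correct, but it follows a different path than the paper's. The paper proves this corollary by observing from the explicit radial formula that $u_c>0$ on $\partial B_R$ for every $c\in(0,\mu_2)$, so that $c\int_{\partial\Omega}u_c\,d\sigma>0$ on the whole interval, and then invokes the equivalence in Theorem \ref{maintheorem}; you instead work at the critical value $c=\mu_2$, extend the radial solution continuously to $u_{\mu_2}$, read off $\mu_2\int_{\partial B_R}u_{\mu_2}\,d\sigma=\frac{n-1}{n}\frac{P^2(B_R)}{|B_R|}>0$ from \eqref{ballcase}, and apply the converse half of Proposition \ref{byproduct1}. This is exactly the strategy the paper uses for rectangular boxes, transplanted to the ball, and it is sound: the nonvanishing of the denominator of $a_c$ at $c=\mu_2$ does follow from \eqref{xiuzheng} and \eqref{yao1} (the denominator reduces to a nonzero multiple of $J_{n/2-1}(\sqrt{\mu_2}R)$, which cannot vanish since otherwise \eqref{yao1} would force $J_{n/2}$ to vanish at the same point), and your fallback existence argument via Theorem \ref{existence} and the antisymmetry of the angular modes is also valid. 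One advantage of your route is that it delivers the eigenspace identification as part of the ``moreover'' clause of Proposition \ref{byproduct1}, whereas the paper's stated proof, resting only on Theorem \ref{maintheorem}, technically addresses only the equality $\kappa_1=\mu_2$ and leaves the eigenspace claim implicit; the paper's route is in turn marginally lighter, since it needs only the sign of $u_c$ on the boundary for subcritical $c$ and no passage to the limit.
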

\begin{proof}
Let $c \in (0,\mu_2)$ and $u_c$ be the solution to \eqref{cequation}. By the explicit formula for $u_c$ constructed in the proof of Proposition \ref{ballmotivation}, we have $u_c>0$ for any $c \in (0,\mu_2]$. Therefore, by Theorem \ref{maintheorem}, we have that $\kappa_1=\mu_2$ when the domain is a ball.
\end{proof}

Another easy consequence is the Theorem \ref{iso}. That is, balls maximize $\kappa_1(\cdot)$ among all Lipschitz domains with fixed volume.
\begin{proof}[Proof of Theorem \ref{iso}]
Let $\Omega$ be a Lipschitz domain and $\Omega^\sharp$ be the ball with the same volume as that of $\Omega$. By Theorem \ref{maintheorem}, Corollary \ref{ball} and the seminal Szeg\"o-Weinberger theorem (see \cite{Szego} and \cite{Wein}) which states that balls maximize $\mu_2$ among Lipschitz domains with fixed volume, we have
\begin{align}
\label{en1}
    \kappa_1(\Omega)\le \mu_2(\Omega)\le \mu_2(\Omega^\sharp)=\kappa_1(\Omega^\sharp).
\end{align}
Since \cite{BP12} also gives the stabilty inequality of $\mu_2$ at balls as 
\begin{align}
\label{BP}
    |B|^{2/n}\mu_2(B)-|\Omega|^{2/n}\mu_2(\Omega) \ge C(n)A^2(\Omega),
\end{align}the quantitative version \eqref{qk1} of $\kappa_1$ at ball is thus as a consequence of \eqref{en1} and \eqref{BP}.
\end{proof}

Recall that in Theorem \ref{main2}, it has been shown that for any Lipschitz domain $\Omega$ in $\mathbb{R}^n$, $f(c):=c\int_{\partial \Omega}u_c\, \rm{d} \sigma$ is a strictly decreasing positive function with respect to $c \in (0,\kappa_1)$, with $f(0^+)=\tfrac{P^2(\Omega)}{|\Omega|}$. Hence it is natural to figure out the following number
\begin{align}
\label{c0def}
    c_0:=\sup\{c>0: f(c')>0\, \,\mbox{for any $0<c'<c$}\}.
\end{align}
If $\Omega$ is the domain such that $\kappa_1<\mu_2$, then by Theorem \ref{maintheorem}, we know that in this case $c_0=\kappa_1$. Concerning the case of $\Omega$ on which $\kappa_1=\mu_2$, we have the following partial answer, and the proof is similar to that of Theorem \ref{main2}. 

\begin{proposition}
\label{c0}
Let $\Omega$ be a bounded Lipschitz domain in $\mathbb{R}^n$ on which $u_{\mu_2}$ exists and has positive mean over $\partial \Omega$. Let $f(c):=c\int_{\partial \Omega}u_c\, \rm{d} \sigma$, where $u_c$ is a solution to \eqref{cequation}. Let $\tilde{\kappa}_2$ be next distinct boundary mean zero Laplacian eigenvalue of $\kappa_1$. Then $f(c)$ is a decreasing, positive function on $(0,\tilde{\kappa}_2)$. If $\tilde{\kappa}_2 \in [\mu_i,\mu_{i+1})$ for some $i$, then $c_0=\tilde{\kappa}_2$ and $f(c)<0$ when $c \in (\tilde{\kappa}_2, \mu_{i+1})$.
\end{proposition}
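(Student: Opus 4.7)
My plan is to mirror the proof of Theorem \ref{main2} for the monotonicity statement and the reasoning of Corollary \ref{311} for the sign change, replacing unconstrained test functions by test functions that are $L^2(\Omega)$-orthogonal to the eigenspace $V_1$ of $\kappa_1 = \mu_2$. Under the hypotheses, Proposition \ref{byproduct1} identifies $V_1$ with the Neumann eigenspace of $\mu_2$, so each $v \in V_1$ satisfies $\int_\Omega v\,dx = 0$, $\int_{\partial \Omega} v\,d\sigma = 0$, and $\partial_\nu v = 0$ on $\partial\Omega$.

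For the monotonicity and positivity on $(0,\tilde{\kappa}_2)$, I would introduce the constrained auxiliary functional
\begin{align*}
\tilde{\kappa}(m, \Omega) := \inf\left\{\frac{\int_\Omega |\nabla u|^2\,dx + \frac{1}{m}\left(\int_{\partial \Omega} u\,d\sigma\right)^2}{\int_\Omega u^2\,dx} : u \in H^1(\Omega)\setminus\{0\},\ u \perp V_1\right\}.
\end{align*}
Non-zero constants are admissible, yielding $\tilde{\kappa}(m,\Omega)\to 0$ as $m\to\infty$, while as $m\to 0^+$ the penalty forces $\int_{\partial \Omega} u = 0$ at minimizers and recovers the variational characterization of $\tilde{\kappa}_2$. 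Continuity and strict monotonicity in $m$ then yield a bijection between a half-line $(m_0',\infty)$ and $(0,\tilde{\kappa}_2)$. The Lagrange multipliers associated with the constraint $u\perp V_1$ vanish identically since every $v\in V_1$ has $\partial_\nu v = 0$ and $\int_{\partial \Omega} v\,d\sigma = 0$, so the minimizer $u_{m_c}$ solves \eqref{umc'}, and rescaling gives a solution of \eqref{cequation}. A second Green's identity against any $v\in V_1$ produces $(c-\mu_2)\int_\Omega u_c v\,dx = 0$, so the unique $u_c$ of \eqref{cequation} is automatically $\perp V_1$ on $(0,\tilde{\kappa}_2)\setminus\{\mu_2\}$ and the two constructions coincide. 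One reads off $\int_{\partial \Omega}u_c = m_c$ and the variational formula
\begin{align*}
f(c) = m_c\,\tilde{\kappa}(m_c,\Omega) = \inf_{u\perp V_1}\frac{m_c\int_\Omega |\nabla u|^2 + \left(\int_{\partial \Omega} u\,d\sigma\right)^2}{\int_\Omega u^2\,dx},
\end{align*}
which is both strictly positive and strictly decreasing in $c$ exactly as in the proof of Theorem \ref{main2}.

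For the sign change, I would fix $c \in (\tilde{\kappa}_2,\mu_{i+1})$; the choice of $i$ keeps $c$ outside the Neumann spectrum, so $u_c$ is unique. Writing $\tilde{\kappa}_2 = \kappa_{k+1}$ where $k$ is the multiplicity of $\kappa_1$, in the generic sub-case $\tilde{\kappa}_2 < \mu_{k+2}$ one has $c\in[\kappa_{k+1},\mu_{k+2}]$, so $\int_{\partial \Omega}u_c > 0$ would force $\kappa_{k+1} = \mu_{k+2}$ by Proposition \ref{gensufintro}, contradicting $\tilde{\kappa}_2 < \mu_{k+2}$. Strict negativity is then extracted through the linear-independence refinement used in Corollary \ref{311}: if $\int_{\partial \Omega}u_c = 0$, then $u_c$ together with the first $i$ boundary mean zero Laplacian eigenfunctions $w_1,\ldots,w_i$ spans an $(i+1)$-dimensional subspace whose Rayleigh quotient is bounded above by $c < \mu_{i+1}$, violating the min-max characterization of $\mu_{i+1}$. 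Combined with the first assertion, this forces $c_0 = \tilde{\kappa}_2$. The main obstacle is the non-generic sub-case in which $\tilde{\kappa}_2$ coincides with a repeated Neumann eigenvalue so that $\tilde{\kappa}_2 = \mu_{k+2} = \cdots = \mu_i$ and several $\kappa$-indices collapse onto $\tilde{\kappa}_2$; no single index $j$ then brackets $c$ for a direct application of Proposition \ref{gensufintro}, and one must carefully track the multiplicities of $\kappa$- and $\mu$-eigenvalues to select basis eigenfunctions $w_j$ with $\kappa_j \le c$ that still produce the required $(i+1)$-dimensional test space for the min-max contradiction.
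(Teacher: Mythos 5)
Your treatment of the first assertion reproduces the paper's proof essentially verbatim: the same constrained auxiliary functional with the orthogonality condition against $V_1=E_2$, the same observation that this constraint is inactive in the Euler--Lagrange equation because every $\phi_2\in V_1$ has $\partial_\nu\phi_2=0$ and $\int_{\partial\Omega}\phi_2\,d\sigma=0$, and the same identity $f(c)=m_c\,\kappa(m_c,\Omega)$ giving positivity and strict monotonicity; your Green-identity check that the genuine solution $u_c$ of \eqref{cequation} is automatically orthogonal to $V_1$ is a detail the paper leaves implicit and is worth keeping. The two arguments diverge only in how $f(c)=0$ is excluded on $(\tilde{\kappa}_2,\mu_{i+1})$. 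You propose to re-run the $(i+1)$-dimensional test-space construction of Corollary \ref{311}, which is precisely where your multiplicity bookkeeping obstacle lives: in the degenerate sub-case one cannot guarantee $i$ eigenfunctions $w_j$ with $\kappa_j\le c$, so the test space may simply not be available. The paper instead observes that $\int_{\partial\Omega}u_c\,d\sigma=0$ forces $u_c$ itself to be an eigenfunction of the boundary mean zero Laplacian with eigenvalue $c$ (the divergence theorem gives $c\int_\Omega u_c\,dx=P(\Omega)$, so $\partial_\nu u_c=-1$ is exactly the Neumann condition in \eqref{eigenk}), and then appeals to the interlacing $\mu_j\le\kappa_j\le\mu_{j+1}$ of Proposition \ref{gaojiebijiao} to rule out a $\kappa$-eigenvalue in $(\tilde{\kappa}_2,\mu_{i+1})$. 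This buys a purely arithmetic finish in the generic case $i=k+1$ (where $\kappa_i=\tilde{\kappa}_2<c$, so $c=\kappa_i$ is impossible); in the fully degenerate case you flag, where $\tilde{\kappa}_2=\mu_{k+2}=\cdots=\mu_i$, the interlacing alone still leaves open the possibility $c=\kappa_i\in(\tilde{\kappa}_2,\mu_{i+1}]$, so the paper's route reformulates your difficulty as an eigenvalue-counting statement rather than dissolving it. I would recommend adopting the paper's finish for the non-strict and strict sign statements alike, and stating explicitly how the position of $\kappa_i$ is controlled in the degenerate case, since that is the one point neither write-up currently nails down.
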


\begin{proof}
Let $E_2$ be the Neumann Laplacian eigenspace of $\mu_2$, and similar to \eqref{kappam}, we consider 
\begin{align}
    \label{ka1}
\kappa(m,\Omega):=\inf\Big\{\frac{\int_{\Omega}|\nabla u|^2\, dx+\frac{1}{m}\left(\int_{\partial \Omega}u\, \rm{d} \sigma\right)^2 }{\int_{\Omega}u^2 \, dx}: u\in H^1(\Omega)\setminus\{0\}, \int_{\Omega}u\phi_2=0, \,\forall\, \phi_2\in E_2\Big\}.
\end{align}
Since $\int_{\Omega}\phi_2\, dx=0$ for any $\phi_2\in E_2$, and hence $\lim_{m \rightarrow \infty}\kappa(m,\Omega)=0$. Also, by Proposition \ref{byproduct1}, $E_2$ is exactly the eigenspace of $\kappa_1$, and then by the variational characterization of $\kappa_i$, we have that $\lim_{m \rightarrow 0^+}\kappa(m,\Omega)=\tilde{\kappa}_2$.
Let
$$m_0:=\inf\{m>0: \kappa(m,\Omega)<\tilde{\kappa}_2\}. $$
For each $m>m_0$, there exists a minimizer $u_m$ at which the infimum in \eqref{ka1} is attained. By variational argument, 
\begin{align}
\label{EL}
    \int_{\Omega}\nabla u_m \nabla \phi\, dx+\frac{1}{m}\left(\int_{\partial \Omega}u_m\, dx\right)\left( \int_{\partial \Omega}\phi\, \rm{d}\sigma \right)-\kappa(m,\Omega)\int_{\Omega}u_m\phi\, dx=0,\quad \forall \phi \perp \phi_2.
\end{align}
Since $\int_{\partial \Omega}\phi_2=0$ due to $\kappa_1=\mu_2$, and since $u_m\perp \phi_2$, \eqref{EL} also holds when $\phi=\phi_2$. Therefore, $u_m$ satisfies
\begin{align}
    \label{um}
\begin{cases}
-\Delta u_m=\kappa(m,\Omega) u_m\quad &\mbox{in $\Omega$}\\
\frac{\partial u_m}{\partial \nu}=-\frac{1}{m}\int_{\partial \Omega}u_m\,\rm{d} \sigma\quad &\mbox{on $\partial \Omega$}\\
\int_{\Omega}u_m\phi_2\, dx=0.
\end{cases}    
\end{align}
Note that $\kappa(m,\Omega)$ is a strictly decreasing function with respect to $m$, when $m>m_0$. Hence for any $c \in (0,\tilde{\kappa}_2)$, there exists $m_c>m_0$ such that $\kappa(m_c,\Omega)=c$. Hence for $c \in (0,\tilde{\kappa}_2)$, $u_c:=\frac{m_c u_{m_c}}{\int_{\partial \Omega}u_{m_c}\, \rm{d} \sigma}$ is a solution to \eqref{cequation}. Therefore, 
\begin{align*}
    c\int_{\partial \Omega}u_c\, \rm{d} \sigma=m_c \kappa(m_c, \Omega)>0.
\end{align*}
To show that $f(c)$ is strictly decreasing when $c \in (0, \tilde{\kappa}_2)$, note first that when $0<c_1<c_2<\tilde{\kappa}_2$, $m_{c_1}>m_{c_2}>m_0$. Consider
\begin{align*}
    m\kappa(m,\Omega)=\inf\Big\{\frac{m\int_{\Omega}|\nabla u|^2\, dx+\left(\int_{\partial \Omega}u\, \rm{d} \sigma\right)^2 }{\int_{\Omega}u^2 \, dx}: u \in H^1(\Omega)\setminus\{0\}, \int_{\Omega}u\phi_2=0,\, \forall \, \phi \in E_2\Big\}.
\end{align*}
Since constant function is not a minimizer to the above functional, when $m$ decreases, $m\kappa(m,\Omega)$ also strictly decreases. Hence 
\begin{align*}
    m_{c_1}\kappa(m_{c_1},\Omega)> m_{c_2}\kappa(m_{c_2},\Omega),
\end{align*}which is equivalent to 
\begin{align*}
    c_1\int_{\partial \Omega}u_{c_1}\, \rm{d} \sigma> c_2\int_{\partial \Omega}u_{c_2}\, \rm{d} \sigma.
\end{align*}Therefore, we have proved that $f(c)$ is a decreasing, positive function for $c\in (0,\tilde{\kappa}_2)$. 

Next, if $\tilde{\kappa}_2$ lies in some $[\mu_i,\mu_{i+1})$, we show that $f(c)<0$ when $c \in (\tilde{\kappa}_2, \mu_{i+1})$. By Proposition \ref{gensufintro}, $f(c) \le 0$, otherwise $\tilde{\kappa}_2=\mu_{i+1}$. If $f(c)=0$, then $c$ is also a boundary mean zero Laplacian eigenvalue. However, this means that $\mu_i\le \tilde{\kappa}_2 <c< \mu_{i+1}$, which contradicts Proposition \ref{gaojiebijiao}.
\end{proof}


\section{The equation \eqref{cequation} on equilateral triangles}
In this section, we study \eqref{cequation} in the case when $\Omega$ is an equilateral triangle, and we will also obtain exact constant $\kappa_1$ on equilateral triangles.

We may assume that $\Omega$ has vertices $(-1,0)$, $(1,0)$ and $(0,\sqrt{3})$, and hence the Neumann eigenvalues are $\frac{4}{9}\pi^2(m^2+mn+n^2)$, where $m,n \in \mathbb{Z}$. Each eigenvalue has a symmetric mode and an anti-symmetric mode. All the modes were originally discovered in \cite{Lame}, see also the exposition in \cite{Mc}. The symmetric modes are
\begin{align}
    \label{sym}
T_{s}^{m,n}=&\cos\left(\frac{\pi l}{\sqrt{3}}(\sqrt{3}-y)\right)\cos\left(\frac{\pi(m-n)}{3}x\right)  \nonumber \\
&+ \cos\left(\frac{\pi m}{\sqrt{3}}(\sqrt{3}-y)\right)\cos\left(\frac{\pi(n-l)}{3}x\right)\nonumber\\
&+\cos\left(\frac{\pi n}{\sqrt{3}}(\sqrt{3}-y)\right)\cos\left(\frac{\pi(l-m)}{3}x\right),
\end{align}
and the anti-symmetric modes are
\begin{align}
    \label{ant}
T_{a}^{m,n}=&\cos\left(\frac{\pi l}{\sqrt{3}}(\sqrt{3}-y)\right)\sin\left(\frac{\pi(m-n)}{3}x\right)  \nonumber \\
&+ \cos\left(\frac{\pi m}{\sqrt{3}}(\sqrt{3}-y)\right)\sin\left(\frac{\pi(n-l)}{3}x\right)\nonumber\\
&+\cos\left(\frac{\pi n}{\sqrt{3}}(\sqrt{3}-y)\right)\sin\left(\frac{\pi(l-m)}{3}x\right).
\end{align}
In the above, $m+n+l=0$. Note that $T_s^{0,1}$ and $T_a^{0,1}$ are the fundamental modes of Neumann Laplacian. Clearly, anti-symmetric modes have mean zero over boundary. By direct computation,
\begin{align}
\label{equisatisfy}
    \int_{\partial \Omega}T_s^{0,1}\, \rm{d} \sigma=\int_{\partial \Omega}T_a^{0,1}\, \rm{d} \sigma=0.
\end{align}
Therefore by Theorem  \ref{existence}, we have
\begin{corollary}
\label{ext}
Let $\Omega$ be an equilateral triangle. Then when $c=\mu_2$, \eqref{cequation} admits a solution.
\end{corollary}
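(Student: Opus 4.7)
The plan is to apply Theorem \ref{existence} with $k=2$, which reduces the corollary to verifying property \eqref{omegaproperty}: every Neumann Laplacian eigenfunction of $\mu_2$ on the equilateral triangle integrates to zero over $\partial\Omega$. With the chosen vertices $(-1,0)$, $(1,0)$, $(0,\sqrt{3})$, the Neumann eigenvalues are $\tfrac{4\pi^2}{9}(m^2+mn+n^2)$, so $\mu_2=\tfrac{4\pi^2}{9}$ is attained at $(m,n)=(0,1)$ (and its equivalents under the action of the dihedral symmetry group $D_3$), and the corresponding eigenspace is spanned by the symmetric mode $T_s^{0,1}$ and the antisymmetric mode $T_a^{0,1}$ given by \eqref{sym}--\eqref{ant}.

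First I would dispose of the antisymmetric mode $T_a^{0,1}$: it is odd in $x$, so its integral over the bottom edge $\{y=0,\,-1\le x\le 1\}$ vanishes immediately, and the two slanted edges are mapped to each other by the reflection $x\mapsto -x$ with $T_a^{0,1}$ changing sign, so the sum of their contributions vanishes as well. Thus $\int_{\partial\Omega}T_a^{0,1}\,d\sigma=0$.

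The main (and only nontrivial) step is to verify the same vanishing for the symmetric mode $T_s^{0,1}$. Using $(l,m,n)=(-1,0,1)$, the function $T_s^{0,1}(x,y)$ becomes an explicit combination of three cosine-products. I would parametrize each of the three sides and compute the three resulting one-dimensional integrals directly: on the bottom edge the integral reduces to $\int_{-1}^{1}$ of cosines in $x$ (with $y=0$), while on each slanted edge one substitutes the linear parametrization and evaluates an integral of the form $\int_{0}^{\sqrt{3}}$ of products of cosines. By the $D_3$-symmetry of the triangle and the invariance of the eigenspace under $D_3$ (which permutes the roles of $(l,m,n)$), the three side integrals are equal up to permutation of summands, so it suffices to check that the total is zero on the bottom edge after using the permutation identities. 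This is precisely the computation that yields \eqref{equisatisfy} and is elementary.

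Having shown $\int_{\partial\Omega}T_s^{0,1}\,d\sigma=\int_{\partial\Omega}T_a^{0,1}\,d\sigma=0$, every element of the $\mu_2$-eigenspace (a linear combination of these two modes) satisfies \eqref{omegaproperty} for $k=2$. Theorem \ref{existence} then delivers a weak solution of \eqref{cequation} at $c=\mu_2$, completing the proof. I do not expect any real obstacle: the only work is the explicit trigonometric integration in the second step, and the symmetry of the configuration makes it essentially a one-line computation per side.
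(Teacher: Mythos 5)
Your proposal follows the paper's route exactly: invoke Theorem \ref{existence} with $k=2$ and verify \eqref{omegaproperty} by checking that both fundamental modes $T_s^{0,1}$ and $T_a^{0,1}$ from \eqref{sym}--\eqref{ant} have vanishing boundary mean, which is precisely the content of \eqref{equisatisfy}; the paper simply asserts this ``by direct computation.'' Your treatment of the antisymmetric mode is correct, and your primary plan for the symmetric mode (parametrize all three sides and integrate) also works. However, the proposed symmetry shortcut is wrong: the three side integrals of $T_s^{0,1}$ are \emph{not} equal, and it does \emph{not} suffice to check the bottom edge. With vertices $(-1,0)$, $(1,0)$, $(0,\sqrt3)$ one finds $T_s^{0,1}=-2\cos(\pi y/\sqrt3)\cos(\pi x/3)+\cos(2\pi x/3)$, whose integral over the bottom edge equals $-\tfrac{9\sqrt3}{2\pi}\neq 0$, while each slanted edge contributes $+\tfrac{9\sqrt3}{4\pi}$; only the sum over all three edges vanishes. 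The reason your shortcut fails is that $T_s^{0,1}$ is invariant only under the reflection $x\mapsto-x$, not under the $120^\circ$ rotation: the rotation acts on the two-dimensional $\mu_2$-eigenspace by the standard representation of $D_3$, mixing $T_s^{0,1}$ with $T_a^{0,1}$. (If you want a genuine symmetry argument, note that $T_s+T_s\circ\rho+T_s\circ\rho^2=0$ for the rotation $\rho$, since the rotation has no nonzero fixed vector in that representation, whence $3\int_{\partial\Omega}T_s\,d\sigma=0$.) So keep the full three-edge computation, or replace the shortcut by the representation-theoretic argument; as written, the ``bottom edge only'' step would lead you to a false nonzero answer.
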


Motivated by \cite{Mc}, we can construct an explicit solution to \eqref{cequation} when $\Omega$ is an equilateral triangle and $c=\mu_2$. Then as a corollary of Proposition \ref{gensufintro}, we find the exact constant $\kappa_1(\Omega)$ when $\Omega$ is an equilateral triangle.
\begin{theorem}
\label{equilateralconstant}
Let $\Omega$ be an equilateral triangle with side length $a$. Then 
\begin{align*}
    \kappa_1=\mu_2=\frac{16\pi^2}{9a^2}.
\end{align*}
Moreover, their eigenspaces are the same, with respect to boundary mean zero Laplacian with constant Neumann data and the Neumann Laplacian.
\end{theorem}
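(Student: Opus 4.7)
The plan is to exhibit a single explicit solution $u_{\mu_2}$ to \eqref{cequation} on the equilateral triangle at the resonant value $c=\mu_2$, verify that its boundary integral is strictly positive, and then invoke Proposition \ref{byproduct1} to conclude both $\kappa_1=\mu_2$ and the coincidence of eigenspaces. By the usual scaling $x\mapsto \lambda x$, it suffices to treat the reference triangle with vertices $(-1,0)$, $(1,0)$, $(0,\sqrt{3})$, on which $\mu_2=4\pi^2/9$; the general case follows from $\kappa_1(\lambda\Omega)=\lambda^{-2}\kappa_1(\Omega)$ and yields the announced value $16\pi^2/(9a^2)$.

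Existence of $u_{\mu_2}$ is already granted by Corollary \ref{ext}. Moreover, any two solutions of \eqref{cequation} at $c=\mu_2$ differ by a Neumann eigenfunction of $\mu_2$, and by \eqref{equisatisfy} every such eigenfunction has zero boundary mean; hence $\int_{\partial\Omega}u_{\mu_2}\,d\sigma$ is independent of the representative, and it suffices to compute it for any convenient explicit choice.

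For the construction I would follow the Lam\'e/McCartin framework of \cite{Mc} and superpose three Helmholtz plane waves of wave number $2\pi/3$ whose wave vectors $\omega_0,\omega_1,\omega_2$ are related by $120^\circ$ rotations about the centroid of $\Omega$. Concretely, I would seek an ansatz of the form
$$u_{\mu_2}(x,y)=A+\sum_{k=0}^{2}\bigl[\alpha\cos(\omega_k\cdot(x,y))+\beta\sin(\omega_k\cdot(x,y)+\theta)\bigr],$$
which is three-fold symmetric, so that imposing $\partial_\nu u_{\mu_2}=-1$ on the bottom side $y=0$ automatically propagates the same condition to the other two sides. The free parameters $A,\alpha,\beta,\theta$ are then fixed by (i) forcing the $x$-dependence of $\partial_y u_{\mu_2}|_{y=0}$ to cancel via product-to-sum identities and (ii) matching the surviving constant to $-1$. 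With $u_{\mu_2}$ in closed form, computing $\int_{\partial\Omega}u_{\mu_2}\,d\sigma$ reduces to elementary integrals of $\cos$ and $\sin$ over the three sides; the constant part contributes a positive multiple of $P(\Omega)$ while the plane-wave parts contribute equally on each side by symmetry, and the algebra should deliver a strictly positive total.

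Once positivity of the boundary integral is in hand, Proposition \ref{byproduct1} closes the argument: the existence hypothesis holds by construction, the sign condition \eqref{wentia} is what has just been verified, and the proposition delivers both $\kappa_1=\mu_2$ and the identification of the minimizers of \eqref{defofkappa} with Neumann eigenfunctions of $\mu_2$. The main obstacle is the construction itself: since $T_s^{0,1}$ and $T_a^{0,1}$ satisfy the homogeneous Neumann condition, a particular solution must genuinely live outside the $\mu_2$-eigenspace, and the delicate bookkeeping of phases among the three rotated plane waves is what makes the constant-Neumann condition attainable while simultaneously forcing the boundary mean to the correct (positive) side of zero.
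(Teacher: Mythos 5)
Your proposal is correct and follows essentially the same route as the paper: the paper's proof also constructs an explicit solution at $c=\mu_2$ on the reference triangle, namely \eqref{equisolution}, which is precisely the three-fold-symmetric superposition of plane waves of wave number $2\pi/3$ that your ansatz describes, checks that its boundary mean is positive, and then invokes Proposition \ref{byproduct1}. The only thing you leave implicit is the closed form itself and the final sign check, both of which the paper supplies by direct verification.
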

\begin{proof}
Without loss of generality, we assume that $a=2$, and let $\Omega$ be the equilateral triangle with vertices with the same coordinates as before. We construct the following function:
\begin{align}
    \label{equisolution}
    u(x,y)=\frac{1}{\frac{2\pi}{3}\sin(\frac{\pi}{\sqrt{3}})}\left(2\cos(\pi x/\sqrt{3})\cos(\pi y/3))+\cos(\frac{\pi}{\sqrt{3}}-\frac{2}{3}\pi y)\right).
\end{align}
One can check that the above $u$ satisfies \eqref{cequation} for $c=\mu_2$, and $\int_{\partial \Omega}u_c\, \rm{d} \sigma>0$. Therefore by Proposition \ref{byproduct1}, we conclude that $\kappa_1=\mu_2$ and that they have the same eigenspaces.
\end{proof}

\begin{corollary}
Let $\Omega$ be a triangle and $\Omega^*$ be an equilateral triangle with the same perimeter as $\Omega$. Then $\kappa_1(\Omega)\le \kappa_1(\Omega^*)$.
\end{corollary}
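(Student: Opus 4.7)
The strategy is to stack the inequalities established earlier in the paper with the sharp Neumann isoperimetric bound from \cite{LS09}. First I would apply Theorem \ref{maintheorem}, which gives $\kappa_1(\Omega)\le \mu_2(\Omega)$ for any bounded Lipschitz domain, and Theorem \ref{equilateralconstant}, which gives $\kappa_1(\Omega^*)=\mu_2(\Omega^*)$ since $\Omega^*$ is equilateral. This reduces the corollary to the Neumann eigenvalue comparison
\[
\mu_2(\Omega)\le \mu_2(\Omega^*)\quad\text{whenever } P(\Omega^*)=P(\Omega).
\]

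Second, I would invoke the result of Laugesen--Siudeja in \cite{LS09}, which asserts that among all triangles of prescribed diameter the equilateral triangle maximizes $\mu_2$. For any triangle $T$ the diameter $d(T)$ equals the longest side, and so $d(T)\ge P(T)/3$, with equality if and only if $T$ is equilateral. Let $E_d$ denote the equilateral triangle of diameter $d(\Omega)$. By \cite{LS09} we have $\mu_2(\Omega)\le \mu_2(E_d)$, and since $\Omega^*$ is a scaled copy of $E_d$ with side length $P(\Omega)/3\le d(\Omega)$, the scaling $\mu_2(\lambda T)=\lambda^{-2}\mu_2(T)$ yields $\mu_2(E_d)\le \mu_2(\Omega^*)$. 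Concatenating the three inequalities produces
\[
\kappa_1(\Omega)\le \mu_2(\Omega)\le \mu_2(E_d)\le \mu_2(\Omega^*)=\kappa_1(\Omega^*),
\]
which is exactly the desired conclusion.

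The proof is essentially an assembly, and no step is substantially difficult. The only piece that requires care is translating the diameter-normalization in \cite{LS09} into the perimeter-normalization used in the statement, which is handled by the elementary inequality $d(T)\ge P(T)/3$ for triangles. If a perimeter-normalized version of the Laugesen--Siudeja inequality is available and cited in place of the diameter version, this bookkeeping step disappears entirely and the argument collapses to the single chain displayed above.
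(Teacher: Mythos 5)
Your overall reduction is the same as the paper's: use $\kappa_1(\Omega)\le\mu_2(\Omega)$ and $\kappa_1(\Omega^*)=\mu_2(\Omega^*)$ (Theorem \ref{equilateralconstant}) to reduce everything to the Neumann comparison $\mu_2(\Omega)\le\mu_2(\Omega^*)$ for equal perimeters. The problem is the input you feed into that reduction. Laugesen--Siudeja \cite{LS09} does \emph{not} prove that the equilateral triangle maximizes $\mu_2$ among triangles of prescribed diameter; their sharp result is the perimeter-normalized (equivalently area-normalized) maximization, i.e.\ exactly the inequality $\mu_2(\Omega)\le\mu_2(\Omega^*)$ that the paper cites directly. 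The diameter-normalized statement you use is in fact false, so your intermediate inequality $\mu_2(\Omega)\le\mu_2(E_d)$ fails. Concretely, take $\Omega$ to be the right isosceles triangle with legs of length $1$; its Neumann eigenfunctions are the restrictions of the swap-symmetric Neumann eigenfunctions of the unit square, so $\mu_2(\Omega)=\pi^2$ (eigenfunction $\cos\pi x+\cos\pi y$), while $d(\Omega)=\sqrt2$ and the equilateral triangle $E_d$ of side $\sqrt2$ has $\mu_2(E_d)=\frac{16\pi^2}{9\cdot 2}=\frac{8\pi^2}{9}<\pi^2$. Equivalently, $\mu_2 D^2=2\pi^2>\frac{16\pi^2}{9}$ for this triangle, so the equilateral is not the diameter-normalized maximizer.

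The repair is exactly what you anticipate in your closing sentence: the perimeter-normalized version is what \cite{LS09} actually establishes, so you should cite it directly and delete the detour through $E_d$ and the inequality $d(T)\ge P(T)/3$. With that substitution your chain
\begin{align*}
\kappa_1(\Omega)\le\mu_2(\Omega)\le\mu_2(\Omega^*)=\kappa_1(\Omega^*)
\end{align*}
is precisely the paper's proof. As written, however, the argument rests on a false lemma and is not correct.
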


\begin{proof}
According to \cite{LS09}, 
\begin{align}
\label{mu2tri}
    \mu_2(\Omega)\le \mu_2(\Omega^*).
\end{align}
Hence by Theorem \ref{equilateralconstant}, 
\begin{align*}
    \kappa_1(\Omega)\le \mu_2(\Omega)\le \mu_2(\Omega^*)=\kappa_1(\Omega^*).
\end{align*}
\end{proof}

The next proposition says that the equality of \eqref{planar} cannot be achieved at equilateral triangles.
\begin{proposition}
\label{jin1}
Let $\Omega$ be an equilateral triangle, $c \in (0,\mu_2]$, and $u_c$ be a solution to \eqref{cequation}. Then
\begin{align}
\label{stt}
    \lim_{c\rightarrow \mu_2^-}c\int_{\partial \Omega}u_c\, \rm{d} \sigma=\mu_2\int_{\partial \Omega}u_{\mu_2}\, \rm{d} \sigma >\frac{1}{2}\frac{P^2(\Omega)}{|\Omega|}.
\end{align}
\end{proposition}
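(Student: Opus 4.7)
The plan is to use the explicit solution \eqref{equisolution} to compute $\mu_2\int_{\partial\Omega}u_{\mu_2}\,d\sigma$ in closed form and then verify that the resulting number exceeds $\tfrac{1}{2}P^2(\Omega)/|\Omega|$. To handle the limit in \eqref{stt}, note that Corollary \ref{ext} provides existence of $u_{\mu_2}$, while \eqref{equisatisfy} shows that any Neumann eigenfunction of $\mu_2$ has zero boundary mean, so $\int_{\partial\Omega}u_{\mu_2}\,d\sigma$ is intrinsically defined. Convergence of $c\int_{\partial\Omega}u_c\,d\sigma$ as $c\to\mu_2^-$ then follows from the Fredholm alternative: writing $u_c = v_c + \alpha_c\phi_2$ with $v_c$ orthogonal in $L^2$ to the $\mu_2$-eigenspace, the component $\alpha_c\phi_2$ contributes zero boundary mean while $v_c$ converges in $H^1(\Omega)$ to a solution at $c=\mu_2$.

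Next, I would rescale to side length $a=2$, so that $|\Omega|=\sqrt{3}$, $P(\Omega)=6$, $\mu_2=4\pi^2/9$, and $\tfrac{1}{2}P^2/|\Omega|=6\sqrt{3}$. Since the function in \eqref{equisolution} is even in $x$, the boundary integral splits as the integral over the bottom edge plus twice the integral over the slanted edge from $(1,0)$ to $(0,\sqrt{3})$, which I would parametrize by $t\mapsto(1-t,\sqrt{3}t)$, $t\in[0,1]$, with $ds=2\,dt$. The product-to-sum identity $2\cos A\cos B=\cos(A-B)+\cos(A+B)$ collapses the cross terms, and after elementary trigonometric integrations the answer simplifies to
\[
\mu_2\int_{\partial\Omega}u_{\mu_2}\,d\sigma \;=\; 8\sqrt{3} + 4\pi\cot\!\left(\frac{\pi}{\sqrt{3}}\right).
\]

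The inequality \eqref{stt} then reduces to $4\pi\cot(\pi/\sqrt{3})>-2\sqrt{3}$, and since $\pi/\sqrt{3}\in(\pi/2,\pi)$ forces $\cot(\pi/\sqrt{3})<0$, this is in turn equivalent to $\sqrt{3}\,|\tan(\pi/\sqrt{3})|>2\pi$. Setting $x:=\pi/\sqrt{3}-\pi/2=\pi(2\sqrt{3}-3)/6\in(0,\pi/2)$ and using $\tan(\pi/2+x)=-\cot x$, the claim becomes $\tan(x)<\sqrt{3}/(2\pi)$.

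The main obstacle is a rigorous verification of this final trigonometric inequality, since $\pi/\sqrt{3}$ is not a standard angle. The elementary bounds $\sin t<t$ and $\cos t>1-t^2/2$ for $t\in(0,\sqrt{2})$ yield $\tan(x)<x/(1-x^2/2)$, and substitution of $x=\pi(2\sqrt{3}-3)/6$ reduces the claim to the polynomial inequality $\pi^2(69\sqrt{3}-108)<72\sqrt{3}$, which can be checked from the explicit bounds $\pi^2<9.87$ and $1.73<\sqrt{3}<1.7321$ (the left side is then less than $9.87\cdot 11.52<114$, while the right side exceeds $72\cdot 1.73>124$).
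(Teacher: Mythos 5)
Your proposal is correct, and it diverges from the paper's argument in an instructive way. For the limit equality, the paper does not use a spectral decomposition: it works with the penalized functional $\kappa(m,\Omega)$ from the proof of Theorem \ref{main2}, shows by a symmetry argument that the threshold $m_0$ is strictly positive (so that $\int_{\partial\Omega}u_m\,d\sigma$ stays away from zero), and identifies the weak limit of $u_c$ with \eqref{equisolution} using invariance under the triangle's symmetry group. Your route via the Fredholm alternative is cleaner and more standard: expanding in Neumann eigenfunctions gives $\hat u_c(k)=-\bigl(\int_{\partial\Omega}\phi_k\,d\sigma\bigr)/(\mu_k-c)$, and since by \eqref{equisatisfy} every $\mu_2$-eigenfunction has zero boundary mean, the singular coefficients actually vanish (so your $\alpha_c$ is identically zero) and the remaining series is dominated, uniformly in $c$, by the coefficients of the solution at $c=\mu_2$ whose existence is Corollary \ref{ext}; it would be worth one extra sentence to record this domination, since "$v_c$ converges in $H^1$" is currently asserted rather than derived. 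For the strict inequality, the paper only says it is "straightforward from computation," whereas you actually carry it out: I verified that the boundary integral of \eqref{equisolution} over each of the three edges equals $\frac{4\sqrt{3}}{\pi}\sin(\pi/\sqrt{3})+2\cos(\pi/\sqrt{3})$, that the resulting closed form is indeed $8\sqrt{3}+4\pi\cot(\pi/\sqrt{3})$, that the reduction to $\pi^2(69\sqrt{3}-108)<72\sqrt{3}$ is algebraically exact, and that your interval arithmetic ($113.71<124.56$) closes the argument. Your write-up therefore supplies a fully rigorous verification of a step the paper leaves to the reader, at the cost of a slightly under-justified convergence claim that is easily repaired.
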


\begin{proof}
The second strict inequality in \eqref{stt} is straightforward from computation, since the explicit construction of a solution to \eqref{cequation} when $c=\mu_2$ and $\Omega$ is an equilateral triangle is given, see \eqref{equisolution}. 

To prove the limit equality, we let $\kappa(m,\Omega)$, $u_m$ and $m_0$ be defined as in the proof of Theorem \ref{main2}. We first show that if $\Omega$ is an equilateral triangle, then $m_0>0$.  Indeed, let $m_k$ be a decreasing sequence such that $m_k \rightarrow m_0$ as $k\rightarrow \infty$. Then up to a subsequence, $u_{m_k}$ converges weakly to some $u\in H^1(\Omega)$. If $m_0=0$, then $u$ has mean zero over $\partial \Omega$, and the Reileigh quotient of $u$ is equal to $\kappa_1$. By Theorem \ref{equilateralconstant}, $\kappa_1=\mu_2$ and their eigenspaces are the same, and hence $u$ must be a linear combination of \eqref{sym} and \eqref{ant}. However, since $u_{m_k}$ is symmetric with respect to the three lines of symmetry of $\Omega$, so is the limit $u$ of $u_{m_k}$, while any linear combination of \eqref{sym} and \eqref{ant} is not so. Hence we reach a contradiction.

The above argument essentially shows that from right as $m\rightarrow m_0>0$, $\int_{\partial \Omega}u_m\, \rm{d} \sigma$ cannot have a subsequence converging to $0$. For any $c \in (0,\mu_2)$, we can find $m>m_0$ such that $\kappa(m,\Omega)=c$. Hence $u_c:=\tfrac{m u_{m}}{\int_{\partial \Omega}u_m\, \rm{d} \sigma}$ is a uniformly bounded sequence in $H^1(\Omega)$ for $m>m_0$, and such $u_c$ is symmetric with respect to the three lines of symmetry of $\Omega$. Hence up to a constant factor, any subsequence of $u_c$ must converge to the function given by \eqref{equisolution} weakly in $H^1(\Omega)$ and thus due to the $L^2$ trace embedding theory, the limit equality in \eqref{stt} is valid for a further subsequence. Due to the arbitrary choice of subsequence of $u_c$ as $ c\rightarrow \mu_2^-$, we conclude that the first equality in \eqref{stt} is valid.
\end{proof}

\section{Application in concentration breaking on a thermal insulation problem}
In this section, we prove Theorem \ref{q2main} and give quantitative estimates on breaking thresholds for balls and rectangular boxes.

\begin{proof}[Proof of Theorem \ref{q2main}]
Let $u_m$ be as in the hypothesis of Theorem \ref{q2main}, that is, $u_m$ is a function where the following infimum is attained:
\begin{align}
    \label{lambdam'}
\lambda_m=\inf\Big\{\frac{\int_{\Omega}|\nabla u|^2dx+\frac{1}{m}\left(\int_{\partial \Omega}|u|\rm{d} \sigma\right)^2}{\int_{\Omega}u^2dx}:u\in H^1(\Omega)\setminus \{0\}\Big\}.
\end{align}
We may assume that $u_m \ge 0$ and $\int_{\Omega}u_m^2\, dx=1$.

We first prove the first statement. By Theorem \ref{maintheorem}, $\kappa_1 \le \mu_2$, and thus $\kappa_1$ is strictly less than $\lambda_D$, the first eigenvalue of Dirichlet Laplacian. Since as $m\rightarrow 0$, \eqref{lambdam'} tends to $\lambda_D$, and as $m\rightarrow \infty$, \eqref{lambdam'} tends to $0$, we can find $m_0>0$ such that $\lambda_{m_0}=\kappa_1$.

Let $w$ be the function such that the infimum in \eqref{defofkappa} is achieved. Without loss of generality we assume that $\int_{\Omega}w^2\, dx=1$. We argue by contradiction. Suppose that $u_m>0$ on $\partial \Omega$, then consider $u_m+\epsilon w$ as a trial function for the $\lambda_m(\cdot)$ problem. Here $\epsilon$ is chosen such that $u_m+\epsilon w>0$ on $\partial \Omega$. Let 
\begin{align}
\label{jihao}
    R_m(f):=\frac{\int_{\Omega}|\nabla f|^2\,dx+\frac{1}{m}\left(\int_{\partial \Omega}|f|\,\rm{d} \sigma\right)^2}{\int_{\Omega}f^2\,dx}.
\end{align}
On the one hand, $R_m(u+\epsilon w)\ge \lambda_m$.
On the other hand, $u_m>0$ on $\partial \Omega$ implies that $\partial_\nu u_m$ is a constant on $\partial \Omega$, and thus $\int_{\Omega}\nabla u_m \nabla w \,dx=\lambda_m\int_{\Omega}u_mw\, dx$. Since $\lambda_m>\kappa_1$ as $m<m_0$, from above we have
\begin{align*}
    R_m(u_m+\epsilon w)=&\frac{\int_{\Omega}|\nabla u_m|^2dx+2\epsilon \lambda_m \int_{\Omega}u_mw\,dx+\epsilon^2\int_{\Omega}|\nabla w|^2 \, dx +\frac{1}{m}\left(\int_{\partial \Omega}u_m\,\rm{d} \sigma\right)^2}{\int_{\Omega}u_m^2dx+2\epsilon \int_{\Omega}u_mw\, dx+\epsilon^2\int_{\Omega}w^2\, dx}\\
    =&\frac{\lambda_m+2\epsilon \lambda_m \int_{\Omega}u_mw\,dx+\epsilon^2\kappa_1}{1+2\epsilon \int_{\Omega}u_mw\, dx+\epsilon^2}\\
    <& \frac{\lambda_m+2\epsilon \lambda_m \int_{\Omega}u_mw\,dx+\epsilon^2\lambda_m}{1+2\epsilon \int_{\Omega}u_mw\, dx+\epsilon^2}=\lambda_m.
\end{align*}This leads to a contradiction to the choice of $m_0$. Therefore, we conclude that when $m<m_0$, $u$ must vanish on some portion of $\partial \Omega$.

\medskip

Next, we prove the second statement. First, since $u_c>0$ for all $c \in (0,\mu_2)$, by Theorem \ref{maintheorem}, we have $\kappa_1=\mu_2$. Let $m_0$ be as above, then by what we have proved, when $m<m_0$, $u$ must vanish on some portion of $\partial \Omega$.

To show that $u_m>0$ when $m>m_0$, we first proceed the same argument as in the proof of Theorem \ref{main2} by considering the following auxiliary functional
\begin{align}
\label{auxfunctional}
    \kappa(m,\Omega):=\inf\Big\{\frac{\int_{\Omega}|\nabla u|^2\, dx+\frac{1}{m}\left(\int_{\partial \Omega}u\, \rm{d} \sigma\right)^2 }{\int_{\Omega}u^2 \, dx}: u \in H^1(\Omega) \setminus\{0\}\Big\}.  
\end{align}
 Therefore,
\begin{align*}
    \lim_{m\rightarrow \infty}\kappa(m,\Omega)=0\quad \mbox{and}\quad  \lim_{m\rightarrow 0}\kappa(m,\Omega)=\kappa_1=\mu_2.
\end{align*}
Let 
\begin{align}
    \label{m1}
m_a:=\inf\{m>0: \kappa(m,\Omega)<\mu_2\}.    
\end{align}
Since $\lambda(m,\Omega) \ge \kappa(m,\Omega)$, when $m>m_0$, $\kappa(m,\Omega)<\mu_2$, and hence $m_0 \ge m_a$. Given $m>m_a$, let $u_m'$ be the function at which the infimum in \eqref{auxfunctional} is attained. Then $u_m'$ satisfies the Euler-Lagrange equation
\begin{align}
\label{auxEL}
    \begin{cases}
    -\Delta u_m'=\kappa(m,\Omega) u_m' \quad & \mbox{in $\Omega$}\\
    \frac{\partial u_m'}{\partial \nu}=-\frac{1}{m}\int_{\partial \Omega}u_m' \, \rm{d} \sigma \quad &\mbox{on $\partial \Omega$}.
    \end{cases}
\end{align}
Since $\kappa(m,\Omega)<\mu_2$, $u_c:=\frac{mu_m'}{\int_{\partial \Omega}u_m' \, \rm{d} \sigma}$ is the unique solution to \eqref{cequation} for $c=\kappa(m,\Omega)$. Since $u_c>0$ on $\partial \Omega$ by hypothesis, $u_m'$ does not change sign on $\partial \Omega$. We may assume that $u_m'>0$ on $\partial \Omega$. If for some $m>m_0$, $u_m$ vanishes on some portion of $\partial \Omega$, then $u_m$ cannot satisfy \eqref{auxEL}, and then
\begin{align}
    \label{con1}
\lambda_m=\frac{\int_{\Omega}|\nabla u_m|^2\, dx+\frac{1}{m}\left(\int_{\partial \Omega}u_m\, \rm{d} \sigma\right)^2 }{\int_{\Omega}u_m^2 \, dx}>\frac{\int_{\Omega}|\nabla u_m'|^2\, dx+\frac{1}{m}\left(\int_{\partial \Omega}u_m'\, \rm{d} \sigma\right)^2 }{\int_{\Omega}(u_m')^2 \, dx}=\kappa(m,\Omega).
\end{align}
Since $u_m'>0$ on $\partial \Omega$, $|u_m'|=u_m'$ on $\partial \Omega$, and hence 
\begin{align}
    \label{con2}
\kappa(m,\Omega)=\frac{\int_{\Omega}|\nabla u_m'|^2\, dx+\frac{1}{m}\left(\int_{\partial \Omega}u_m'\, \rm{d} \sigma\right)^2 }{\int_{\Omega}(u_m')^2 \, dx}=\frac{\int_{\Omega}|\nabla u_m'|^2\, dx+\frac{1}{m}\left(\int_{\partial \Omega}|u_m'|\, \rm{d} \sigma\right)^2 }{\int_{\Omega}(u_m')^2 \, dx}\ge \lambda_m.
\end{align}
Hence \eqref{con1}-\eqref{con2} lead to a contradiction. Therefore, we have proved that when $m>m_0$, $u_m>0$.

\medskip

Now we prove the third statement. Let $c_0 \in (0,\mu_2)$ be as in Remark \ref{strictlypositive}. We may assume that $c_0<\kappa_1$. Let $m_0'>0$ be such that $\lambda_{m_0'}=c_0$, and $u_m'$ be the function as above. Hence when $m>m_0'$, $u_m'>0$ on $\partial \Omega$. A similar argument to the proof of the second statement implies that $u_m>0$ when $m>m_0'$.
\end{proof}


Note that from $\Omega \in \mathcal{F}$, the concentration breaking threshold $m_0$ is characterized as the number such that $\lambda_{m_0}=\mu_2$. Hence when $m>m_0$, 
\begin{align*}
    m\lambda_m=c\int_{\partial \Omega}u_c\, \rm{d}\sigma,
\end{align*}where $u_c$ is a solution to \eqref{cequation} with $c=\lambda_m<\mu_2$. Hence the following two corollaries, which give quantitative estimates for breaking thresholds for balls and rectangular boxes, are immediate from Theorem \ref{zheng1}, Theorem \ref{zheng2} and Theorem \ref{q2main}.
\begin{corollary}
\label{quan1}
Let $\Omega$ be a ball domain in $\mathbb{R}^n$ or a square in $\mathbb{R}^2$ and $m_0=m_0(\Omega)$ be the concentration breaking threshold in Theorem \ref{q2main}. Then
\begin{align}
\label{quantiball}
    m_0(\Omega)= \frac{n-1}{n\mu_2(\Omega)}\frac{P^2(\Omega)}{|\Omega|}.
\end{align}
\end{corollary}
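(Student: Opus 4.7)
The plan is to combine Theorem \ref{q2main}(2) with the asymptotic formulas of Theorem \ref{zheng1} and Theorem \ref{zheng2}. First I would verify that balls and squares indeed lie in the class $\mathcal{F}$, so that Theorem \ref{q2main}(2) applies and $m_0$ is characterized as the unique $m>0$ at which $\lambda_m=\mu_2$. For balls this follows from the explicit radial formula $u_c(r)=a_c r^{1-n/2}J_{n/2-1}(\sqrt{c}r)$ obtained in the proof of Proposition \ref{ballmotivation}, which is strictly positive on $\partial\Omega$ throughout the range $c\in(0,\mu_2)$. For rectangular boxes (in particular squares) the explicit formula \eqref{recsolution} shows $u_c$ is a sum of products of strictly positive cosines and strictly positive sines when $\sqrt{c}a_j\le\sqrt{c}a_1<\pi/2$, hence again $u_c>0$ on $\partial\Omega$.

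Next I would exploit the key identity preceding the corollary: whenever $\Omega\in\mathcal{F}$ and $m>m_0$, the minimizer $u_m$ is strictly positive on $\partial\Omega$, so its outer normal derivative is the constant $-\frac{1}{m}\int_{\partial\Omega}u_m\,d\sigma$, and the rescaling $u_c:=m u_m/\int_{\partial\Omega}u_m\,d\sigma$ solves \eqref{cequation} with $c=\lambda_m$. Evaluating $\int_{\partial\Omega}u_c\,d\sigma=m$ then produces the crucial relation
\begin{equation*}
m\lambda_m \;=\; c\int_{\partial\Omega}u_c\,d\sigma,\qquad m>m_0,\ c=\lambda_m.
\end{equation*}

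Then I would let $m\downarrow m_0$. Since $\lambda_m$ is continuous and strictly decreasing in $m$ and $\lambda_{m_0}=\mu_2$, the parameter $c=\lambda_m$ tends to $\mu_2^-$. Passing to the limit on the right-hand side and invoking Theorem \ref{zheng1} for balls, or the equality case of Theorem \ref{zheng2} for cubes (which applies to squares, the case $n=2$ and $a_1=a_2$), we obtain
\begin{equation*}
m_0\,\mu_2(\Omega)\;=\;\lim_{c\to\mu_2^-}c\int_{\partial\Omega}u_c\,d\sigma\;=\;\frac{n-1}{n}\frac{P^2(\Omega)}{|\Omega|},
\end{equation*}
and dividing by $\mu_2(\Omega)$ yields \eqref{quantiball}.

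The main subtlety will be the limit $c\to\mu_2^-$ of $c\int_{\partial\Omega}u_c\,d\sigma$, since in the degenerate case $c=\mu_2$ the equation \eqref{cequation} ceases to have a unique solution. For balls this is handled by the explicit Bessel-function computation in Proposition \ref{ballmotivation}, while for squares the explicit trigonometric expression \eqref{jisuan1} makes the limit transparent and reduces to the equality case $a_1=\cdots=a_n$ of \eqref{jianrenzhichang}. Continuity of $m\mapsto\lambda_m$ and hence of $\lambda_m\mapsto m$ near $m_0$ is standard for the shape functional in \eqref{lambdam'}, so the limiting identity is justified.
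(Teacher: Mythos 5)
Your proposal is correct and follows essentially the same route as the paper: the paper likewise characterizes $m_0$ by $\lambda_{m_0}=\mu_2$ for $\Omega\in\mathcal{F}$, uses the identity $m\lambda_m=c\int_{\partial\Omega}u_c\,d\sigma$ for $m>m_0$ with $c=\lambda_m$, and passes to the limit $c\to\mu_2^-$ via Theorem \ref{zheng1} for balls and the equality case of Theorem \ref{zheng2} for squares. Your write-up merely makes explicit the membership of balls and boxes in $\mathcal{F}$ and the continuity/monotonicity of $\lambda_m$, which the paper treats as immediate.
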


\begin{corollary}
\label{quan2}
Let $\Omega$ be a rectangular box in $\mathbb{R}^n$ and $m_0=m_0(\Omega)$ be the concentration breaking threshold in Theorem \ref{q2main}. Then
\begin{align}
\label{quantiball}
    m_0(\Omega)\ge  \frac{n-1}{n\mu_2(\Omega)}\frac{P^2(\Omega)}{|\Omega|}.
\end{align}
\end{corollary}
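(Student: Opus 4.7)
The plan is to package together Theorem \ref{q2main}(2), the discussion immediately preceding the corollary, and the sharp bound of Theorem \ref{zheng2}. The only substantive step is verifying that any rectangular box belongs to the class $\mathcal{F}$ defined in \eqref{F}; the rest is essentially arithmetic.

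First I would check that rectangular boxes lie in $\mathcal{F}$. For $\Omega = \prod_{i=1}^n(-a_i,a_i)$ with $a_1 = \max_i a_i$, recall from \eqref{recsolution} that
\[
u_c(x) = \frac{1}{\sqrt{c}}\sum_{i=1}^n \frac{\cos(\sqrt{c}\,x_i)}{\sin(\sqrt{c}\,a_i)}
\]
is the unique solution to \eqref{cequation} for $c \in (0,\mu_2)$. Since $\mu_2 = (\pi/(2a_1))^2$, one has $\sqrt{c}\,a_i \le \sqrt{c}\,a_1 < \pi/2$ for every $i$, so each summand is strictly positive on $\bar{\Omega}$. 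Hence $u_c > 0$ throughout $\bar{\Omega}$, and in particular on $\partial \Omega$, proving $\Omega \in \mathcal{F}$.

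Theorem \ref{q2main}(2) therefore applies. Combined with the remark preceding the corollary, the breaking threshold is characterized by $\lambda_{m_0} = \mu_2$, and for every $m > m_0$ one has the identity $m\lambda_m = c\int_{\partial \Omega}u_c\,d\sigma$ with $c = \lambda_m \in (0,\mu_2)$. The continuity of $m \mapsto \lambda_m$ at $m_0$ (which follows from the monotone dependence exploited in the proof of Theorem \ref{main2}, transferred from $\kappa(m,\Omega)$ to $\lambda_m$ since they coincide for $m > m_0$ when $\Omega \in \mathcal{F}$) then gives
\[
m_0\mu_2 \;=\; \lim_{m \searrow m_0} m\lambda_m \;=\; \lim_{c \nearrow \mu_2} c\int_{\partial \Omega}u_c\,d\sigma.
\]
Invoking Theorem \ref{zheng2} to bound the limit on the right from below by $\frac{n-1}{n}\frac{P^2(\Omega)}{|\Omega|}$ and dividing through by $\mu_2$ yields the claimed estimate. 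Since Theorem \ref{zheng2} is sharp precisely at cubes, the inequality in \eqref{quantiball} becomes equality for cubes and is strict for every non-cube rectangular box, which is consistent with (and extends the first half of) Corollary \ref{quan1}.

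There is no real obstacle in this argument; all the heavy lifting sits in the prior results, namely the inequality \eqref{nandian} packaged as Theorem \ref{zheng2} and the concentration-breaking dichotomy in Theorem \ref{q2main}(2). The only point that deserves a line of care is the left-continuity of $\lambda_m$ at $m_0$, but this is standard from the variational definition \eqref{lambdam'} and the uniform $H^1$-bound on the minimizers.
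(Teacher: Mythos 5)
Your proposal is correct and follows the same route as the paper: the corollary is obtained by combining the characterization $\lambda_{m_0}=\mu_2$ and the identity $m\lambda_m=c\int_{\partial\Omega}u_c\,d\sigma$ for $m>m_0$ (valid once $\Omega\in\mathcal{F}$) with the lower bound of Theorem \ref{zheng2} in the limit $c\to\mu_2^-$. Your explicit verification that rectangular boxes lie in $\mathcal{F}$ via the formula \eqref{recsolution}, and the remark on left-continuity of $\lambda_m$ at $m_0$, are details the paper leaves implicit but are handled correctly.
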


\section{Numerical results}
In this section, we provide numerical results on isosceles triangles, regular polygons, ellipses and rhombuses.

In the following, we call an isosceles triangle super-equilateral triangle, if the aperture is bigger than or equal to $\frac{\pi}{3}$, and we call an isosceles triangle sub-equilateral triangle, if the aperture is less than or equal to $\frac{\pi}{3}$.  By aperture, we mean the angle between two equal sides. Let $u_c$ be the solution to \eqref{cequation} when $c \in (0,\mu_2)$, then numerical analysis suggests several very interesting observations as follows.

\begin{enumerate}
\item Let $\Omega$ be a regular polygon in $\mathbb{R}^2$. If the polygon has at least $4$ sides, then
    \begin{align}
    \label{denghao}
        \lim_{c \rightarrow \mu_2}c\int_{\partial \Omega} u_c\, \rm{d} \sigma =\frac{1}{2}\frac{P^2(\Omega)}{|\Omega|}. 
    \end{align}
In particular, by Proposition \ref{gensufintro}, this suggests that for regular polygons with at least 4 sides, $\kappa_1=\mu_2$. (Recall that we have shown that \eqref{denghao} is not true for equilateral triangles.)
\item Let $\Omega$ be a super-equilateral triangle, then 
\begin{align*}
     \lim_{c \rightarrow \mu_2}c\int_{\partial \Omega} u_c\, \rm{d} \sigma>\frac{1}{2}\frac{P^2(\Omega)}{|\Omega|}.
\end{align*}
Moreover, among all isosceles triangles with aperture bigger than or equal to $\frac{\pi}{3}$, the infimum of left hand side is achieved when $\Omega$ is an equilateral triangle, and the infimum is still strictly bigger than or equal to $\frac{1}{2}\frac{P^2(\Omega)}{|\Omega|}$. In particular, this suggests that for super-equilateral triangles, $\kappa_1=\mu_2$.

\item Let $\Omega$ be a sub-equilateral triangle. Then when $c$ is close to $\mu_2$, $\int_{\partial \Omega}u_c\, \rm{d} \sigma<0$, and thus $\kappa_1<\mu_2$ and \eqref{omegaproperty} cannot be satisfied for $k=2$.

\item Let $\Omega$ be a domain enclosed by an ellipse. Then
\begin{align*}
     \lim_{c \rightarrow \mu_2}c\int_{\partial \Omega} u_c\, \rm{d} \sigma\ge \frac{1}{2}\frac{P^2(\Omega)}{|\Omega|}.
\end{align*}
Moreover, among all elliptic domains, the infimum of left hand side is achieved when $\Omega$ is a disk. This suggests that for ellipses, $\kappa_1=\mu_2$.

\item Let $\Omega$ be a rhombus. Then
\begin{align*}
     \lim_{c \rightarrow \mu_2}c\int_{\partial \Omega} u_c\, \rm{d} \sigma\ge \frac{1}{2}\frac{P^2(\Omega)}{|\Omega|}.
\end{align*}
Moreover, among all rhombuses, the infimum of left hand side is achieved when $\Omega$ is a square. This also suggests that for rhombuses, $\kappa_1=\mu_2$.

\item For any isosceles triangle or rhombus $\Omega$, when $c$ is close to $\mu_2$, $\min_{\partial \Omega} u_c<0$. However, when $\Omega$ is a regular polygon with at least four sides, then $\min_{\partial \Omega}u_c>0$ for any $c \in (0,\mu_2)$. That is, regular polygons with at least four sides belong to the class $\mathcal{F}$ defined in \eqref{F}, while equilateral triangles do not.

\item Elliptic domains also belong to $\mathcal{F}$. More strikingly, among all elliptic domains in $\mathbb{R}^2$ with fixed area, $\inf_{c \in (0,\mu_2(\Omega))}\min_{\partial \Omega}u_c$ achieves the infimum only when $\Omega$ is a disk, and the infimum is positive.

\end{enumerate}

It would be very interesting to prove rigorously the above phenomena suggested by numerical simulation.

\section{Comparison of $\kappa_1$ and $\mu_2$ on sectors}

Recall that in the proof of Proposition \ref{byproduct1}, we show that a necessary condition for $\kappa_1=\mu_2$ to hold is that \eqref{omegaproperty} is true for $k=2$. It is natural to ask whether the converse is true. Even if the above claim is in general false, when focusing on domains with a line of symmetry, motivated by the isosceles triangular example, it is natural to further ask when all modes of $\mu_2$ are antisymmetric, whether we can derive $\kappa_1=\mu_2$. 

These questions are asked out of numerical results on various domains in section 8. In this section, by studying sector examples, we show that the answers to them are false, see Proposition \ref{disprovesector} below. 

We first stipulate some notations. Let $\alpha \in (0,2\pi)$ and $S(\alpha)$ be the sector
$$\{(x,y): 0\le r \le 1, \, |\theta|<\alpha/2\}.$$
Let $\alpha_0$ be the unique value such that
\begin{align}
    \label{alpha0}
j_{1,1}=j'_{\frac{\pi}{\alpha}, 1},   
\end{align}where $j_{s,1}$ denotes the first positive root of Bessel function of first kind of order $s$. 

Since $j_{1,1} \approx 3.8317$, numerical work reveals that $\alpha_0 \approx 1.1748$.

By a standard separation of variables argument, to find $\mu_2\left(S(\alpha)\right)$, one reduces to comparing the following two eigenvalues. First, one has the eigenvalue $j^2_{1,1}$ associated with the nonconstant radial mode $J_0(j_{1,1}r)$. Second, one has the eigenvalue $(j'_{\nu,1})^2$ associated with the angular mode $J_{\nu}(j'_{\nu,1}r)\sin(\nu\theta)$, where $\nu=\pi/\alpha$. This angular mode is odd with respect to the $x$-axis.

Then since $j'_{\nu,1}$ is strictly increasing with respect to $\nu$, we know that when $\alpha<\alpha_0$, $\mu_2\left(S(\alpha)\right)=j^2_{1,1}$. When $\alpha>\alpha_0$, $\mu_2\left(S(\alpha)\right)=(j'_{\frac{\pi}{\alpha},1})^2$. In both cases, $\mu_2$ is simple. Also, when $\alpha=\alpha_0$, $\mu_2$ has multiplicity 2, and there are both even (radial) and odd modes with respect to the $x$-axis.

\begin{lemma}
There exists $\alpha_1>\alpha_0$, such that when $0<\alpha <\alpha_1$, $\kappa_1\left(S(\alpha)\right)<\mu_2\left(S(\alpha)\right)$.
\end{lemma}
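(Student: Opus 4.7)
The plan is to construct, for each $\alpha$ in a right-neighborhood of $0$ that extends past $\alpha_0$, an explicit trial function in $W(S(\alpha))$ whose Rayleigh quotient is strictly below $\mu_2(S(\alpha))$; this will force $\kappa_1(S(\alpha)) < \mu_2(S(\alpha))$. The natural candidate is
\[
w_\alpha := v - c_\alpha, \qquad v(r) := J_0(j_{1,1} r),
\]
where $v$ is the radial Neumann eigenfunction associated with the eigenvalue $j_{1,1}^2$, and $c_\alpha$ is the unique constant making $\int_{\partial S(\alpha)} w_\alpha \, d\sigma = 0$. A direct computation of the boundary integral yields
\[
F(\alpha) := \int_{\partial S(\alpha)} v \, d\sigma = \alpha\, J_0(j_{1,1}) + 2 \int_0^1 J_0(j_{1,1} r)\, dr,
\]
so $c_\alpha = F(\alpha)/P(\partial S(\alpha))$. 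The crucial observation is that $j_{1,1}$ is a zero of $J_1$, so
\[
\int_{S(\alpha)} v\, dx = \alpha \int_0^1 r J_0(j_{1,1} r)\, dr = \frac{\alpha\, J_1(j_{1,1})}{j_{1,1}} = 0.
\]
Consequently the cross terms vanish and one gets $\int_{S(\alpha)} w_\alpha^2 = \|v\|_{L^2}^2 + c_\alpha^2 |S(\alpha)|$, while $\int_{S(\alpha)} |\nabla w_\alpha|^2 = j_{1,1}^2 \|v\|_{L^2}^2$.

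For $\alpha \in (0, \alpha_0]$, one has $\mu_2(S(\alpha)) = j_{1,1}^2$. The Rayleigh quotient of $w_\alpha$ is then $j_{1,1}^2 \|v\|^2 / (\|v\|^2 + c_\alpha^2 |S(\alpha)|)$, which is strictly less than $\mu_2$ exactly when $c_\alpha \neq 0$, i.e. when $F(\alpha) \neq 0$. Since $F$ is linear with slope $J_0(j_{1,1}) < 0$ (recall $J_0(j_{1,1})\approx -0.403$) and intercept $F(0) = 2\int_0^1 J_0(j_{1,1} r)\, dr > 0$, its unique zero $\alpha^*$ is positive. Verifying that $\alpha^* > \alpha_0$ — equivalently, $F(\alpha_0) > 0$ — guarantees $c_\alpha \neq 0$ throughout $(0, \alpha_0]$, and hence $\kappa_1(S(\alpha)) < \mu_2(S(\alpha))$ on this whole interval.

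To push past $\alpha_0$, I would invoke continuity. For $\alpha$ slightly larger than $\alpha_0$, $\mu_2(S(\alpha)) = (j'_{\pi/\alpha,1})^2$, which is continuous in $\alpha$ and equals $j_{1,1}^2$ at $\alpha_0$; the Rayleigh quotient of $w_\alpha$ is also continuous in $\alpha$. Since the strict inequality $R(w_{\alpha_0}) < \mu_2(S(\alpha_0))$ holds at $\alpha = \alpha_0$, it persists on a right-neighborhood of $\alpha_0$, yielding an $\alpha_1 > \alpha_0$ as desired.

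The main obstacle I expect is the quantitative estimate $F(\alpha_0) > 0$, because $\alpha_0$ is defined only implicitly through $j_{1,1} = j'_{\pi/\alpha_0, 1}$. The needed inequality reduces to $\int_0^{j_{1,1}} J_0(t)\, dt > \tfrac{1}{2}\alpha_0\, j_{1,1}\, |J_0(j_{1,1})|$, and establishing it rigorously will require either monotonicity bounds for the McMahon expansions of Bessel zeros together with standard estimates on $\int_0^x J_0$, or a direct comparison of the radial and angular modes through a variational comparison. All other pieces — the explicit form of $w_\alpha$, the vanishing of $\int_\Omega v$, and the continuity step — are essentially routine.
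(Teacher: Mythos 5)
Your proposal is correct and follows essentially the same route as the paper: the same trial function $J_0(j_{1,1}r)$ minus its boundary mean, the same computation showing the Rayleigh quotient drops strictly below $j_{1,1}^2$ whenever $F(\alpha)\neq 0$, and the same continuity argument to pass slightly beyond $\alpha_0$. The one step you flag as an obstacle, $F(\alpha_0)>0$, is settled in the paper by direct numerical evaluation ($F(\alpha)\approx 0.57009-0.40276\,\alpha>0$ for $\alpha<1.4154$, while $\alpha_0\approx 1.1748$), so no finer Bessel-zero estimates are needed.
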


\begin{proof}
\begin{align*}
    \int_{\partial S(\alpha)}J_0(j_{1,1}r)\, ds=&2\int_0^1 J_0(j_{1,1}r)\, dr+\alpha J_0(j_{1,1})\\
    =& \frac{2}{j_{1,1}}\int_0^{j_{1,1}}J_0(r)\, dr+\alpha J_0(j_{1,1})\\
    \approx & 0.57009-0.40276\alpha\\
    >&0, \quad \mbox{when $\alpha<1.4154$.}
\end{align*}
In particular,
\begin{align}
    \label{fuhao1}
\int_{\partial S(\alpha)}J_0(j_{1,1}r)\, ds>0, \quad \mbox{when $\alpha\le \alpha_0\approx  1.1748$.}   
\end{align}
Note that
$$\phi:=J_0(j_{1,1}r)-\frac{1}{|\partial S(\alpha)|}\int_{\partial S(\alpha)}J_0(j_{1,1}r)\, ds$$is a valid trial function in the variational definition \eqref{defofkappa} of $\kappa_1$. When $\alpha\le \alpha_0$, since $J_0(j_{1,1}r)$ is a Neumann Laplacian eigenfunction of $\mu_2$ on $S(\alpha)$, and by \eqref{fuhao1} we have 
\begin{align*}
    \kappa_1\left(S(\alpha)\right)\le \frac{\int_{S(\alpha)}|\nabla J_0(j_{1,1}r)|^2 \, dx}{\int_{S(\alpha)}J_0^2(j_{1,1}r)\, dx+\frac{|S(\alpha)|}{|\partial S(\alpha)|^2}\left(\int_{\partial S(\alpha)}J_0(j_{1,1}r)\, ds\right)^2}<\mu_2\left(S(\alpha)\right).
\end{align*}
Hence when $\alpha$ is slightly larger than $\alpha_0$, the above inequality also holds. 
\end{proof}

\begin{proposition}
\label{disprovesector}
There are sectors such that $\kappa_1<\mu_2$, while all modes of $\mu_2$ are anti-symmetric.
\end{proposition}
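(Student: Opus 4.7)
The proposition follows almost immediately from the preceding lemma together with the classification of $\mu_2$ modes on a sector. The plan is to choose apertures $\alpha$ in the open interval $(\alpha_0,\alpha_1)$, where $\alpha_0$ is the threshold defined in \eqref{alpha0} and $\alpha_1$ is the threshold from the preceding lemma. For any such $\alpha$ the strict inequality $\kappa_1(S(\alpha))<\mu_2(S(\alpha))$ is already furnished by the lemma, so the only point left to verify is that every Neumann Laplacian eigenfunction associated with $\mu_2(S(\alpha))$ is anti-symmetric with respect to the axis of symmetry of the sector.

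For that verification I would appeal to the separation-of-variables catalogue recalled just before the lemma. Since $j'_{\nu,1}$ is strictly increasing in $\nu$, the map $\alpha\mapsto j'_{\pi/\alpha,1}$ is strictly decreasing in $\alpha$. At $\alpha=\alpha_0$ one has $j_{1,1}=j'_{\pi/\alpha_0,1}$ by definition, so for every $\alpha>\alpha_0$ we obtain $(j'_{\pi/\alpha,1})^2<j_{1,1}^2$. Consequently $\mu_2(S(\alpha))=(j'_{\pi/\alpha,1})^2$ is a simple eigenvalue, and its one-dimensional eigenspace is spanned by the angular mode
$$u_{\alpha}(r,\theta)=J_{\pi/\alpha}\bigl(j'_{\pi/\alpha,1}\,r\bigr)\sin(\pi\theta/\alpha),$$
which is odd in $\theta$. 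Hence every $\mu_2$-eigenfunction is anti-symmetric; in particular each satisfies $\int_{\partial S(\alpha)}u_\alpha\,d\sigma=0$, because the arc contribution vanishes by oddness of $\sin(\pi\theta/\alpha)$ and the two straight-edge contributions cancel pairwise.

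Combining these two facts, any aperture $\alpha\in(\alpha_0,\alpha_1)$ yields a sector $S(\alpha)$ on which $\kappa_1<\mu_2$ while every mode of $\mu_2$ is anti-symmetric, which is precisely the statement of the proposition. The only loose end is non-emptiness of $(\alpha_0,\alpha_1)$, i.e. the assertion $\alpha_1>\alpha_0$; this is where the real work sits, and it is carried out inside the lemma itself via the explicit trial function built from $J_0(j_{1,1}r)$ with boundary-mean subtracted, combined with the numerical estimate showing that $\int_{\partial S(\alpha)}J_0(j_{1,1}r)\,ds>0$ all the way up to $\alpha=\alpha_0$. Given that input, the main (indeed only) subtlety left at the level of the present proposition is the observation that once $\alpha$ crosses above $\alpha_0$ the simplicity of $\mu_2$ excludes any even radial mode from its eigenspace, so anti-symmetry holds without qualification.
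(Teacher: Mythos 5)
Your proposal is correct and follows essentially the same route as the paper: take $\alpha\in(\alpha_0,\alpha_1)$, invoke the preceding lemma for $\kappa_1<\mu_2$, and use the separation-of-variables classification (for which the paper cites Lemma 5.1 of Laugesen--Siudeja) to conclude that for $\alpha>\alpha_0$ the eigenvalue $\mu_2(S(\alpha))=(j'_{\pi/\alpha,1})^2$ is simple with an odd eigenfunction. Your extra verification that the odd mode has zero boundary mean is harmless but not needed for the statement.
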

\begin{proof}
By \cite[Lemma 5.1]{LS10} and the lemma above, we conclude that when $\alpha \in (\alpha_0,\alpha_1)$, all Neumann Laplacian modes of $\mu_2$ are odd with respect to $x$-axis, while $\kappa_1\left(S(\alpha)\right)<\mu_2\left(S(\alpha)\right)$ when $\alpha$ belongs to this range.
\end{proof}

Finally, we leave the following conjecture open.
\begin{conjecture}
\label{3+}
Let $\Omega$ be a symmetric domain in $\mathbb{R}^2$ with a line of symmetry. Suppose that $\mu_2$ has only even modes. Then $\kappa_1<\mu_2$.
\end{conjecture}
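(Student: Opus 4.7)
The plan is to proceed by contradiction: assume $\kappa_1=\mu_2$. By Proposition \ref{byproduct1}, every Neumann eigenfunction $v$ of $\mu_2$ must satisfy $\int_{\partial\Omega}v\,d\sigma=0$. Equivalently (and also derivable directly), expanding the solution $u_c$ to \eqref{cequation} in an $L^2$-orthonormal Neumann eigenbasis $\{v_k\}$ with eigenvalues $\mu_k$ and integrating by parts gives
\begin{equation*}
c\int_{\partial\Omega}u_c\,d\sigma=\frac{P^2(\Omega)}{|\Omega|}+c\sum_{k\ge 2}\frac{\bigl(\int_{\partial\Omega}v_k\,d\sigma\bigr)^2}{c-\mu_k};
\end{equation*}
if any $\mu_2$-eigenfunction had nonzero boundary mean, the sum would diverge to $-\infty$ as $c\to\mu_2^-$, contradicting the positivity guaranteed by Theorem \ref{maintheorem}.

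Next I would choose coordinates so the line of symmetry is $L=\{y=0\}$, and set $\Omega^+=\Omega\cap\{y>0\}$, $\Gamma_0=\Omega\cap L$, $\Gamma_1=\partial\Omega\cap\overline{\{y\ge 0\}}$. The evenness hypothesis forces $\partial_y v=0$ on $\Gamma_0$ for every $\mu_2$-eigenfunction $v$, so $v|_{\Omega^+}$ is a first nontrivial Neumann eigenfunction of $\Omega^+$ with $\mu_2(\Omega)=\mu_2(\Omega^+)$; the absence of odd modes on $\Omega$ places this value strictly below the first mixed (Neumann on $\Gamma_1$, Dirichlet on $\Gamma_0$) eigenvalue of $\Omega^+$. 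The zero-boundary-mean condition reads $\int_{\Gamma_1}v\,d\sigma=0$ for every such $v$. By Theorem \ref{existence} there is now a solution $u_{\mu_2}$ to \eqref{cequation} at $c=\mu_2$, which we take even; on $\Omega^+$ it solves the mixed problem with Neumann data $-1$ on $\Gamma_1$ and $0$ on $\Gamma_0$.

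The target contradiction is to show that the necessary condition $\int_{\partial\Omega}u_{\mu_2}\,d\sigma>0$ (required by the converse in Proposition \ref{byproduct1} for $\kappa_1=\mu_2$) must fail. Expanding $u_{\mu_2}|_{\Omega^+}$ in the Neumann eigenbasis $\{\phi_k\}$ of $\Omega^+$ with eigenvalues $\nu_k$ yields the analogous Fredholm formula
\begin{equation*}
\int_{\Gamma_1}u_{\mu_2}\,d\sigma=\frac{|\Gamma_1|^2}{\mu_2\,|\Omega^+|}-\sum_{\nu_k>\mu_2}\frac{\bigl(\int_{\Gamma_1}\phi_k\,d\sigma\bigr)^2}{\nu_k-\mu_2},
\end{equation*}
where the would-be resonant term at $\nu_2=\mu_2$ is killed by $\int_{\Gamma_1}\phi_2=0$. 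I would then quantify the higher-order terms, using the spectral gap and a geometric estimate on the $L^2(\Gamma_1)$-norms of the $\phi_k$, to show that the negative sum dominates the positive boundary term; this would force $\int_{\partial\Omega}u_{\mu_2}\le 0$, the desired contradiction.

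The hard part will be establishing this sign bound, because the evenness hypothesis alone does not exclude cancellation forced by additional unsuspected symmetries of $\Omega$ (for instance a thin rectangle's perpendicular symmetry keeps the right-hand side of the above formula positive), so a genuinely general proof likely needs a ``no hidden symmetry'' input or a shape-perturbation step. For the motivating case of sub-equilateral isosceles triangles, where numerics in Section~8 support the strict inequality, I would parametrize the family by the aperture, combine the explicit equilateral-case computation of Section~6 with shape calculus to show that the sign of $\int_{\partial\Omega}u_{\mu_2}$ flips when the aperture drops below $\pi/3$, and then globalize by a monotonicity/connectedness argument throughout the sub-equilateral regime, thereby closing the contradiction in the regime where the conjecture is most natural.
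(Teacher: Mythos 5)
This statement is an open conjecture: the paper explicitly writes ``we leave the following conjecture open'' and offers no proof, so there is nothing to compare your argument against except the conjecture itself. Your proposal is a strategy outline rather than a proof, and the decisive step is missing. The reduction in your first paragraph is fine (if some even mode of $\mu_2$ has nonzero boundary mean, the eigenfunction expansion sends $c\int_{\partial\Omega}u_c\,d\sigma\to-\infty$ and Theorem \ref{maintheorem} gives $\kappa_1<\mu_2$), so the entire content of the conjecture sits in the case where all even modes have zero boundary mean and $u_{\mu_2}$ exists. There your plan is to prove $\int_{\partial\Omega}u_{\mu_2}\,d\sigma<0$ by showing the negative spectral sum on $\Omega^+$ dominates $|\Gamma_1|^2/(\mu_2|\Omega^+|)$, but you give no mechanism for this bound, and you concede yourself that it is ``the hard part.'' Your own rectangle observation shows why no such mechanism can exist at the stated level of generality: a non-square rectangle satisfies the hypothesis with respect to the symmetry axis perpendicular to its long side ($\mu_2$-modes are constant in that direction, hence even), yet $\kappa_1=\mu_2$ by Theorem \ref{zheng2} and Proposition \ref{byproduct1}. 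So either the conjecture must be read as restricted to domains with a unique axis of symmetry (or as requiring evenness with respect to every axis), or it is false as literally stated; in either case a purely spectral estimate on $\Omega^+$ of the type you propose cannot close the argument without a genuine additional geometric input, which is exactly what is absent.

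Two smaller logical points. First, the positivity $\int_{\partial\Omega}u_{\mu_2}\,d\sigma>0$ is not ``required'' by $\kappa_1=\mu_2$: the converse direction of Proposition \ref{byproduct1} is a sufficient condition, not a necessary one. What you can extract from $\kappa_1=\mu_2$ is that $c\int_{\partial\Omega}u_c\,d\sigma>0$ for all $c\in(0,\mu_2)$, hence $\lim_{c\to\mu_2^-}c\int_{\partial\Omega}u_c\,d\sigma\ge0$; to turn this into a contradiction you need both the identification of this limit with $\mu_2\int_{\partial\Omega}u_{\mu_2}\,d\sigma$ (which requires an argument in the resonant case, as in the proof of Proposition \ref{jin1}) and \emph{strict} negativity, whereas your text only claims $\le0$. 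Second, the final paragraph about sub-equilateral triangles is a research program (shape calculus plus a connectedness argument), not a proof, and even if completed it would only settle a one-parameter family, not the conjecture.
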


\section{Conflict of Interest}
On behalf of all authors, the corresponding author states that there is no conflict of interest.

\end{document}